\providecommand{\algorithmname}{Algorithm}
\DeclareRobustCommand{\lyxdeleted}[3]{{\color{lyxdeleted}\sout{#3}}}
\numberwithin{equation}{section}
\numberwithin{figure}{section}
\theoremstyle{plain}
\newtheorem{thm}{\protect\theoremname}[section]
  \theoremstyle{plain}
  \newtheorem{cor}[thm]{\protect\corollaryname}
  \theoremstyle{remark}
  \newtheorem{rem}[thm]{\protect\remarkname}
  \theoremstyle{plain}
  \newtheorem{lem}[thm]{\protect\lemmaname}
  \providecommand{\corollaryname}{Corollary}
  \providecommand{\lemmaname}{Lemma}
  \providecommand{\remarkname}{Remark}
\providecommand{\theoremname}{Theorem}
\begin{document}

\title[On computing distributions of products of random variables]{On computing distributions of products of random variables via Gaussian
multiresolution analysis  }

\author{Gregory Beylkin, Lucas Monz\'{o}n and Ignas Satkauskas }

\address{Department of Applied Mathematics \\
 University of Colorado at Boulder \\
 UCB 526 \\
 Boulder, CO 80309-0526 }
\begin{abstract}
We introduce a new approximate multiresolution analysis (MRA) using
a single Gaussian as the scaling function, which we call Gaussian
MRA (GMRA). As an initial application, we employ this new tool to
accurately and efficiently compute the probability density function
(PDF) of the product of independent random variables. In contrast
with Monte-Carlo (MC) type methods (the only other universal approach
known to address this problem), our method not only achieves accuracies
beyond the reach of MC but also produces a PDF expressed as a Gaussian
mixture, thus allowing for further efficient computations. We also
show that an exact MRA corresponding to our GMRA can be constructed
for a matching user-selected accuracy.
\end{abstract}

\keywords{multiresolution analysis, product of independent random variables,
probability density function}

\thanks{This research was partially supported by NSF grant DMS-1320919.}

\maketitle

\section{Introduction}

While the probability density function (PDF) of the sum of two independent
random variables is easily described as the convolution of their PDFs,
the expression for the PDF of the product is significantly more complicated.
Given two independent random variables $X$ and $Y$ with PDFs $f$
and $g$, where $\int_{\mathbb{R}}f(x)dx=1$ and $\int_{\mathbb{R}}g(y)dy=1$,
the PDF $p$ of their product, $Z=XY$, can be succinctly written
as 
\begin{equation}
p\left(t\right)=\int_{\mathbb{R}}\int_{\mathbb{R}}f\left(x\right)g\left(y\right)\delta\left(xy-t\right)dxdy,\label{eq:integral-to-compute-1}
\end{equation}
where $\delta$ is the delta function defined as 
\[
\int_{-\infty}^{\infty}\delta\left(t-t'\right)u\left(t'\right)dt'=u\left(t\right)
\]
for a class of test functions $u$.When written in a more traditional
form,
\[
p\left(t\right)=\int_{\mathbb{R}}f\left(x\right)g\left(t/x\right)\frac{1}{\left|x\right|}dx,
\]
the difficulty of computing $p$ becomes apparent since the integral
in \eqref{eq:integral-to-compute-1} diverges at $t=0$ as long as
e.g. $g\left(0\right)\ne0$ and $f\left(0\right)\ne0$. Indeed, in
the simplest case when independent random variables are selected from
two normal distributions with zero means, the PDF $p$ has a logarithmic
singularity at $t=0$ (see \eqref{eq:K_0 distribution} below).

As far as we know, the only universal method currently available for
computing the PDF of the product of two independent random variables
relies on a Monte-Carlo approach, where one samples the individual
PDFs, computes the products, and collects enough samples to achieve
certain accuracy in the computation of $p$ in \eqref{eq:integral-to-compute-1}.
However, due to the slow convergence of such method (typically $1/\sqrt{N}$,
where $N$ is the number of samples) achieving high accuracy is not
feasible.

In this paper we show how to accurately and efficiently compute the
PDF $p$ in \eqref{eq:integral-to-compute-1} using an approximate
multiresolution analysis (MRA), where the scaling function is a Gaussian.
We demonstrate that, in contrast with the sampling method, our new
algorithm is both fast and accurate. Moreover, while a sampling method
only provides a histogram of $p$, we directly obtain the result in
a functional form that can be used in further computations. Thus,
we avoid the need for e.g. kernel density estimation of the result
of sampling the product PDF (see e.g. \cite{EFROMO:2008,SIMONO:2012}
and references therein). In particular, we do not have any obstacles
dealing with heavy-tailed distributions (see an example with the Cauchy
distribution in Section~\ref{sub:ExampleCauchy}). 

A representation of a function in Gaussian-based MRA (GMRA) can also
be viewed as a multiresolution Gaussian mixture. Approximations by
Gaussian mixtures have been used early on, see e.g.~\cite{SOR-ALS:1971}.
We prove that the integral (\ref{eq:integral-to-compute-1}) can be
efficiently evaluated as a multiresolution Gaussian mixture and provide
a tight estimate for the accuracy of the result. 

Our approach to constructing a GMRA is based on the observation that,
for any finite user-selected accuracy $\epsilon$, the function $\phi\left(x\right)=\sqrt{\frac{\alpha}{\pi}}e^{-\alpha x^{2}}$
satisfies the approximate two-scale relation 
\[
\left|\phi\left(x\right)-\sqrt{\frac{4\alpha}{3\pi}}\sum_{k\in\mathbb{Z}}e^{-\frac{\alpha}{3}k^{2}}\phi\left(2x-k\right)\right|\le\epsilon\phi\left(x\right)
\]
for an appropriately selected parameter $\alpha=\alpha\left(\epsilon\right)$.
As a consequence, the scaled and shifted functions 
\[
\left\{ \phi_{j,k}\left(x\right)=2^{j/2}\phi\left(2^{j}x-k\right)=2^{j/2}\sqrt{\frac{\alpha}{\pi}}e^{-\alpha\left(2^{j}x-k\right)^{2}}\right\} _{j,k\in\mathbb{Z}}
\]
form a Gaussian-based approximate MRA. We use this approximate basis
to represent PDFs of random variables which we assume to be smooth
functions except at a finite number of points where they may have
integrable singularities. Under this assumption these PDFs can be
represented by a reasonable number of basis functions. The approximate
nature of the GMRA does not limit its applicability since any finite
accuracy can be selected. For example, in our computations we roughly
match the double precision arithmetic of a typical computer by selecting
$\epsilon$ in the range $10^{-13}-10^{-16}$.

The integral (\ref{eq:integral-to-compute-1}) can, in principle,
be computed using the Mellin transform. It has been observed that,
for non-negative random variables, the Mellin transform of $p$ in
(\ref{eq:integral-to-compute-1}) is equal to the product of the Mellin
transforms of $f$ and $g$ (equivalently, $p$ is the so-called Mellin
convolution of $f$ and $g$). In e.g.~\cite{SPRING:1979}, this
approach has been extended to random variables taking both positive
and negative values. While analytically appealing, a numerical implementation
of Mellin convolution is problematic and has not resulted in a reliable
numerical method. 

Instead of using the Mellin transform to represent the PDFs of non-negative
random variables, we can use linear combinations of decaying exponentials
with complex exponents constructed via algorithms developed in \cite{BEY-MON:2005,HAU-BEY:2012}.
Such representation of PDFs of non-negative random variables leads
to a universal method for computing their product. We plan to address
such approach in a separate paper. However, for random variables that
can take both positive and negative values, it is more natural to
use multiresolution Gaussian mixtures as described in this paper.
Moreover, a generalization to higher dimensions becomes problematic
if one were to split the treatment of positive and negative values
of such random variables.

Computing the PDF of the product of independent random variables using
GMRA is just the first of several interesting applications of this
approximate basis. Although the idea of using Gaussians to approximate
arbitrary functions appeared earlier in \cite{LA-MA-SC:2004,MAZ-SCH:2007},
where the coefficients of the shifted Gaussians were associated with
function values it turns out that much more accurate approximations
can be achieved if the coefficients are computed differently, as described
later in this paper. Since many classical operators of mathematical
physics can be accurately approximated via a linear combination of
Gaussians (see e.g. \cite{H-F-Y-G-B:2004,BE-CH-PE:2008,BEY-MON:2010,B-F-H-K-M:2012}),
using GMRA to apply these operators can be reduced to the analytic
evaluation of integrals, and hence, suggests a new class of fast multiresolution
algorithms. However, constructing such algorithms is not straightforward
since integer translates of Gaussians are far from being orthogonal
(after all, they are positive functions so that no cancellations are
possible). Therefore, the use of GMRA differs from the usual MRA with
orthogonal or nearly orthogonal scaling functions and we briefly discuss
some of these issues in Appendix B.

We present our basis construction for functions of one variable and
note that the generalization to two and three variables is fairly
straightforward and we will describe it in appropriate contexts. Extending
our approach to bases for multivariate functions in higher dimensions
is more complicated and we plan to address it later. 

Finally, we note that the quotient of two independent random variables
can be evaluated in the same fashion as the product. Since the PDF
of the sums (or differences) of two real valued random variables is
obtained by the convolution of their PDFs, the representation provided
by GMRA yields a numerical calculus of PDFs. In particular, the expectation
\[
\mbox{E}\left[u\left(Z\right)\right]=\int u\left(t\right)p_{Z}\left(t\right)dt,
\]
where $u$ is a given function and $p_{Z}$ is the PDF of random variable
$Z$ represented in GMRA, can be easily evaluated (see e.g. Remarks~\ref{rem: moments remark}
and \ref{rem:expectation}).

We start by proving a key approximation result and considering its
consequences in Section~\ref{sec:Approximation-of-shifted}. In Section~\ref{sec:Distribution-of-the}
we prove a relative error estimate for the representation of the PDF
of the product via GMRA when one of the distributions is Gaussian.
We also present two approaches to approximating PDFs via a linear
combination of Gaussians as well as demonstrate that the product of
two random variables with a joint bivariate normal distribution can
also be handled by our approach. We then turn to numerical examples
in Section~\ref{sec:Numerical-Examples}. Conclusions and further
work is discussed in Section~\ref{sec:Conclusions-and-further}.
In Appendix~A we include a pseudo-code for adaptive integration used
in Section~\ref{sec:Distribution-of-the}. Finally, we present a
brief discussion of certain features of GMRA in Appendix B.

\section{\label{sec:Approximation-of-shifted}Approximation of shifted Gaussians
and approximate multiresolution analysis}

We start by proving a key estimate (Theorem~\ref{thm: projection of a Gaussian}
below) that allows us to replace any shifted Gaussian by an explicit
linear combination of Gaussians (with a larger, fixed exponent) shifted
at integer values. This estimate yields an approximate two-scale relation
for Gaussians that makes possible to construct an approximate multiresolution
basis with Gaussians as the scaling functions. Since the basis of
shifted Gaussians is far from being orthogonal, one would expect difficulties
in computing projections of functions on such basis. Remarkably, Theorem~\ref{thm: projection of a Gaussian}
and Theorem~\ref{thm:The-Gaussian-on arbitrary scale} allow us to
compute such projections accurately for any Gaussian with an arbitrary
exponent and shift. Since we are interested in applications where
all functions and the results of computing integrals can be approximated
via linear combinations of Gaussians, the following result is the
key to using GMRA as a numerical tool. 
\begin{thm}
\label{thm: projection of a Gaussian}If $0<\beta<\alpha$ then, for
$x,s\in\mathbb{R}$, we have

\begin{eqnarray}
\label{eq:key approximation}\\
\left|e^{-\beta\left(x-s\right)^{2}}-\frac{\alpha}{\sqrt{\pi\left(\alpha-\beta\right)}}\sum_{k\in\mathbb{Z}}e^{-\frac{\alpha\beta}{\alpha-\beta}\left(k-s\right)^{2}}e^{-\alpha\left(x-k\right)^{2}}\right| & \le & \epsilon\left(\alpha,\beta\right)e^{-\beta\left(x-s\right)^{2}},\nonumber 
\end{eqnarray}
where
\[
\epsilon\left(\alpha,\beta\right)=\vartheta_{3}\left(0,e^{-\frac{\pi^{2}}{\alpha}\left(1-\frac{\beta}{\alpha}\right)}\right)-1
\]
and 
\[
\vartheta_{3}\left(z,q\right)=\sum_{n\in\mathbb{Z}}q^{n^{2}}e^{2inz}
\]
is the Jacobi theta function. \end{thm}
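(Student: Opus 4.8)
The plan is to recognize the sum on the left-hand side as a sampled Gaussian and evaluate it in closed form via Poisson summation. First I would merge the two Gaussian factors inside the sum into a single exponential and complete the square in the summation index $k$, treated momentarily as a continuous variable. Writing $\gamma=\frac{\alpha\beta}{\alpha-\beta}$, the exponent $-\gamma(k-s)^{2}-\alpha(x-k)^{2}$ is a downward quadratic in $k$ with leading coefficient $-(\gamma+\alpha)=-\frac{\alpha^{2}}{\alpha-\beta}$ and vertex $k_{0}=x-\frac{\beta}{\alpha}(x-s)$. The decisive algebraic fact — and the step on which everything hinges — is that the part of the exponent \emph{independent} of $k$ collapses exactly to $-\beta(x-s)^{2}$; this is not a coincidence but the precise reason the coefficient $\frac{\alpha\beta}{\alpha-\beta}$ was chosen. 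I would verify it by matching the $s^{2}$, $sx$, and $x^{2}$ coefficients of $\frac{(\gamma s+\alpha x)^{2}}{\gamma+\alpha}-\gamma s^{2}-\alpha x^{2}$ against $-\beta(x-s)^{2}$, each reducing through the single identity $\frac{\gamma\alpha}{\gamma+\alpha}=\beta$.

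Setting $a=\frac{\alpha^{2}}{\alpha-\beta}$, this rewrites the sum as
\[
\frac{\alpha}{\sqrt{\pi(\alpha-\beta)}}\,e^{-\beta(x-s)^{2}}\sum_{k\in\mathbb{Z}}e^{-a(k-k_{0})^{2}}.
\]
Next I would apply the Poisson summation formula to $\sum_{k}e^{-a(k-k_{0})^{2}}$. Since the Fourier transform of $e^{-a(\,\cdot\,-k_{0})^{2}}$ at integer frequency $n$ equals $\sqrt{\pi/a}\,e^{-\pi^{2}n^{2}/a}e^{-2\pi ink_{0}}$, Poisson summation gives
\[
\sum_{k\in\mathbb{Z}}e^{-a(k-k_{0})^{2}}=\sqrt{\pi/a}\sum_{n\in\mathbb{Z}}e^{-\pi^{2}n^{2}/a}\,e^{-2\pi ink_{0}}.
\]
Here $\sqrt{\pi/a}=\frac{\sqrt{\pi(\alpha-\beta)}}{\alpha}$, which cancels the prefactor exactly, and $e^{-\pi^{2}/a}=e^{-\frac{\pi^{2}}{\alpha}(1-\beta/\alpha)}=q$ is precisely the nome appearing in $\epsilon(\alpha,\beta)$. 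Recognizing $\sum_{n}q^{n^{2}}e^{-2\pi ink_{0}}=\vartheta_{3}(-\pi k_{0},q)$ under the stated definition, the entire sum equals $e^{-\beta(x-s)^{2}}\vartheta_{3}(-\pi k_{0},q)$, so the quantity to be bounded is $e^{-\beta(x-s)^{2}}\left|1-\vartheta_{3}(-\pi k_{0},q)\right|$.

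Finally I would close the estimate with a one-line bound on the theta function. Writing $\vartheta_{3}(z,q)-1=2\sum_{n\ge1}q^{n^{2}}\cos(2nz)$, and noting that $q=e^{-\pi^{2}/a}\in(0,1)$ with every $q^{n^{2}}>0$, while $k_{0}$ is real so that $z=-\pi k_{0}$ is real, the triangle inequality together with $|\cos|\le1$ yields $\left|1-\vartheta_{3}(-\pi k_{0},q)\right|\le2\sum_{n\ge1}q^{n^{2}}=\vartheta_{3}(0,q)-1=\epsilon(\alpha,\beta)$. Multiplying by $e^{-\beta(x-s)^{2}}$ gives the claimed inequality. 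I expect the only genuinely delicate point to be the completion-of-the-square identity in the first paragraph; once the $k$-independent part is confirmed to be $-\beta(x-s)^{2}$, the Poisson step and the theta-function bound are routine, and the constant $\epsilon(\alpha,\beta)$ is tight precisely because $\vartheta_{3}(\,\cdot\,,q)$ attains its extreme deviation from $1$ when all the cosines align at $z\equiv0$.
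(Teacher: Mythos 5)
Your proof is correct and is essentially the paper's argument: both hinge on Poisson summation applied to $t\mapsto e^{-\frac{\alpha\beta}{\alpha-\beta}(t-s)^{2}}e^{-\alpha(x-t)^{2}}$, with the $n=0$ Fourier mode producing $e^{-\beta(x-s)^{2}}$ exactly and the $n\neq0$ modes bounded termwise by $q^{n^{2}}$ with $q=e^{-\frac{\pi^{2}}{\alpha}(1-\beta/\alpha)}$. Your completion of the square is the same computation the paper performs when it writes the exact convolution identity and evaluates $\widehat{F}(n,s,x)$, just packaged as a closed-form theta-function evaluation of the sum rather than as an integral-versus-Riemann-sum comparison.
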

\begin{proof}
We have
\[
e^{-\beta\left(x-s\right)^{2}}=\frac{\alpha}{\sqrt{\pi\left(\alpha-\beta\right)}}\int_{\mathbb{R}}e^{-\frac{\alpha\beta}{\alpha-\beta}\left(t-s\right)^{2}}e^{-\alpha\left(x-t\right)^{2}}dt,
\]
and need to estimate
\[
\int_{\mathbb{R}}F\left(t,s,x\right)dt-\sum_{k\in\mathbb{Z}}F\left(k,s,x\right),
\]
where
\[
F\left(t,s,x\right)=\frac{\alpha}{\sqrt{\pi\left(\alpha-\beta\right)}}e^{-\frac{\alpha\beta}{\alpha-\beta}\left(t-s\right)^{2}}e^{-\alpha\left(x-t\right)^{2}}.
\]
Using Poisson\textquoteright s summation formula, we have
\begin{equation}
\left|\int_{\mathbb{R}}F\left(t,s,x\right)dt-\sum_{k\in\mathbb{Z}}F\left(k,s,x\right)\right|\le\sum_{n=\pm1,\pm2,\dots}\left|\widehat{F}\left(n,s,x\right)\right|,\label{eq:estimate from revisited}
\end{equation}
where
\[
\widehat{F}\left(n,s,x\right)=\int_{\mathbb{R}}F\left(t,s,x\right)e^{-2\pi itn}dt=e^{-n^{2}\frac{\pi^{2}}{\alpha}\left(1-\frac{\beta}{\alpha}\right)}e^{-\beta\left(x-s\right)^{2}}e^{-2\pi in\frac{\beta}{\alpha}\left(s-x\right)-2\pi inx}.
\]
We obtain 
\begin{eqnarray*}
\sum_{n=\pm1,\pm2,\dots}\left|\widehat{F}\left(n,s,x\right)\right| & = & \left(\sum_{n=\pm1,\pm2,\dots}e^{-n^{2}\frac{\pi^{2}}{\alpha}\left(1-\frac{\beta}{\alpha}\right)}\right)e^{-\beta\left(x-s\right)^{2}}\\
 & \le & \left(\vartheta_{3}\left(0,e^{-\frac{\pi^{2}}{\alpha}\left(1-\frac{\beta}{\alpha}\right)}\right)-1\right)e^{-\beta\left(x-s\right)^{2}}
\end{eqnarray*}
yielding the estimate \eqref{eq:key approximation}. For similar type
of analysis see \cite[Section 2, eq.5]{BEY-MON:2010}, where we use
the step size to control the accuracy of approximating integrals by
a sum, whereas here we fix the step size but select the exponent of
the Gaussian $\phi$ to achieve the desired accuracy.\end{proof}
\begin{cor}
\label{cor:app_two_scale_eq}Defining $\phi\left(x\right)=\sqrt{\frac{\alpha}{\pi}}e^{-\alpha x^{2}}$
and assuming that $0<\beta\le\alpha/4$, we obtain from \eqref{eq:key approximation}

\begin{equation}
\left|e^{-\beta\left(x-s\right)^{2}}-\sqrt{\frac{\alpha}{\alpha-\beta}}\sum_{k\in\mathbb{Z}}e^{-\frac{\alpha\beta}{\alpha-\beta}\left(k-s\right)^{2}}\phi\left(x-k\right)\right|\le\epsilon e^{-\beta\left(x-s\right)^{2}},\label{eq:key approximation-1}
\end{equation}
where 
\begin{equation}
\epsilon=\vartheta_{3}\left(0,e^{-\frac{3\pi^{2}}{4\alpha}}\right)-1.\label{eq:parameter epsilon}
\end{equation}
Also, setting $s=0$, $\beta=\alpha/4$ and replacing $x$ by $2x$
in \eqref{eq:key approximation}, we obtain
\begin{equation}
\left|\phi\left(x\right)-\sqrt{\frac{4\alpha}{3\pi}}\sum_{k\in\mathbb{Z}}e^{-\frac{\alpha}{3}k^{2}}\phi\left(2x-k\right)\right|\le\epsilon\phi\left(x\right),\label{eq:approx_two_scale}
\end{equation}
an approximate two-scale relation for Gaussians. 
\end{cor}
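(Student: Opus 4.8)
The plan is to derive both estimates directly from Theorem~\ref{thm: projection of a Gaussian}, since \eqref{eq:key approximation-1} and \eqref{eq:approx_two_scale} are obtained from \eqref{eq:key approximation} by inserting the normalization $\phi\left(x\right)=\sqrt{\alpha/\pi}\,e^{-\alpha x^{2}}$ together with two elementary manipulations: a rescaling of the argument and a uniform scaling of the entire inequality. The only analytic input beyond bookkeeping is the monotonicity of $q\mapsto\vartheta_{3}\left(0,q\right)$ on $\left[0,1\right)$, which is immediate from the series $\vartheta_{3}\left(0,q\right)=\sum_{n\in\mathbb{Z}}q^{n^{2}}$, all of whose coefficients are nonnegative.

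For \eqref{eq:key approximation-1}, I would first replace $e^{-\alpha\left(x-k\right)^{2}}$ by $\sqrt{\pi/\alpha}\,\phi\left(x-k\right)$ inside the sum in \eqref{eq:key approximation} and check that the prefactor collapses, $\frac{\alpha}{\sqrt{\pi\left(\alpha-\beta\right)}}\cdot\sqrt{\pi/\alpha}=\sqrt{\alpha/\left(\alpha-\beta\right)}$, producing the coefficient displayed in the corollary. To pin down the constant, I would use the hypothesis $\beta\le\alpha/4$: it gives $1-\beta/\alpha\ge 3/4$, hence $e^{-\frac{\pi^{2}}{\alpha}\left(1-\beta/\alpha\right)}\le e^{-\frac{3\pi^{2}}{4\alpha}}$, and by the monotonicity noted above $\epsilon\left(\alpha,\beta\right)=\vartheta_{3}\left(0,e^{-\frac{\pi^{2}}{\alpha}\left(1-\beta/\alpha\right)}\right)-1\le\vartheta_{3}\left(0,e^{-\frac{3\pi^{2}}{4\alpha}}\right)-1=\epsilon$, the uniform constant in \eqref{eq:parameter epsilon}. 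Since the right-hand side of \eqref{eq:key approximation} is $\epsilon\left(\alpha,\beta\right)e^{-\beta\left(x-s\right)^{2}}$, replacing $\epsilon\left(\alpha,\beta\right)$ by the larger $\epsilon$ preserves the bound.

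For the two-scale relation \eqref{eq:approx_two_scale}, I would set $s=0$ and $\beta=\alpha/4$ in \eqref{eq:key approximation} and simplify the two $\beta$-dependent quantities: the exponent becomes $\frac{\alpha\beta}{\alpha-\beta}=\frac{\alpha}{3}$ and the prefactor becomes $\frac{\alpha}{\sqrt{\pi\left(\alpha-\beta\right)}}=\frac{2\sqrt{\alpha}}{\sqrt{3\pi}}$, while the constant is $\vartheta_{3}\left(0,e^{-\frac{3\pi^{2}}{4\alpha}}\right)-1=\epsilon$ exactly (no inequality is needed here, only the value $\beta/\alpha=1/4$). Replacing $x$ by $2x$ turns the Gaussian $e^{-\frac{\alpha}{4}x^{2}}$ on the left into $e^{-\alpha x^{2}}$ and the summands into $e^{-\alpha\left(2x-k\right)^{2}}$. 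Finally, multiplying the whole inequality through by $\sqrt{\alpha/\pi}$ converts $e^{-\alpha x^{2}}$ into $\phi\left(x\right)$, each $e^{-\alpha\left(2x-k\right)^{2}}$ into $\phi\left(2x-k\right)$, and recombines the prefactors into $\sqrt{\alpha/\pi}\cdot\frac{2\sqrt{\alpha}}{\sqrt{3\pi}}\cdot\sqrt{\pi/\alpha}=\sqrt{4\alpha/\left(3\pi\right)}$, yielding \eqref{eq:approx_two_scale}.

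There is no genuine obstacle here; the content lies entirely in Theorem~\ref{thm: projection of a Gaussian}, and the corollary is a matter of careful bookkeeping. The only points demanding attention are (i) verifying that the several prefactor products collapse exactly to the stated closed forms, and (ii) ensuring that the rescaling $x\mapsto 2x$ and the uniform multiplication by $\sqrt{\alpha/\pi}$ preserve the \emph{relative} (multiplicative) form of the error bound, which they do, since both operations act identically on the left-hand difference and on the dominating Gaussian on the right.
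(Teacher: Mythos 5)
Your proposal is correct and follows exactly the route the paper intends: the paper gives no separate proof of the corollary, treating it as an immediate specialization of Theorem~\ref{thm: projection of a Gaussian}, and your bookkeeping (the prefactor collapse to $\sqrt{\alpha/(\alpha-\beta)}$, the monotonicity of $q\mapsto\vartheta_{3}(0,q)$ to pass from $\epsilon(\alpha,\beta)$ to the uniform $\epsilon$, and the $s=0$, $\beta=\alpha/4$, $x\mapsto 2x$ substitution giving $\sqrt{4\alpha/(3\pi)}$) checks out in every detail.
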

Consequently, we can use the function $\phi$ as a scaling function
for an approximate multiresolution basis. By selecting an appropriate
$\alpha$ we can reduce the value of $\epsilon$ in \eqref{eq:parameter epsilon}
and achieve arbitrary finite precision in \eqref{eq:key approximation}.
In the examples presented later in the paper, we use $\alpha=1/4$
so that $\epsilon\approx2.77\cdot10^{-13}$ (see Figures~\ref{fig:example0101-2}
and \ref{fig:example010101-2}). We note that selecting $\alpha=1/5$
yields $\epsilon\approx2.22\cdot10^{-16}$ achieving double precision
accuracy. 
\begin{rem}
If $\beta=0$ in \eqref{eq:key approximation}, we recover the approximation
described in \cite{MAZ-SCH:2007}. However, if $\beta\ne0$, our approximation
\eqref{eq:key approximation} is substantially different and significantly
more accurate than that obtained in \cite{MAZ-SCH:2007}. The key
difference, of course, is in the approach to the selection of coefficients
of the shifted Gaussians.
\end{rem}
Both, in Theorem~\ref{thm: projection of a Gaussian} and Corollary~\ref{cor:app_two_scale_eq},
the error is described in terms of the quantity $\vartheta_{3}\left(0,e^{-\gamma}\right)-1$
for some appropriate $\gamma>0$. In the next lemma we show that,
as a function of $\gamma$, $\vartheta_{3}\left(0,e^{-\gamma}\right)-1$
decreases exponentially fast. Thus, slightly increasing the value
of $\gamma$ provides a much improved error. Also, using the lemma,
for a given $\alpha$ and target accuracy $\epsilon,$ we can accurately
estimate the maximum value of $\beta$ to use in Corollary~\ref{cor:app_two_scale_eq}.
\begin{lem}
\label{lem:Estimate_Theta_at_zero} 

For $\gamma\geq\gamma_{0}>0$, 
\[
\vartheta_{3}\left(0,e^{-\gamma}\right)-1\leq e^{\gamma_{0}}\left(\vartheta_{3}\left(0,e^{-\gamma_{0}}\right)-1\right)\thinspace e^{-\gamma}.
\]
In particular, for $\gamma_{0}=\pi$, we have
\begin{equation}
\vartheta_{3}\left(0,e^{-\gamma}\right)-1\leq c_{\pi}\thinspace e^{-\gamma},\ \mbox{for}\ \gamma\geq\pi,\label{ineq_v0-1}
\end{equation}
and
\begin{equation}
\sqrt{\frac{\pi}{\gamma}}<\vartheta_{3}\left(0,e^{-\gamma}\right)\leq\sqrt{\frac{\pi}{\gamma}}\left(1+c_{\pi}\thinspace e^{-\frac{\pi^{2}}{\gamma}}\right),\ \mbox{for}\ \gamma\leq\pi,\label{ineq_v0}
\end{equation}
where
\begin{equation}
c_{\pi}=e^{\pi}\left(\frac{\sqrt[4]{\pi}}{\Gamma\left(\frac{3}{4}\right)}-1\right)<2.002.\label{constant_c_pi}
\end{equation}
\end{lem}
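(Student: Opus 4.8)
The plan is to reduce every assertion to the tail sum $\vartheta_{3}\left(0,e^{-\gamma}\right)-1=2\sum_{n\geq1}e^{-\gamma n^{2}}$, which is immediate from the definition of $\vartheta_{3}$. For the general inequality I would argue termwise: since $n^{2}\geq1$ for $n\geq1$ and $\gamma\geq\gamma_{0}$, the exponent satisfies $\left(\gamma-\gamma_{0}\right)n^{2}\geq\gamma-\gamma_{0}\geq0$, so
\[
e^{-\gamma n^{2}}=e^{-\gamma_{0}n^{2}}\,e^{-\left(\gamma-\gamma_{0}\right)n^{2}}\leq e^{-\gamma_{0}n^{2}}\,e^{-\left(\gamma-\gamma_{0}\right)}.
\]
Summing over $n\geq1$ and multiplying by $2$ yields $\vartheta_{3}\left(0,e^{-\gamma}\right)-1\leq e^{\gamma_{0}}\bigl(\vartheta_{3}\left(0,e^{-\gamma_{0}}\right)-1\bigr)e^{-\gamma}$, the first claim; the factor of $2$ cancels on both sides and no delicate estimate is needed here.

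For the specialization at $\gamma_{0}=\pi$ giving \eqref{ineq_v0-1}, I would insert the classical closed-form value
\[
\vartheta_{3}\left(0,e^{-\pi}\right)=\sum_{n\in\mathbb{Z}}e^{-\pi n^{2}}=\frac{\pi^{1/4}}{\Gamma\left(3/4\right)},
\]
so that $c_{\pi}=e^{\pi}\bigl(\vartheta_{3}\left(0,e^{-\pi}\right)-1\bigr)$ reproduces \eqref{constant_c_pi}. The bound $c_{\pi}<2.002$ is then a direct numerical check using $\Gamma\left(3/4\right)\approx1.2254$ and $\pi^{1/4}\approx1.3313$, which give $\vartheta_{3}\left(0,e^{-\pi}\right)\approx1.08644$ and hence $c_{\pi}\approx2.00$.

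For the regime $\gamma\leq\pi$ in \eqref{ineq_v0} I would invoke the Jacobi imaginary (modular) transformation, obtained by applying Poisson summation to $\sum_{n}e^{-\gamma n^{2}}$:
\[
\vartheta_{3}\left(0,e^{-\gamma}\right)=\sqrt{\frac{\pi}{\gamma}}\,\vartheta_{3}\!\left(0,e^{-\pi^{2}/\gamma}\right).
\]
The lower bound is immediate, since $\vartheta_{3}\left(0,e^{-\pi^{2}/\gamma}\right)=1+2\sum_{n\geq1}e^{-\pi^{2}n^{2}/\gamma}>1$. For the upper bound I would observe that $\gamma\leq\pi$ forces $\pi^{2}/\gamma\geq\pi$, so the already-established inequality \eqref{ineq_v0-1} applies with argument $\pi^{2}/\gamma$, giving $\vartheta_{3}\left(0,e^{-\pi^{2}/\gamma}\right)-1\leq c_{\pi}e^{-\pi^{2}/\gamma}$; substituting into the transformation yields the claimed upper bound.

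The structure is self-bootstrapping: the general termwise estimate, specialized at $\gamma_{0}=\pi$, feeds directly into the $\gamma\leq\pi$ case through the functional equation. Consequently the only genuinely non-elementary ingredients are the two classical facts—the closed-form evaluation at $q=e^{-\pi}$ and the modular transformation—and the main work lies in correctly identifying and invoking these rather than in any sharp analytic estimate. I therefore expect no substantive obstacle beyond bookkeeping the constant $c_{\pi}$ consistently across the three statements.
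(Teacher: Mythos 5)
Your proposal is correct and follows essentially the same route as the paper: your termwise bound $e^{-\gamma n^{2}}\leq e^{-\gamma_{0}n^{2}}e^{-(\gamma-\gamma_{0})}$ is exactly the statement that $e^{\gamma}\left(\vartheta_{3}\left(0,e^{-\gamma}\right)-1\right)=2\sum_{n\geq1}e^{-\gamma\left(n^{2}-1\right)}$ is non-increasing in $\gamma$, which is how the paper phrases it, and the remaining two steps (the closed-form value at $q=e^{-\pi}$ and the modular transformation applied with $\pi^{2}/\gamma\geq\pi$) coincide with the paper's argument. No gaps.
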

\begin{proof}
Consider 
\[
f\left(\gamma\right)=e^{\gamma}\left(\vartheta_{3}\left(0,e^{-\gamma}\right)-1\right)=2\sum_{n\geq1}e^{-\gamma\left(n^{2}-1\right)}.
\]
Since $f'\left(\gamma\right)<0$, $f$ is a decreasing function which
implies the first claim. For \eqref{ineq_v0-1}, use \cite[page 325]{BERNDT:1998}
to write $f\left(\pi\right)=e^{\pi}\left(\vartheta_{3}\left(0,e^{-\pi}\right)-1\right)=e^{\pi}\left(\frac{\sqrt[4]{\pi}}{\Gamma\left(\frac{3}{4}\right)}-1\right)<2.002$.
For \eqref{ineq_v0}, note that 
\begin{equation}
\vartheta_{3}\left(0,e^{-\gamma}\right)=\sqrt{\frac{\pi}{\gamma}}\vartheta_{3}\left(0,e^{-\frac{\pi^{2}}{\gamma}}\right)=\sqrt{\frac{\pi}{\gamma}}\sum_{n\in\mathbb{Z}}e^{-\frac{\pi^{2}}{\gamma}n^{2}}>\sqrt{\frac{\pi}{\gamma}}.\label{theta_at_0_using_reciprocal}
\end{equation}
Since for $\gamma\leq\pi$ we have $\frac{\pi^{2}}{\gamma}\geq\pi$,
we obtain the result using \eqref{ineq_v0-1} with $\gamma$ replaced
by $\frac{\pi^{2}}{\gamma}$.
\end{proof}

\subsection{Multiresolution analysis with a Gaussian as the scaling function}

We consider the family of functions 
\begin{equation}
\left\{ \phi_{j,k}\left(x\right)=2^{j/2}\phi\left(2^{j}x-k\right)=2^{j/2}\sqrt{\frac{\alpha}{\pi}}e^{-\alpha\left(2^{j}x-k\right)^{2}}\right\} _{j,k\in\mathbb{Z}}\label{eq:Gaussian multiresolution basis}
\end{equation}
normalized so that the $L^{2}$-norm 
\[
\left\Vert \phi_{j,k}\right\Vert _{2}=\left(\frac{\alpha}{2\pi}\right)^{1/4}
\]
does not depend on the scale $j$ (\textit{n.b}., setting the norm
to $1$ would have resulted in bulkier formulas). In the next theorem,
we show that a Gaussian with an arbitrary exponent can be projected
(approximated) at an appropriate scale of GMRA. It is remarkable that
the projection coefficients are obtained explicitly without solving
any equations.
\begin{thm}
\label{thm:The-Gaussian-on arbitrary scale}The Gaussian $e^{-\beta\left(x-s\right)^{2}}$
with exponent $\beta$ and shift $s$\textup{ can be approximated
in the GMRA }\eqref{eq:Gaussian multiresolution basis}\textup{ as
}
\begin{equation}
\left|e^{-\beta\left(x-s\right)^{2}}-\sum_{k\in\mathbb{Z}}g_{k}^{j}\phi_{jk}\left(x\right)\right|\le\epsilon e^{-\beta\left(x-s\right)^{2}},\label{eq:expansion on scale j}
\end{equation}
where \textup{$4^{j-2}\alpha<\beta\le4^{j-1}\alpha$ and} 
\begin{equation}
g_{k}^{j}=2^{-j/2}\sqrt{\frac{\alpha}{\pi\left(\alpha-4^{-j}\beta\right)}}e^{-\frac{\alpha\beta}{\alpha-4^{-j}\beta}\left(2^{-j}k-s\right)^{2}}.\label{eq:coefficients on scale j}
\end{equation}
\end{thm}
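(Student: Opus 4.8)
The plan is to deduce this arbitrary-scale statement directly from the unit-scale estimate of Theorem~\ref{thm: projection of a Gaussian} by a single change of variables, exploiting the fact that the bound \eqref{eq:expansion on scale j} is \emph{relative} (the error is controlled by a multiple of $e^{-\beta(x-s)^{2}}$ itself) and hence invariant under dilation.

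First I would pass to the rescaled variable $u=2^{j}x$, writing the target as $e^{-\beta(x-s)^{2}}=e^{-\tilde\beta(u-\tilde s)^{2}}$ with $\tilde\beta=4^{-j}\beta$ and $\tilde s=2^{j}s$. Under this substitution the fine-scale basis functions become unit-scale Gaussians: from \eqref{eq:Gaussian multiresolution basis},
\[
e^{-\alpha(2^{j}x-k)^{2}}=2^{-j/2}\sqrt{\tfrac{\pi}{\alpha}}\,\phi_{j,k}(x),
\]
so that an expansion of $e^{-\tilde\beta(u-\tilde s)^{2}}$ in the Gaussians $e^{-\alpha(u-k)^{2}}$ is, after undoing the substitution, exactly an expansion of $e^{-\beta(x-s)^{2}}$ in the $\phi_{j,k}$.

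Next I would check the hypotheses and identify the role of the scale condition. The two-sided bound $4^{j-2}\alpha<\beta\le4^{j-1}\alpha$ translates into $\alpha/16<\tilde\beta\le\alpha/4$; in particular $0<\tilde\beta<\alpha$, so Theorem~\ref{thm: projection of a Gaussian} applies with exponents $\alpha,\tilde\beta$ and shift $\tilde s$. Since $\epsilon(\alpha,\beta)=\vartheta_{3}\!\left(0,e^{-\frac{\pi^{2}}{\alpha}(1-\beta/\alpha)}\right)-1$ is increasing in $\beta$ (raising $\beta$ enlarges the nome of $\vartheta_{3}$), the upper bound $\tilde\beta\le\alpha/4$ forces $\epsilon(\alpha,\tilde\beta)\le\epsilon(\alpha,\alpha/4)=\epsilon$, the single constant \eqref{eq:parameter epsilon} of Corollary~\ref{cor:app_two_scale_eq}. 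The lower bound $\beta>4^{j-2}\alpha$ plays no part in the error estimate; it merely selects, for each $\beta>0$, the unique scale $j$ for which the intervals $(4^{j-2}\alpha,4^{j-1}\alpha]$ tile $(0,\infty)$, i.e. the coarsest scale that still resolves the Gaussian to accuracy $\epsilon$.

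Finally I would apply Theorem~\ref{thm: projection of a Gaussian} to $e^{-\tilde\beta(u-\tilde s)^{2}}$ and substitute $u=2^{j}x$ back. Because the bound is relative, its right-hand side $\epsilon(\alpha,\tilde\beta)e^{-\tilde\beta(u-\tilde s)^{2}}$ returns cleanly to $\epsilon\,e^{-\beta(x-s)^{2}}$, giving \eqref{eq:expansion on scale j}. Reading off the coefficients is then routine bookkeeping: replacing each $e^{-\alpha(u-k)^{2}}$ by $2^{-j/2}\sqrt{\pi/\alpha}\,\phi_{j,k}$, absorbing the convolution prefactor $\alpha/\sqrt{\pi(\alpha-\tilde\beta)}$ from Theorem~\ref{thm: projection of a Gaussian}, and rewriting the exponent via $(k-\tilde s)^{2}=4^{j}(2^{-j}k-s)^{2}$ produces the explicit $g_{k}^{j}$ of \eqref{eq:coefficients on scale j}. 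I do not anticipate any genuine difficulty: the whole argument is the remark that a multiplicative error bound is dilation-invariant, so the arbitrary-scale claim is literally Theorem~\ref{thm: projection of a Gaussian} read in rescaled coordinates, with $j$ pinned down by the stated bound on $\beta$ so that $\tilde\beta$ lands in $(0,\alpha/4]$ and the uniform error $\epsilon$ applies.
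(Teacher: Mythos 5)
Your proof is correct and is essentially the paper's own argument: select $j$ from the stated bracketing of $\beta$, rescale to $\tilde\beta=4^{-j}\beta$ and $\tilde s=2^{j}s$, apply Theorem~\ref{thm: projection of a Gaussian} to the rescaled Gaussian, and substitute $x\mapsto 2^{j}x$; you additionally make explicit (as the paper does not) why $\tilde\beta\le\alpha/4$ yields the single uniform constant $\epsilon$ of \eqref{eq:parameter epsilon}. One small note: carrying out your coefficient bookkeeping actually gives $g_{k}^{j}=2^{-j/2}\sqrt{\alpha/\left(\alpha-4^{-j}\beta\right)}\,e^{-\frac{\alpha\beta}{\alpha-4^{-j}\beta}\left(2^{-j}k-s\right)^{2}}$, i.e.\ without the $\pi$ appearing in \eqref{eq:coefficients on scale j} --- that $\pi$ is a typo in the statement, as one can confirm against Corollary~\ref{cor:app_two_scale_eq} (the case $j=0$) and against the coefficients $g_{k}^{j,\pm}$ used in the proof of Theorem~\ref{thm:The-PDF-of the product}.
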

\begin{proof}
Given $\beta$, we select the scale $j$ so that $4^{j-2}\alpha<\beta\le4^{j-1}\alpha$.
Once $j$ is selected, we rescale the exponent $\beta$ and the shift
$s$ as $\widetilde{\beta}=4^{-j}\beta$ and $\widetilde{s}=2^{j}s$
and compute coefficients using \eqref{eq:key approximation} for the
rescaled Gaussian $e^{-\widetilde{\beta}\left(x-\widetilde{s}\right)^{2}}$,
\[
\left|e^{-\widetilde{\beta}\left(x-\widetilde{s}\right)^{2}}-\frac{\alpha}{\sqrt{\pi\left(\alpha-\widetilde{\beta}\right)}}\sum_{k\in\mathbb{Z}}e^{-\frac{\alpha\widetilde{\beta}}{\alpha-\widetilde{\beta}}\left(k-\widetilde{s}\right)^{2}}e^{-\alpha\left(x-k\right)^{2}}\right|\le\epsilon e^{-\widetilde{\beta}\left(x-\widetilde{s}\right)^{2}}.
\]
Replacing the variable $x$ by $2^{j}x$ in the expression above,
we arrive at \eqref{eq:expansion on scale j}.
\end{proof}

\section{\label{sec:Distribution-of-the}Distribution of the product of independent
random variables}

To motivate our approach, we first consider two independent Gaussian
variables $X\thicksim N\left(\mu_{x},\sigma_{x}^{2}\right)$, $Y\thicksim N\left(\mu_{y},\sigma_{y}^{2}\right)$,
with $\mu_{x}=\mu_{y}=0$. The distribution of their product variable,
$Z=XY,$ is given by

\begin{equation}
p_{00}\left(t\right)=\frac{1}{\pi\sigma_{x}\sigma_{y}}K_{0}\left(\frac{\left|t\right|}{\sigma_{x}\sigma_{y}}\right),\label{eq:K_0 distribution}
\end{equation}
where $K_{0}$ is modified Bessel function of the second kind. Hence,
the PDF $p_{00}$ has an integrable logarithmic singularity at zero
even though the normal distributions do not have any singularities.
In general, the contribution of such singularity to the cumulative
distribution function (CDF) may or may not be significant but it is
always present as it follows from the representation of $p$ in \eqref{eq:integral-to-compute-1}.
Specifically, for $p_{00}\left(t\right)$ in \eqref{eq:K_0 distribution},
as $t$ approaches $0$ we have 
\[
p_{00}\left(t\right)=\frac{1}{\pi\sigma_{x}\sigma_{y}}\left(-\gamma+\log\left(2\sigma_{x}\sigma_{y}\right)-\log\left|t\right|\right)+\mathcal{O}\left(t^{2}\log\left|t\right|\right),
\]
where $\gamma$ is the Euler constant. For three or more Gaussian
PDFs with zero means, their product can be expressed via Meijer G-functions
(see \cite[Definition 9.301]{GRA-RYZ:2007}). However, when the Gaussian
PDFs have non-zero means, we are not aware of any explicit expression
for the PDF of their product in terms of special functions. 

We start by computing the PDF of the product of two independent random
variables, $X$ with PDF $f\left(x\right)$, $\int_{\mathbb{R}}f\left(x\right)dx=1$,
and the Gaussian variable $Y\thicksim N\left(\mu_{y},\sigma_{y}^{2}\right)$
with PDF 
\[
g\left(y\right)=\frac{1}{\sqrt{2\pi}\sigma_{y}}e^{-\frac{(y-\mu_{y})^{2}}{2\sigma_{y}^{2}}}.
\]
The PDF of their product, $Z=XY$, is expressed via the integral 
\begin{equation}
p\left(t\right)=\frac{1}{\sqrt{2\pi}\sigma_{y}}\int_{-\infty}^{\infty}{\displaystyle \int_{-\infty}^{\infty}f\left(x\right)e^{-\frac{(y-\mu_{y})^{2}}{2\sigma_{y}^{2}}}\delta\left(t-xy\right)dxdy}.\label{eq:Definition of the PDF}
\end{equation}
We assume that the PDF $f$ is a smooth function except at zero and
a finite number of points where it may have integrable singularities.
Specifically, $f$ may have an integrable logarithmic singularity
at $x=0$, $f\left(x\right)\sim\left(\log\left|x\right|\right)^{m}$,
for some integer $m\ge1$. We also assume a mild decay of $f$ as
$x\to\infty$, 
\[
\left|f\left(x\right)\right|\le\frac{C_{0}}{\left|x\right|^{2}},\,\,\,\mbox{for}\,\,\,\left|x\right|>C_{1},
\]
where $C_{0}$ and $C_{1}$ are positive constants. In particular,
$f$ can be the Cauchy distribution.

In the next theorem we obtain a multiresolution approximation of $p\left(t\right)$,
where the coefficients at all scales are given explicitly by integrals
of well-behaved functions over a unit interval. These integrals are
evaluated numerically via adaptive integration, see Section~\ref{sub:Adaptive-method-for}
below.
\begin{thm}
\label{thm:The-PDF-of the product}The PDF of the product of two independent
random variables $X$ and the Gaussian variable $Y\thicksim N\left(\mu_{y},\sigma_{y}^{2}\right)$
in \eqref{eq:Definition of the PDF} can be approximated as

\begin{equation}
\left|p\left(t\right)-\sum_{j\in\mathbb{Z}}\sum_{k\in\mathbb{Z}}w_{k}^{j}\phi_{jk}\left(t\right)\right|\le\epsilon p\left(t\right)\label{eq:multiresolution expansion of PDF}
\end{equation}
where $\epsilon$ is given in \eqref{eq:parameter epsilon},
\begin{equation}
w_{k}^{j}=2^{-j/2}\log2\int_{0}^{1}\frac{w_{+}\left(\tau\right)+w_{-}\left(\tau\right)}{\sqrt{1-4^{\tau-2}}}d\tau,\label{eq:Coefficients of MR expansion of PDF}
\end{equation}
 
\[
w_{+}\left(\tau\right)=\frac{1}{\sqrt{2\pi}\sigma_{y}}f\left(\frac{2^{2-j}}{\sqrt{2\alpha}\sigma_{y}}2^{-\tau}\right)e^{-\frac{\alpha4^{\tau-2}}{1-4^{\tau-2}}\left(k-\frac{\mu_{y}}{\sqrt{2\alpha}\sigma_{y}}2^{-\tau+2}\right)^{2}},
\]
and 
\[
w_{-}\left(\tau\right)=\frac{1}{\sqrt{2\pi}\sigma_{y}}f\left(-\frac{2^{2-j}}{\sqrt{2\alpha}\sigma_{y}}2^{-\tau}\right)e^{-\frac{\alpha4^{\tau-2}}{1-4^{\tau-2}}\left(k+\frac{\mu_{y}}{\sqrt{2\alpha}\sigma_{y}}2^{-\tau+2}\right)^{2}}.
\]
\end{thm}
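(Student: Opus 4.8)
The plan is to first collapse the double integral in \eqref{eq:Definition of the PDF} to a single integral by performing the $y$-integration against the delta function. Since $\delta(t-xy)=|x|^{-1}\delta(y-t/x)$ for $x\neq0$ and the line $x=0$ is null, this gives
\begin{equation*}
p(t)=\frac{1}{\sqrt{2\pi}\sigma_{y}}\int_{\mathbb{R}}\frac{f(x)}{|x|}\,e^{-\frac{(t-\mu_{y}x)^{2}}{2\sigma_{y}^{2}x^{2}}}\,dx,
\end{equation*}
which exhibits $p$ as a continuous superposition, in the variable $t$, of Gaussians with exponent $\beta(x)=1/(2\sigma_{y}^{2}x^{2})$ and center $s(x)=\mu_{y}x$, weighted by the nonnegative density $f(x)/|x|$. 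The decay hypothesis $|f(x)|\le C_{0}|x|^{-2}$ ensures absolute convergence at infinity, while the mild singularity of $f$ at the origin keeps the integral finite near $x=0$; note that as $x\to0$ one has $\beta(x)\to\infty$, so the narrow, tall Gaussians clustered at $t=0$ are precisely what generate the logarithmic singularity of $p$, which in the GMRA picture lives on the fine scales $j\to+\infty$.

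Next I would invoke Theorem~\ref{thm:The-Gaussian-on arbitrary scale} pointwise in $x$: each Gaussian $e^{-\beta(x)(t-s(x))^{2}}$ is replaced by its scale-$j(x)$ expansion $\sum_{k}g_{k}^{j}\phi_{jk}(t)$, where $j(x)$ is the unique scale with $4^{j-2}\alpha<\beta(x)\le4^{j-1}\alpha$ and the coefficients are given by \eqref{eq:coefficients on scale j}. The bound \eqref{eq:multiresolution expansion of PDF} then follows in relative form essentially for free: because $f(x)/|x|\ge0$, integrating the pointwise estimate $|e^{-\beta(x)(t-s(x))^{2}}-\sum_{k}g_{k}^{j}\phi_{jk}(t)|\le\epsilon\, e^{-\beta(x)(t-s(x))^{2}}$ against this weight and using the triangle inequality yields
\begin{equation*}
\Bigl|p(t)-\frac{1}{\sqrt{2\pi}\sigma_{y}}\int_{\mathbb{R}}\frac{f(x)}{|x|}\sum_{k}g_{k}^{j(x)}\phi_{jk}(t)\,dx\Bigr|\le\frac{\epsilon}{\sqrt{2\pi}\sigma_{y}}\int_{\mathbb{R}}\frac{f(x)}{|x|}e^{-\beta(x)(t-s(x))^{2}}\,dx=\epsilon\,p(t).
\end{equation*}

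It then remains to recognize the integral on the left as $\sum_{j}\sum_{k}w_{k}^{j}\phi_{jk}(t)$. For this I would partition the $x$-axis into the dyadic bands $B_{j}=\{x:4^{j-2}\alpha<\beta(x)\le4^{j-1}\alpha\}$, each a union of a positive and a negative interval on which the scale $j(x)$ is constant, interchange the $x$-integral with the sum over $k$, and perform on each band the change of variables $\beta(x)=4^{j-2}\alpha\,4^{\tau}$, equivalently $|x|=\tfrac{2^{2-j}}{\sqrt{2\alpha}\sigma_{y}}2^{-\tau}$ with $\tau\in[0,1]$. Under this substitution $dx/|x|=\log2\,d\tau$, the factor $\alpha-4^{-j}\beta=\alpha(1-4^{\tau-2})$ produces the $1/\sqrt{1-4^{\tau-2}}$ appearing in \eqref{eq:Coefficients of MR expansion of PDF}, and inserting the two centers $s=\pm\mu_{y}|x|$ into the exponential of \eqref{eq:coefficients on scale j} gives exactly the shifted arguments $k\mp\frac{\mu_{y}}{\sqrt{2\alpha}\sigma_{y}}2^{-\tau+2}$ defining $w_{+}$ and $w_{-}$; summing the positive and negative contributions produces $w_{+}+w_{-}$ and hence the stated formula for $w_{k}^{j}$.

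The delta-function reduction and the change-of-variables bookkeeping are routine. The one step needing genuine care is the passage from the continuum of exponents $\beta(x)$ to the discrete scales, together with the interchange of the $x$-integral and the summation over $k$: one must confirm that the decay and integrability hypotheses on $f$ make the rearrangement absolutely convergent (Tonelli applies since the integrand is nonnegative for fixed $t$ and is dominated by $(1+\epsilon)\,e^{-\beta(x)(t-s(x))^{2}}$), so that the clean relative bound survives and the double series converges to the claimed expansion. The denominator $\sqrt{1-4^{\tau-2}}$ stays bounded away from zero on $[0,1]$, so the substitution introduces no spurious singularity.
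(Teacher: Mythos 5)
Your proof is correct and follows essentially the same route as the paper's: both collapse the double integral via the delta function into a superposition over $x$ of Gaussians in $t$ with exponent $\beta(x)=1/(2\sigma_{y}^{2}x^{2})$ and center $\mu_{y}x$, apply Theorem~\ref{thm:The-Gaussian-on arbitrary scale} with the scale fixed by $4^{j-2}\alpha<\beta\le4^{j-1}\alpha$, and use the non-negativity of $f$ to turn the pointwise relative estimate into the relative bound $|d(t)|\le\epsilon\,p(t)$. The only difference is order of operations: the paper performs the logarithmic substitution $x=2^{-s}/(\sqrt{2\alpha}\sigma_{y})$ first and then splits $\mathbb{R}$ into the unit intervals $[j-2,j-1]$, whereas you select the scale $j(x)$ pointwise and then substitute band by band --- the same computation.
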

\begin{proof}
Splitting the integration over $x$ in \eqref{eq:Definition of the PDF},
we have 

\begin{eqnarray*}
p(t) & = & \frac{1}{\sqrt{2\pi}\sigma_{y}}\left(\int_{0}^{\infty}dx\int_{-\infty}^{\infty}f\left(x\right)e^{-\frac{(y-\mu_{y})^{2}}{2\sigma_{y}^{2}}}\delta(xy-t)dy\right.\\
 & + & \left.\int_{-\infty}^{0}dx{\displaystyle \int_{-\infty}^{\infty}f\left(x\right)e^{-\frac{(y-\mu_{y})^{2}}{2\sigma_{y}^{2}}}\delta(xy-t)dy}\right)
\end{eqnarray*}
 and changing $x\rightarrow-x$ in the second integral, obtain
\begin{eqnarray*}
p(t) & = & \frac{1}{\sqrt{2\pi}\sigma_{y}}\left(\int_{0}^{\infty}dx\int_{-\infty}^{\infty}f\left(x\right)e^{-\frac{(y-\mu_{y})^{2}}{2\sigma_{y}^{2}}}\delta(xy-t)dy\right.\\
 & + & \left.\int_{0}^{\infty}dx{\displaystyle \int_{-\infty}^{\infty}f\left(-x\right)e^{-\frac{(y-\mu_{y})^{2}}{2\sigma_{y}^{2}}}\delta(xy+t)dy}\right).
\end{eqnarray*}
Since $\delta(xy)=\frac{1}{x}\delta(y)$ for $x>0$, we have
\[
p(t)=\frac{1}{\sqrt{2\pi}\sigma_{y}}\int_{0}^{\infty}\left(f\left(x\right)e^{-\frac{(\frac{t}{x}-\mu_{y})^{2}}{2\sigma_{y}^{2}}}+f\left(-x\right)e^{-\frac{(\frac{t}{x}+\mu_{y})^{2}}{2\sigma_{y}^{2}}}\right)\frac{1}{x}dx.
\]
Changing variables $x=\frac{1}{\sqrt{2\alpha}\sigma_{y}}2^{-s}$,
we obtain
\begin{eqnarray}
p(t) & = & c\int_{-\infty}^{\infty}\left(f\left(\frac{2^{-s}}{\sqrt{2\alpha}\sigma_{y}}\right)e^{-\alpha4^{s}\left(t-\frac{\mu_{y}}{\sqrt{2\alpha}\sigma_{y}}2^{-s}\right)^{2}}\right.\nonumber \\
 & + & \left.f\left(-\frac{2^{-s}}{\sqrt{2\alpha}\sigma_{y}}\right)e^{-\alpha4^{s}\left(t+\frac{\mu_{y}}{\sqrt{2\alpha}\sigma_{y}}2^{-s}\right)^{2}}\right)ds,\label{eq:before splitting into unit intervals}
\end{eqnarray}
where $c=\log\left(2\right)/\left(\sqrt{2\pi}\sigma_{y}\right)$.
Splitting the integration in \eqref{eq:before splitting into unit intervals}
into integrals over unit intervals, we rewrite
\begin{eqnarray*}
p(t) & = & c\sum_{j\in\mathbb{Z}}\int_{j-2}^{j-1}\left(f\left(\frac{2^{-s}}{\sqrt{2\alpha}\sigma_{y}}\right)e^{-\alpha4^{s}\left(t-\frac{\mu_{y}}{\sqrt{2\alpha}\sigma_{y}}2^{-s}\right)^{2}}\right.\\
 & + & \left.f\left(-\frac{2^{-s}}{\sqrt{2\alpha}\sigma_{y}}\right)e^{-\alpha4^{s}\left(t+\frac{\mu_{y}}{\sqrt{2\alpha}\sigma_{y}}2^{-s}\right)^{2}}\right)ds
\end{eqnarray*}
and changing the variable of integration, $s=\tau+j-2$, we arrive
at 
\begin{eqnarray}
p(t) & = & c\sum_{j\in\mathbb{Z}}\int_{0}^{1}\left[f\left(\frac{2^{-\tau-j+2}}{\sqrt{2\alpha}\sigma_{y}}\right)e^{-\alpha4^{\tau+j-2}\left(t-\frac{\mu_{y}}{\sqrt{2\alpha}\sigma_{y}}2^{-\tau-j+2}\right)^{2}}\right.\nonumber \\
 & + & \left.f\left(-\frac{2^{-\tau-j+2}}{\sqrt{2\alpha}\sigma_{y}}\right)e^{-\alpha4^{\tau+j-2}\left(t+\frac{\mu_{y}}{\sqrt{2\alpha}\sigma_{y}}2^{-\tau-j+2}\right)^{2}}\right]d\tau.\label{eq:intermediate formula}
\end{eqnarray}
For $\tau\in\left[0,1\right]$ we have $1\le4^{\tau}\le4$, so that
$\beta=\alpha4^{\tau+j-2}$ satisfies $4^{j-2}\alpha\le\beta\le4^{j-1}\alpha$.
Consequently, using \eqref{eq:expansion on scale j} and \eqref{eq:coefficients on scale j},
we obtain
\[
\left|e^{-\alpha4^{\tau+j-2}\left(t\pm\frac{\mu_{y}}{\sqrt{2\alpha}\sigma_{y}}2^{-\tau-j+2}\right)^{2}}-\sum_{k\in\mathbb{Z}}g_{k}^{j,\pm}\phi_{jk}\left(t\right)\right|\le\epsilon e^{-\alpha4^{\tau+j-2}\left(t\pm\frac{\mu_{y}}{\sqrt{2\alpha}\sigma_{y}}2^{-\tau-j+2}\right)^{2}},
\]
where
\[
g_{k}^{j,\pm}=2^{-j/2}\sqrt{\frac{1}{1-4^{\tau-2}}}e^{-\frac{\alpha4^{\tau-2}}{1-4^{\tau-2}}\left(k\pm\frac{\mu_{y}}{\sqrt{2\alpha}\sigma_{y}}2^{-\tau+2}\right)^{2}},
\]
and use these expressions to replace the corresponding Gaussians in
\eqref{eq:intermediate formula}. Setting $w_{k}^{j}$ as in \eqref{eq:Coefficients of MR expansion of PDF},
we now estimate $d\left(t\right)=p\left(t\right)-\sum_{j\in\mathbb{Z}}\sum_{k\in\mathbb{Z}}w_{k}^{j}\phi_{jk}\left(t\right)$.
We have 
\begin{eqnarray}
\label{eq:intermediate formula-1}\\
d\left(t\right) & = & c\sum_{j\in\mathbb{Z}}\int_{0}^{1}\left[f\left(\frac{2^{-\tau-j+2}}{\sqrt{2\alpha}\sigma_{y}}\right)\left(e^{-\alpha4^{\tau+j-2}\left(t-\frac{\mu_{y}}{\sqrt{2\alpha}\sigma_{y}}2^{-\tau-j+2}\right)^{2}}-\sum_{k\in\mathbb{Z}}g_{k}^{j,-}\phi_{jk}\left(x\right)\right)\right.\nonumber \\
 & + & \left.f\left(-\frac{2^{-\tau-j+2}}{\sqrt{2\alpha}\sigma_{y}}\right)\left(e^{-\alpha4^{\tau+j-2}\left(t+\frac{\mu_{y}}{\sqrt{2\alpha}\sigma_{y}}2^{-\tau-j+2}\right)^{2}}-\sum_{k\in\mathbb{Z}}g_{k}^{j,+}\phi_{jk}\left(x\right)\right)\right]d\tau.\nonumber 
\end{eqnarray}
Since $f$ is non-negative, we obtain
\begin{eqnarray}
\left|d\left(t\right)\right| & \le & \epsilon c\sum_{j\in\mathbb{Z}}\int_{0}^{1}\left[f\left(\frac{2^{-\tau-j+2}}{\sqrt{2\alpha}\sigma_{y}}\right)e^{-\alpha4^{\tau+j-2}\left(t-\frac{\mu_{y}}{\sqrt{2\alpha}\sigma_{y}}2^{-\tau-j+2}\right)^{2}}\right.\nonumber \\
 & + & \left.f\left(-\frac{2^{-\tau-j+2}}{\sqrt{2\alpha}\sigma_{y}}\right)e^{-\alpha4^{\tau+j-2}\left(t+\frac{\mu_{y}}{\sqrt{2\alpha}\sigma_{y}}2^{-\tau-j+2}\right)^{2}}\right]d\tau.\label{eq:intermediate formula-2}
\end{eqnarray}
Using \eqref{eq:intermediate formula} and \eqref{eq:intermediate formula-2},
we have 
\[
\left|d\left(t\right)\right|\le\epsilon p\left(t\right)
\]
yielding the estimate in \eqref{eq:multiresolution expansion of PDF}. 
\end{proof}

\subsection{\label{sub:Adaptive-method-for}Adaptive method for computing integrals }

We compute the integrals 

\[
w_{j,k}^{\pm}=\int_{0}^{1}f\left(\mp\frac{2^{2-j}}{\sqrt{2\alpha}\sigma_{y}}2^{-\tau}\right)e^{-\frac{\alpha4^{\tau-2}}{1-4^{\tau-2}}\left(k\pm\frac{\mu_{y}}{\sqrt{2\alpha}\sigma_{y}}2^{-\tau+2}\right)^{2}}\frac{1}{\sqrt{1-4^{\tau-2}}}d\tau
\]
using adaptive integration. The integrand may have only integrable
singularities in the interior of the interval $\left[0,1\right]$
due to our assumption on $f$. Since $f$ may have a logarithmic singularity
at $x=0$, i.e. $f\left(\left|x\right|\right)\sim\left(\log\left|x\right|\right)^{m}$
near $x=0$, we have 
\[
f\left(\frac{2^{2-j}}{\sqrt{2\alpha}\sigma_{y}}2^{-\tau}\right)\sim\left(\log_{2}\left(\frac{2^{2-j}}{\sqrt{2\alpha}\sigma_{y}}\right)-\tau\right)^{m},
\]
so that, if $j$ is large, $f$ behaves like a polynomial as a function
of $\tau$ . Such behavior of the integrand explains our choice of
a simple and efficient method for adaptive integration. Specifically,
we subdivide the integral $\left[0,1\right]$ adaptively in a hierarchical
fashion while using a fixed number of Legendre nodes to compute the
integral on each subinterval. The subdivision terminates when the
integral computed on two subintervals matches that computed on the
parent interval within an appropriately chosen accuracy. In our examples
we used the Gauss-Legendre quadrature with 10 nodes and set the accuracy
parameter of the adaptive integrator to $10^{-14}$; the local maximum
depth of subdivision remained small in all cases. We present a pseudo-code
for this algorithm in Appendix~A.

\subsection{\label{sub:Representing-PDFs-in-GMRA}Representing PDFs in GMRA}

In order to reduce the computation of the PDF of the product in the
general case to the assumptions of Theorem~\ref{thm:The-PDF-of the product},
at least one of the PDFs has to be represented as a Gaussian mixture.
We consider two approaches for this purpose. The first approach is
applicable to distributions that can be expressed via integrals involving
Gaussians. We illustrate this case using the Laplace distribution.
The second approach relies on a rapidly decaying Fourier transform
of a PDF so that it can be approximated by a bandlimited function.
We illustrate this case using the Gumbel distribution. We also refer
to \cite{LA-MA-SC:2004,MAZ-SCH:2007} for a way to approximate a smooth
PDF via a linear combination of Gaussians provided that the accuracy
requirements are not too stringent. We note that these approximations
can be constructed even for smooth PDFs with compact support since,
outside the support, the approximation can be made arbitrarily small.

\subsubsection{Discretization of integrals to approximate PDFs via Gaussian mixtures}

Although the PDF of the Laplace distribution has two parameters, it
is sufficient to approximate the function in \eqref{eq:Laplace distribution with parameters}
for the particular case $b=1$ and $\mu=0$, 

\[
f\left(x\right)=\frac{1}{2}e^{-\left|x\right|}.
\]
Following \cite[Eq. 37]{BEY-MON:2010}, we have 
\[
\frac{1}{2}e^{-\left|x\right|}=\frac{1}{4\sqrt{\pi}}\int_{-\infty}^{\infty}e^{-e^{t}/4-x^{2}e^{-t}+\frac{1}{2}t}dt.
\]
A linear combination of Gaussians is then obtained by discretizing
this integral. It is sufficient to use the trapezoidal rule due to
the rapid decay of the integrand as $t\to\pm\infty$. We limit integration
to the interval $t\in\left[-40,10\right]$ and use $h=5/12$ as the
step size. Setting $t_{n}=-40+5\cdot n/12$, and using $N=120$ terms,
we arrive at the Gaussian mixture approximation,\lyxdeleted{beylkin}{Tue Jun  6 03:59:17 2017}{
} 
\[
\frac{1}{2}e^{-\left|x\right|}\approx\frac{h}{4\sqrt{\pi}}\sum_{n=0}^{N-1}e^{-e^{-t_{n}}x^{2}}e^{-e^{t_{n}}/4+\frac{1}{2}t_{n}}.
\]
We use this approximation in Example~\ref{sub:The-Laplace-and-Gumbel}
to compute the PDF of the product of two independent random variables,
one with Laplace distribution and the other with Gumbel distribution.
Approximation of the latter via a Gaussian mixture is described next.

\subsubsection{Using an interpolating Gaussian-based scaling function to approximate
PDFs via Gaussian mixtures}

An interpolating Gaussian-based scaling function spans the same subspace
as the function $\phi$, since it is constructed as
\[
L\left(x\right)=\sum_{k\in\mathbb{Z}}c_{k}\phi\left(x-k\right).
\]
Using the interpolating property $L\left(n\right)=\delta_{n0}$, $n\in\mathbb{Z}$,
we also have
\[
\delta_{n0}=\sum_{k\in\mathbb{Z}}c_{k}\phi\left(n-k\right).
\]
Multiplying this identity by $e^{-2\pi inp}$, and adding over $n$,
we obtain 
\begin{equation}
1=\left(\sum_{k\in\mathbb{Z}}c_{k}e^{-2\pi ikp}\right)\left(\sum_{n\in\mathbb{Z}}\phi\left(n\right)e^{-2\pi inp}\right).\label{eq:Interpolating function property}
\end{equation}
By evaluating the Fourier transform, 
\[
\widehat{L}\left(p\right)=\left(\sum_{k\in\mathbb{Z}}c_{k}e^{-2\pi ikp}\right)\widehat{\phi}\left(p\right),
\]
and using \eqref{eq:Interpolating function property}, we arrive at
\[
\widehat{L}\left(p\right)=\frac{\widehat{\phi}\left(p\right)}{\sum_{n\in\mathbb{Z}}\phi\left(n\right)e^{-2\pi inp}}.
\]
By Poisson summation formula,
\[
\sum_{n\in\mathbb{Z}}\phi\left(n\right)e^{-2\pi inp}=\sqrt{\frac{\alpha}{\pi}}\sum_{n\in\mathbb{Z}}e^{-\alpha n^{2}}e^{-2\pi inp}=\sqrt{\frac{\alpha}{\pi}}\vartheta_{3}\left(\pi p,e^{-\alpha}\right),
\]
we finally obtain 
\begin{equation}
\widehat{L}\left(p\right)=\frac{\widehat{\phi}\left(p\right)}{\sqrt{\frac{\alpha}{\pi}}\vartheta_{3}\left(\pi p,e^{-\alpha}\right)}.\label{eq:relation between interpolating and Gaussian scaling fncs}
\end{equation}
To illustrate the use of the interpolating scaling functions in the
construction of Gaussian mixtures, we consider the Gumbel distribution
in \eqref{eq:Gumbel Distribution with parameters} with parameters
$\sigma=1$ and $\mu=0$, 
\[
g\left(x\right)=e^{-x-e^{-x}}.
\]
Gaussian mixtures for general parameters are then obtained from this
case by rescaling. First we select an interval to approximate $g$.
For example, outside the interval $\left[-6,50\right]$ the function
$g$ is less than $\approx2\cdot10^{-22}$. We treat $g$ on this
interval as approximately periodic and, changing variables, $x=-6+56s$,
rescale it to the interval $\left[0,1\right]$, 
\begin{equation}
\tilde{g}\left(s\right)=g\left(-6+56s\right).\label{eq:rescaled_gumbel}
\end{equation}
Sampling $\tilde{g}\left(s\right)$ at $N$ points, where $N$ is
sufficiently large, we write its approximation using the interpolating
scaling function $L$ as 
\[
\tilde{g}\left(s\right)=\sum_{k=0}^{N-1}\tilde{g}\left(\frac{k}{N}\right)N^{1/2}L\left(Ns-k\right).
\]
Using the scaling function $\phi\left(s\right)=\sqrt{\frac{\alpha}{\pi}}e^{-\alpha s^{2}}$,
we seek the coefficients $g_{k}$ such that 
\begin{equation}
\tilde{g}\left(s\right)=\sum_{k=0}^{N-1}g_{k}N^{1/2}\phi\left(Ns-k\right).\label{eq:final Gaussian mixture}
\end{equation}
Computing the Fourier transform, we obtain 
\[
\widehat{\tilde{g}}\left(p\right)=N^{-1/2}\left(\sum_{k=0}^{N-1}\tilde{g}\left(\frac{k}{N}\right)e^{-2\pi ikp/N}\right)\widehat{L}\left(\frac{p}{N}\right),
\]
and 
\[
\widehat{\tilde{g}}\left(p\right)=N^{-1/2}\left(\sum_{k=0}^{N-1}g_{k}e^{-2\pi ikp/N}\right)\widehat{\phi}\left(\frac{p}{N}\right).
\]
Sampling at $N$ points and using \eqref{eq:relation between interpolating and Gaussian scaling fncs},
we obtain 
\[
\sum_{k=0}^{N-1}g_{k}e^{-2\pi ikn/N}=\frac{\sum_{k=0}^{N-1}\tilde{g}\left(\frac{k}{N}\right)e^{-2\pi ikn/N}}{\sqrt{\frac{\alpha}{\pi}}\vartheta_{3}\left(\pi\frac{n}{N},e^{-\alpha}\right)},\,\,\,n=0,\dots N-1,
\]
and compute the coefficients $g_{k}$ via the Fast Fourier Transform
(FFT). In this construction $N$ is chosen to have small prime factors
since we use FFT. We illustrate the result in Figure~\ref{fig:Error-of-approximating Gumbel},
where we set $N=300$. We use this Gaussian mixture as an approximation
of the Gumbel distribution in Example~\ref{sub:The-Laplace-and-Gumbel}.
\begin{rem}
We plan to develop a more efficient algorithm to yield multiresolution
representation of PDFs (i.e., with fewer terms) by applying the approach
presented in this section in an iterative fashion. 
\end{rem}
\begin{figure}
\includegraphics[scale=0.4]{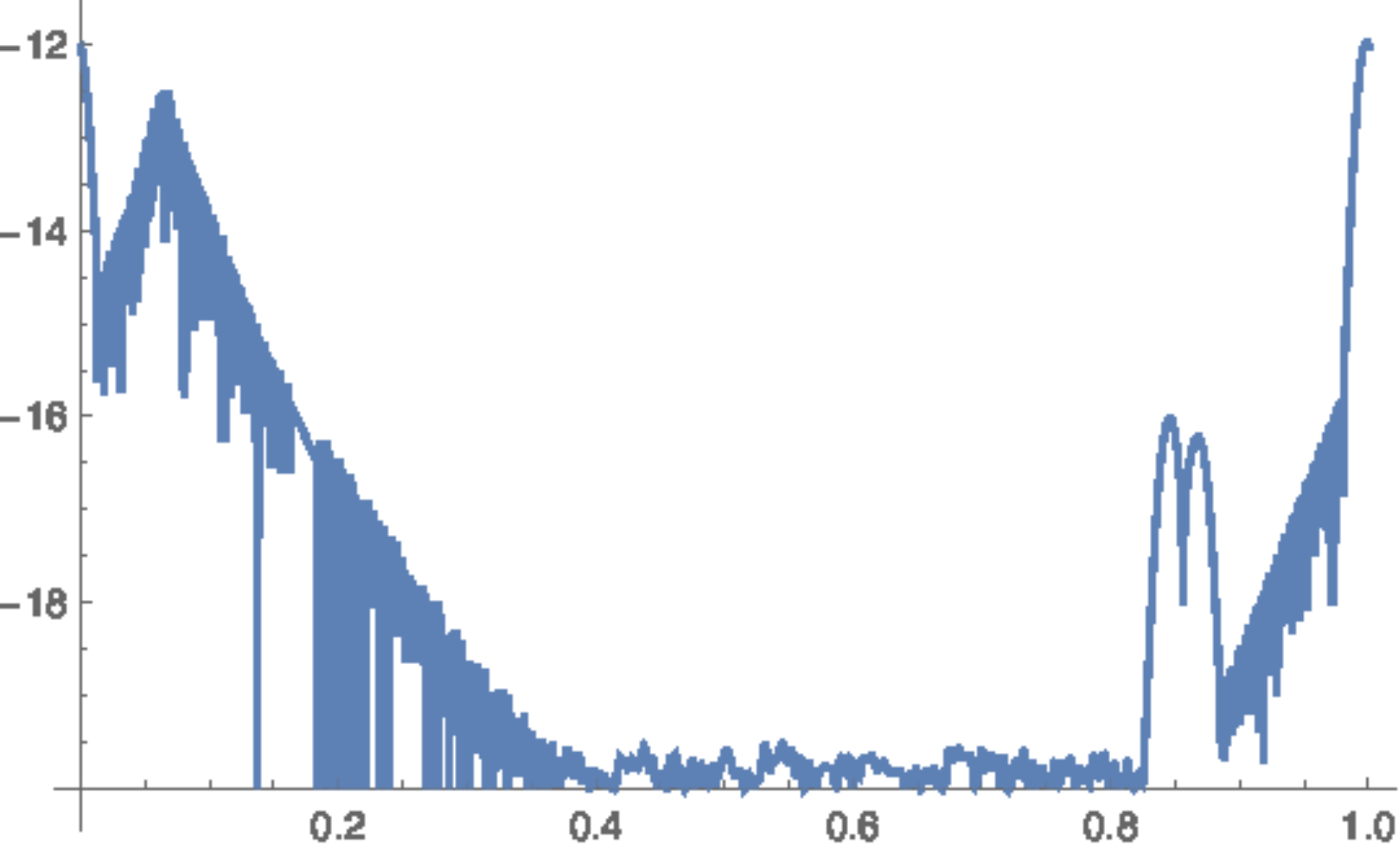}\caption{Error\label{fig:Error-of-approximating Gumbel} of approximating the
rescaled Gumbel PDF $\tilde{g}\left(s\right)$ in \eqref{eq:rescaled_gumbel}
as a Gaussian mixture with $N=300$ terms.}
\end{figure}

\subsection{Computing the PDFs in GMRA representation}

For computing the PDF of the product of independent random variables
where the original PDFs are in GMRA representation, we repeat the
derivation in Theorem~\ref{thm:The-PDF-of the product} using two
representative Gaussians from our approximate basis \eqref{eq:Gaussian multiresolution basis},
namely 
\begin{equation}
f\left(x\right)=2^{n/2}e^{-\alpha\left(2^{n}x-m\right)}\:\:\mbox{and}\:\:g\left(y\right)=2^{l/2}e^{-\alpha\left(2^{l}y-m'\right)}.\label{f_and_g}
\end{equation}
We have 
\begin{equation}
p(t)=2^{(n+l)/2}\int_{-\infty}^{\infty}{\displaystyle \int_{-\infty}^{\infty}e^{-\alpha\left(2^{n}x-m\right)^{2}}e^{-\alpha\left(2^{l}y-m'\right)^{2}}\delta\left(xy-t\right)dxdy}\label{eq:integral between basis functions-2}
\end{equation}
and employing the same changes of variables as in the proof of Theorem~\ref{thm:The-PDF-of the product},
arrive at 
\[
p\left(t\right)=2^{(n+l)/2}\int_{0}^{\infty}\left(e^{-\alpha\left(2^{n}x-m\right)^{2}}e^{-\alpha\left(2^{l}\frac{t}{x}-m'\right)^{2}}+e^{-\alpha\left(2^{n}x+m\right)^{2}}e^{-\alpha\left(2^{l}\frac{t}{x}+m'\right)^{2}}\right)\frac{1}{x}dx.
\]
Changing variables $y=2^{n}x$, 
\[
p\left(t\right)=2^{(n+l)/2}\int_{0}^{\infty}\left(e^{-\alpha\left(y-m\right)^{2}}e^{-\alpha\left(2^{n+l}\frac{t}{y}-m'\right)^{2}}+e^{-\alpha\left(y+m\right)^{2}}e^{-\alpha\left(2^{n+l}\frac{t}{y}+m'\right)^{2}}\right)\frac{1}{y}dy,
\]
we define 
\begin{eqnarray*}
u_{mm'}\left(t\right) & = & p\left(2^{-n-l}t\right)2^{-\left(n+l\right)/2}\\
 & = & \int_{0}^{\infty}\left(e^{-\alpha\left(y-m\right)^{2}}e^{-\alpha\left(\frac{t}{y}-m'\right)^{2}}+e^{-\alpha\left(y+m\right)^{2}}e^{-\alpha\left(\frac{t}{y}+m'\right)^{2}}\right)\frac{1}{y}dy.
\end{eqnarray*}
Changing variables $y=2^{-s}$, $s\in\mathbb{R}$, and splitting the
real axis into unit subintervals (see proof of Theorem~\ref{thm:The-PDF-of the product}),
we obtain
\[
u_{mm'}\left(t\right)=\log\left(2\right)\sum_{j\in\mathbb{Z}}\int_{j-2}^{j-1}\left(e^{-\alpha\left(2^{-s}-m\right)^{2}}e^{-\alpha\left(2^{s}t-m'\right)^{2}}+e^{-\alpha\left(2^{-s}+m\right)^{2}}e^{-\alpha\left(2^{s}t+m'\right)^{2}}\right)ds.
\]
Changing variables one more time, $s=\tau+j-2$, we arrive at 
\begin{eqnarray*}
u_{mm'}\left(t\right) & = & \log\left(2\right)\sum_{j\in\mathbb{Z}}\int_{0}^{1}\left(e^{-\alpha\left(2^{2-j-\tau}-m\right)^{2}}e^{-\alpha\left(2^{\tau+j-2}t-m'\right)^{2}}\right.\\
 & + & \left.e^{-\alpha\left(2^{2-j-\tau}+m\right)^{2}}e^{-\alpha\left(2^{\tau+j-2}t+m'\right)^{2}}\right)d\tau.
\end{eqnarray*}
Finally, we use Theorem~\ref{thm:The-Gaussian-on arbitrary scale}
to obtain

\[
u_{mm'}\left(t\right)=\log\left(2\right)\sum_{j\in\mathbb{Z}}\sum_{k\in\mathbb{Z}}\phi_{jk}\left(t\right)2^{-j/2}u_{kmm'}^{j},
\]
where
\begin{eqnarray*}
u_{kmm'}^{j} & = & \int_{0}^{1}\frac{1}{\sqrt{1-4^{\tau-2}}}\left(e^{-\alpha\left(2^{2-j-\tau}-m\right)^{2}}e^{-\frac{\alpha}{1-4^{\tau-2}}\left(2^{\tau-2}k-m'\right)^{2}}\right.\\
 & + & \left.e^{-\alpha\left(2^{2-j-\tau}+m\right)^{2}}e^{-\frac{\alpha}{1-4^{\tau-2}}\left(2^{\tau-2}k+m'\right)^{2}}\right)d\tau.
\end{eqnarray*}
Since the integrand is comprised of well-behaved functions, we can
discretize the integral using the Gauss-Legendre quadrature with $M$
nodes directly (rather than using an adaptive integrator). It follows
that
\[
u_{kmm'}^{j}=\sum_{i=1}^{M}\eta_{i}\left(U_{j,m}^{-,i}V_{km'}^{-,i}+U_{j,m}^{+,i}V_{km'}^{+,i}\right),
\]
where $\eta_{i}=w_{i}\left(1-4^{\tau_{i}-2}\right)^{-1/2}$, $w_{i}$
are the weights of the Gauss-Legendre quadrature,
\[
U_{j,m}^{\pm,i}=e^{-\alpha\left(2^{2-j-\tau_{i}}\pm m\right)^{2}},
\]
and 
\[
V_{km'}^{\pm,i}=e^{-\frac{\alpha}{1-4^{\tau_{i}-2}}\left(2^{\tau_{i}-2}k+m'\right)^{2}}.
\]
We note that matrices $U_{j,m}^{\pm,i}$ and $V_{km'}^{\pm,i}$ can
be precomputed and that they are sparse (banded) since small entries
can be removed. Also, these matrices do not depend on the scales $n$
and $l$ of the original Gaussians $f$ and $g$ in \eqref{f_and_g}.

\subsection{Computing the PDF of a product of two random variables with a bivariate
normal distribution }

We show how to compute the PDF $p$,

\begin{equation}
p\left(t\right)=\int_{\mathbb{R}^{2}}g\left(x,y\right)\delta\left(xy-t\right)dxdy,\label{eq:product_correlated}
\end{equation}
where $g$ is the joint PDF
\[
g\left(x,y\right)=\frac{1}{2\pi\sigma_{x}\sigma_{y}\sqrt{1-\rho^{2}}}e^{-\frac{1}{2}\left(\begin{array}{c}
x-\mu_{x}\\
y-\mu_{y}
\end{array}\right)^{T}\Sigma^{-1}\left(\begin{array}{c}
x-\mu_{x}\\
y-\mu_{y}
\end{array}\right)},
\]
with
\[
\Sigma^{-1}=\frac{1}{1-\rho^{2}}\left(\begin{array}{cc}
\frac{1}{\sigma_{x}^{2}} & -\frac{\rho}{\sigma_{x}\sigma_{y}}\\
-\frac{\rho}{\sigma_{x}\sigma_{y}} & \frac{1}{\sigma_{y}^{2}}
\end{array}\right).
\]
Here $\rho$ is the correlation coefficient and $\rho=0$ corresponds
to the two variables being uncorrelated and independent. Using the
next lemma we reduce the computation of $p$ to the results on independent
normal variables in Section~\ref{sec:Distribution-of-the}.
\begin{lem}
The PDF p in \eqref{eq:product_correlated} can be described as
\begin{eqnarray*}
p\left(t\right) & = & \frac{e^{\frac{\rho}{\left(1-\rho^{2}\right)}\left(\frac{t-\mu_{x}\mu_{y}}{\sigma_{x}\sigma_{y}}\right)+\frac{\rho^{2}}{2\left(1-\rho^{2}\right)}\left[\left(\frac{\mu_{x}}{\sigma_{x}}\right)^{2}+\left(\frac{\mu_{y}}{\sigma_{y}}\right)^{2}\right]}}{2\pi\sigma_{x}\sigma_{y}\sqrt{1-\rho^{2}}}\\
 &  & \int_{\mathbb{R}^{2}}e^{-\frac{1}{2\left(1-\rho^{2}\right)}\left[\left(\frac{x-\mu_{x}}{\sigma_{x}}+\rho\frac{\mu_{y}}{\sigma_{y}}\right)^{2}+\left(\frac{y-\mu_{y}}{\sigma_{y}}+\rho\frac{\mu_{x}}{\sigma_{x}}\right)^{2}\right]}\delta\left(xy-t\right)dxdy.
\end{eqnarray*}
\end{lem}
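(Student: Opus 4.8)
The plan is to manipulate the exponent of the bivariate Gaussian purely algebraically so that the cross-term $(x-\mu_x)(y-\mu_y)$, which couples the two variables, is reorganized into the product $xy$ that the delta function freezes at the value $t$. First I would introduce the shorthand $a=(x-\mu_x)/\sigma_x$ and $b=(y-\mu_y)/\sigma_y$, so that the quadratic form in the exponent of $g$ reads
\[
-\frac{1}{2\left(1-\rho^{2}\right)}\left(a^{2}-2\rho ab+b^{2}\right),
\]
keeping the normalization $1/\bigl(2\pi\sigma_x\sigma_y\sqrt{1-\rho^2}\bigr)$ aside as an overall prefactor.

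The central step is to complete the square to match the target integrand, whose exponent is $-\frac{1}{2(1-\rho^{2})}\bigl[(a+\rho\mu_y/\sigma_y)^{2}+(b+\rho\mu_x/\sigma_x)^{2}\bigr]$. Expanding this target expression and subtracting the original exponent, I would find the discrepancy to be
\[
\frac{1}{2\left(1-\rho^{2}\right)}\left[2\rho\frac{(x-\mu_x)\mu_y+(y-\mu_y)\mu_x}{\sigma_x\sigma_y}+2\rho\frac{(x-\mu_x)(y-\mu_y)}{\sigma_x\sigma_y}+\rho^{2}\left(\frac{\mu_x^{2}}{\sigma_x^{2}}+\frac{\mu_y^{2}}{\sigma_y^{2}}\right)\right].
\]
The routine but crucial computation is that the three $\rho$-linear terms telescope: $(x-\mu_x)\mu_y+(y-\mu_y)\mu_x+(x-\mu_x)(y-\mu_y)=xy-\mu_x\mu_y$. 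Thus the discrepancy factor depends on $(x,y)$ only through the product $xy$, up to the fixed constants in $\mu_x,\mu_y,\sigma_x,\sigma_y$. I would also confirm that the pure quadratic parts match, since the $a^2$ and $b^2$ coefficients of $(a+c)^2+(b+d)^2$ reproduce exactly $a^2+b^2$, so no stray $ab$ term survives outside the $xy$ combination.

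The key observation, on which the whole reduction hinges, is that this discrepancy factor is therefore constant on the support of $\delta(xy-t)$: inside the integral one may set $xy=t$. Hence $e^{(\text{discrepancy})}$ equals $e^{\frac{\rho}{1-\rho^{2}}\frac{t-\mu_x\mu_y}{\sigma_x\sigma_y}+\frac{\rho^{2}}{2(1-\rho^{2})}[(\mu_x/\sigma_x)^{2}+(\mu_y/\sigma_y)^{2}]}$, a quantity independent of the integration variables, and can be pulled outside the integral. What remains inside is precisely the claimed shifted-Gaussian integrand, and the extracted factor is exactly the stated prefactor.

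I anticipate the only obstacle is bookkeeping rather than analysis: keeping the signs straight through the completion of the square and verifying the telescoping identity. Since the entire manipulation is algebraic, the main care is to justify substituting $xy=t$ inside the distributional integral, which follows because $\delta(xy-t)$ is supported on $\{xy=t\}$ (equivalently, by testing against a smooth function and evaluating the $t$-dependent factor there). With that remark in place, assembling the pieces completes the argument.
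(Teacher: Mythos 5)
Your proposal is correct and follows essentially the same route as the paper: standardize the variables, complete the square so the cross term is absorbed into $xy-\mu_x\mu_y$, and use that $\delta(xy-t)$ lets you replace $xy$ by $t$ so the leftover exponential factor is constant and pulls out of the integral. The paper organizes the algebra by first rewriting $\tilde{x}\tilde{y}$ in terms of $xy$ and then completing the square, while you compute the difference of the two exponents directly, but this is the same computation and the same key observation.
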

\begin{proof}
Denote by $Q$ the quadratic form, 
\begin{eqnarray*}
Q\left(x,y\right) & = & -\frac{1}{2}\left(\begin{array}{c}
x-\mu_{x}\\
y-\mu_{y}
\end{array}\right)^{T}\Sigma^{-1}\left(\begin{array}{c}
x-\mu_{x}\\
y-\mu_{y}
\end{array}\right)\\
 & = & -\frac{1}{2\left(1-\rho^{2}\right)}\left[\left(\frac{x-\mu_{x}}{\sigma_{x}}\right)^{2}+\left(\frac{y-\mu_{y}}{\sigma_{y}}\right)^{2}-2\rho\left(\frac{x-\mu_{x}}{\sigma_{x}}\right)\left(\frac{y-\mu_{y}}{\sigma_{y}}\right)\right]\\
 & = & -\frac{\tilde{x}^{2}+\tilde{y}^{2}-2\rho\tilde{x}\tilde{y}}{2\left(1-\rho^{2}\right)},
\end{eqnarray*}
where $\tilde{x}=\left(x-\mu_{x}\right)/\sigma_{x}$ and $\tilde{y}=\left(y-\mu_{y}\right)/\sigma_{y}$.
Since 
\begin{align*}
\tilde{x}\tilde{y} & =\frac{xy}{\sigma_{x}\sigma_{y}}+\frac{\mu_{x}\mu_{y}}{\sigma_{x}\sigma_{y}}-\frac{\mu_{y}}{\sigma_{x}\sigma_{y}}\left(\sigma_{x}\tilde{x}+\mu_{x}\right)-\frac{\mu_{x}}{\sigma_{x}\sigma_{y}}\left(\sigma_{y}\tilde{y}+\mu_{y}\right)\\
 & =\frac{xy-\mu_{x}\mu_{y}}{\sigma_{x}\sigma_{y}}-\frac{\mu_{y}}{\sigma_{y}}\tilde{x}-\frac{\mu_{x}}{\sigma_{x}}\tilde{y},
\end{align*}
with $q_{x}=\mu_{x}/\sigma_{x}$ and $q_{y}=\mu_{y}/\sigma_{y}$,
we rewrite the numerator of $Q\left(x,y\right)$ as
\begin{align*}
\tilde{x}^{2}+\tilde{y}^{2}-2\rho\tilde{x}\tilde{y} & =\tilde{x}^{2}+2\rho q_{y}\tilde{x}+\tilde{y}^{2}+2\rho q_{y}\tilde{y}-2\rho\left(\frac{xy-\mu_{x}\mu_{y}}{\sigma_{x}\sigma_{y}}\right)\\
 & =\left(\tilde{x}+\rho q_{y}\right)^{2}+\left(\tilde{y}+\rho q_{x}\right)^{2}-2\rho\left(\frac{xy-\mu_{x}\mu_{y}}{\sigma_{x}\sigma_{y}}\right)-\rho^{2}\left(q_{x}^{2}+q_{y}^{2}\right).
\end{align*}
Therefore, using that $xy$ can be replaced by $t$ in the integrand
of \eqref{eq:product_correlated}, we have
\begin{eqnarray*}
\int_{\mathbb{R}^{2}}e^{Q\left(x,y\right)}\delta\left(xy-t\right)dxdy & = & e^{\frac{\rho}{\left(1-\rho^{2}\right)}\frac{t-\mu_{x}\mu_{y}}{\sigma_{x}\sigma_{y}}+\frac{\rho^{2}}{2\left(1-\rho^{2}\right)}q_{x}^{2}+q_{y}^{2}}\\
 &  & \int_{\mathbb{R}^{2}}e^{-\frac{1}{2\left(1-\rho^{2}\right)}\left[\left(\tilde{x}+\rho q_{y}\right)^{2}+\left(\tilde{y}+\rho q_{x}\right)^{2}\right]}\delta\left(xy-t\right)dxdy
\end{eqnarray*}
which proves the result. \end{proof}
\begin{rem}
\label{Ordering remark}The integral describing the PDF of the product
\eqref{eq:integral-to-compute-1} does not depend on the order of
the factors $f$ and $g$. However, the computation of \eqref{eq:integral-to-compute-1}
relies on the evaluation of the coefficients in \eqref{eq:Coefficients of MR expansion of PDF}
and these coefficients do depend on the order of the factors. While
the resulting PDF does not depend on the chosen order, one of the
two possible representations may require significantly more coefficients.
To explain the reason for such behavior, let us assume that both random
variables in the product have normal distributions but different means
and deviations, $\mu_{x}$, $\sigma_{x}$ and $\mu_{y}$, $\sigma_{y}$.
According to Theorem~\ref{thm:The-PDF-of the product}, we need to
evaluate 

\[
\frac{1}{\sqrt{2\pi}\sigma_{y}}f\left(\mp\frac{2^{2-j}}{\sqrt{2\alpha}\sigma_{y}}2^{-\tau}\right)=\frac{1}{2\pi\sigma_{x}\sigma_{y}}e^{-\frac{(\mp2^{2-j-\tau}-\sqrt{2\alpha}\sigma_{y}\mu_{x})^{2}}{4\alpha\sigma_{y}^{2}\sigma_{x}^{2}}}.
\]
As $j$ becomes large, we have 
\[
\lim_{j\to\infty}e^{-\frac{(\mp2^{2-j-\tau}-\sqrt{2\alpha}\sigma_{y}\mu_{x})^{2}}{4\alpha\sigma_{y}^{2}\sigma_{x}^{2}}}=e^{-\frac{\mu_{x}^{2}}{2\sigma_{x}^{2}}}.
\]
Thus, in order to reduce the number of scales, given the ratios, 
\begin{equation}
\frac{\mu_{x}^{2}}{2\sigma_{x}^{2}}\:\:\mbox{and}\:\:\frac{\mu_{y}^{2}}{2\sigma_{y}^{2}}\label{eq:ratios}
\end{equation}
we pick $f$ in \eqref{eq:Definition of the PDF} as the distribution
with the largest ratio. The impact of ordering the factors in this
way is illustrated in Example~\ref{sub:example6121}. 
\end{rem}
This remark implies that the ordering of the Gaussians $f$ and $g$
in \eqref{f_and_g}-\eqref{eq:integral between basis functions-2}
should be such that $m\ge m'$. We also note that in this particular
case the ratios in \eqref{eq:ratios} do not depend on the scales
$n$ and $l$. 
\begin{rem}
\label{rem: moments remark}Once a PDF $f$ is represented via GMRA,
\[
f\left(x\right)=\sum_{j,k}c_{j,k}\phi_{j,k}\left(x\right),
\]
where $j$ and $k$ only take a finite number of integer values, it
is immediate to compute its moments since the moments of the basis
functions $\phi_{j,k}$, are readily available. To be precise, for
any nonnegative integer $n$, we compute
\[
\mu_{j,k}^{\left(n\right)}=\int_{\mathbb{R}}\phi_{j,k}\left(x\right)x^{n}dx=2^{j/2}\sqrt{\frac{\alpha}{\pi}}\int_{\mathbb{R}}e^{-\alpha\left(2^{j}x-k\right)^{2}}x^{n}dx=2^{-j\left(n+\frac{1}{2}\right)}\mu_{k}^{\left(n\right)},
\]
where
\begin{equation}
\mu_{k}^{\left(n\right)}=\sqrt{\frac{\alpha}{\pi}}\int_{\mathbb{R}}e^{-\alpha\left(x-k\right)^{2}}x^{n}dx.\label{gaussian_moments}
\end{equation}
Thus we have $\mu_{k}^{\left(0\right)}=1$, $\mu_{k}^{\left(1\right)}=k$,
and higher moments of the basis functions $\phi_{j,k}$ can be evaluated
using the recurrence,
\[
\mu_{k}^{\left(n\right)}=k\mu_{k}^{\left(n-1\right)}+\frac{n-1}{2\alpha}\mu_{k}^{\left(n-2\right)},\ n\geq2,
\]
obtained by rewriting \eqref{gaussian_moments} as 
\begin{eqnarray*}
\mu_{k}^{\left(n\right)} & = & \sqrt{\frac{\alpha}{\pi}}\int_{\mathbb{R}}e^{-\alpha\left(x-k\right)^{2}}\left(k+x-k\right)x^{n-1}dx\\
 & = & k\mu_{k}^{\left(n-1\right)}+\sqrt{\frac{\alpha}{\pi}}\int_{\mathbb{R}}\frac{d}{dx}\left(\frac{e^{-\alpha\left(x-k\right)^{2}}}{-2\alpha}\right)x^{n-1}dx
\end{eqnarray*}
and integrating by parts. For the moments of $f$ we arrive at the
recurrence 
\[
\mathcal{M}^{\left(n\right)}=\int_{\mathbb{R}}f\left(x\right)x^{n}dx=\sum_{j}2^{-j\left(n+\frac{1}{2}\right)}\left(\sum_{k}c_{j,k}k\mu_{k}^{\left(n-1\right)}\right)+\frac{n-1}{\alpha}2^{-2j}\mathcal{M}^{\left(n-2\right)},\ n\geq2,
\]
where $\mathcal{M}^{\left(0\right)}=\sum_{j}2^{-j/2}\sum_{k}c_{j,k}$
and $\mathcal{M}^{\left(1\right)}=\sum_{j}2^{-3j/2}\sum_{k}c_{j,k}k.$
In order to compute the variance, we have
\[
E\left[\left(X-\mathcal{M}^{\left(1\right)}\right)^{2}\right]=\mathcal{M}^{\left(2\right)}-\left(\mathcal{M}^{\left(1\right)}\right)^{2},
\]
with similar formulas for skewness and kurtosis.
\end{rem}
~
\begin{rem}
\label{rem:expectation}More generally, an important use of the PDF
$p_{Z}$ represented in GMRA is to compute, for a function $u$, the
expectation of the variable $u\left(Z\right)$,
\begin{equation}
\mbox{E}\left[u\left(Z\right)\right]=\int_{-\infty}^{\infty}u\left(t\right)p_{Z}\left(t\right)dt.\label{eq:expectation integral}
\end{equation}
If the function $u$ is given analytically (i.e., we can evaluate
it at any point), the fact that we have a functional representation
of $p_{Z}$ allows us to use an appropriate quadrature to evaluate
this integral to a desired accuracy. If only samples of the function
$u$ are provided, then we can treat $p_{Z}$ as a weight and construct
an appropriate quadrature with nodes at locations where the values
of $u$ are available. On the other hand, if the function $u$ is
also representated in GMRA the integral in \eqref{eq:expectation integral}
can be evaluated explicitly. 

For example, if $u$ is a non-negative function and \foreignlanguage{english}{$\tilde{p}_{Z}$}
is the GMRA approximation of $p_{Z}$ such that 
\[
\left|p_{Z}\left(t\right)-\tilde{p}_{Z}\left(t\right)\right|\le\epsilon p_{Z}\left(t\right),
\]
(e.g. as in Theorem~\ref{thm:The-PDF-of the product}), then we have
the relative error estimate 
\[
\left|\mbox{E}\left[u\left(Z\right)\right]-\tilde{\mbox{E}}\left[u\left(Z\right)\right]\right|\le\epsilon\mbox{E}\left[u\left(Z\right)\right],
\]
where
\[
\tilde{\mbox{E}}\left[u\left(Z\right)\right]=\int_{-\infty}^{\infty}u\left(t\right)\tilde{p}_{Z}\left(t\right)dt.
\]

\end{rem}

\selectlanguage{american}%

\section{\label{sec:Numerical-Examples}Examples of computing PDFs of products
of independent random variables}

Unless noted otherwise, in all our numerical examples we set the parameter
$\alpha=1/4$, so that $\epsilon=2.77\cdot10^{-13}$ in \eqref{eq:parameter epsilon}.
For representing the PDF of the product of random variables, we use
a GMRA with a very large number of scales ($100$ scales) to demonstrate
that we can accurately resolve the singularity at zero. Obviously,
there is no need to use so many scales in practical applications;
we use them to illustrate that the accuracy of our approach is well
beyond the computational feasibility of any Monte-Carlo method. The
test code was written in Fortran 90 and run on a laptop with $2.20$
GHz Intel processor. While no speed comparison with a Monte-Carlo
method is meaningful, we note the actual time required for our algorithm.
Computing the coefficients of the product of two normal distributions
and using $100$ scales takes $\approx0.12$ seconds and evaluation
of the resulting function at $5,000$ points takes $\approx0.1$ second.
While the computation of the coefficients $w_{k}^{j}$ in \eqref{eq:multiresolution expansion of PDF}
is trivially parallelizable, we did not take advantage of it in our
code.

In all our examples, we display significant coefficients of the GMRA
representation, $w_{k}^{j}$ in \eqref{eq:multiresolution expansion of PDF},
as a matrix where the horizontal direction corresponds to the shift
$k$ and the vertical direction to the scale $j$. In interpreting
these images (see Figures~\ref{fig:example0101-1}, \ref{fig:example010101-2},
\ref{fig:Example2111-2}, \ref{fig:Example6121-3}, \ref{fig: Cauchy-2},
\ref{fig: Laplace2}, \ref{fig: Gumbel2} and \ref{fig:The-Laplace-and-the-Gumbel-figure-2}
below), it is important to remember that the shift $k$ of the basis
functions $2^{j/2}\phi\left(2^{j}t-k\right)$ indicates the distance
to the singularity at $t=0$, which is measured in scale dependent
units, i.e. $2^{-j}$ at the scale $j$. 

Just as an extra check, in the examples where explicit answers are
not available, we verify the computed distributions by evaluating
zero, first and second moments. In all cases these moments were within
the accuracy of computation. The moments of PDFs in GMRA representation
were computed as described in Remark~\ref{rem: moments remark}.

\subsection{Accuracy tests}

The PDFs of the product of two and three normal random variables with
zero means are available analytically and we use them to ascertain
accuracy of our algorithm.

\subsubsection{\label{sub:example0101}Two normal PDFs with zero means}

The exact PDF of the product of two random variables $X\thicksim N\left(\mu_{x},\sigma_{x}^{2}\right)$,
$Y\thicksim N\left(\mu_{y},\sigma_{y}^{2}\right)$, with $\mu_{x}=\mu_{y}=0$,
is given in \eqref{eq:K_0 distribution}. Setting $\sigma_{x}=\sigma_{y}=1$,
we compute the PDF of the product $Z=XY$ via our algorithm and compare
it with the exact PDF. The computed PDF $p_{Z}$ and its GMRA coefficients
are displayed in Figure~\ref{fig:example0101-1} and the relative
errors
\begin{equation}
\epsilon\left(x\right)=\log_{10}\left(\frac{\left|p_{Z}\left(10^{x}\right)-\frac{1}{\pi}K_{0}\left(10^{x}\right)\right|}{\frac{1}{\pi}K_{0}\left(10^{x}\right)}\right),\,\,\,-30\le x\le0,\label{eq:error(x)}
\end{equation}
for different values of the parameter $\alpha$ of the GMRA, are displayed
in Figure~\ref{fig:example0101-2}. The exact PDF in \eqref{eq:K_0 distribution}
has a logarithmic singularity at the origin and we use $100$ scales
to recover the PDF up to the interval $\left[-10^{-27},10^{-27}\right]$
with accuracy $\epsilon\approx10^{-13}$. The limit of resolution
of this singularity is illustrated in Figure~\ref{fig:example0101-2}
(again, well beyond what may be needed in applications). 

\begin{figure}[H]
\begin{centering}
\includegraphics[scale=0.3]{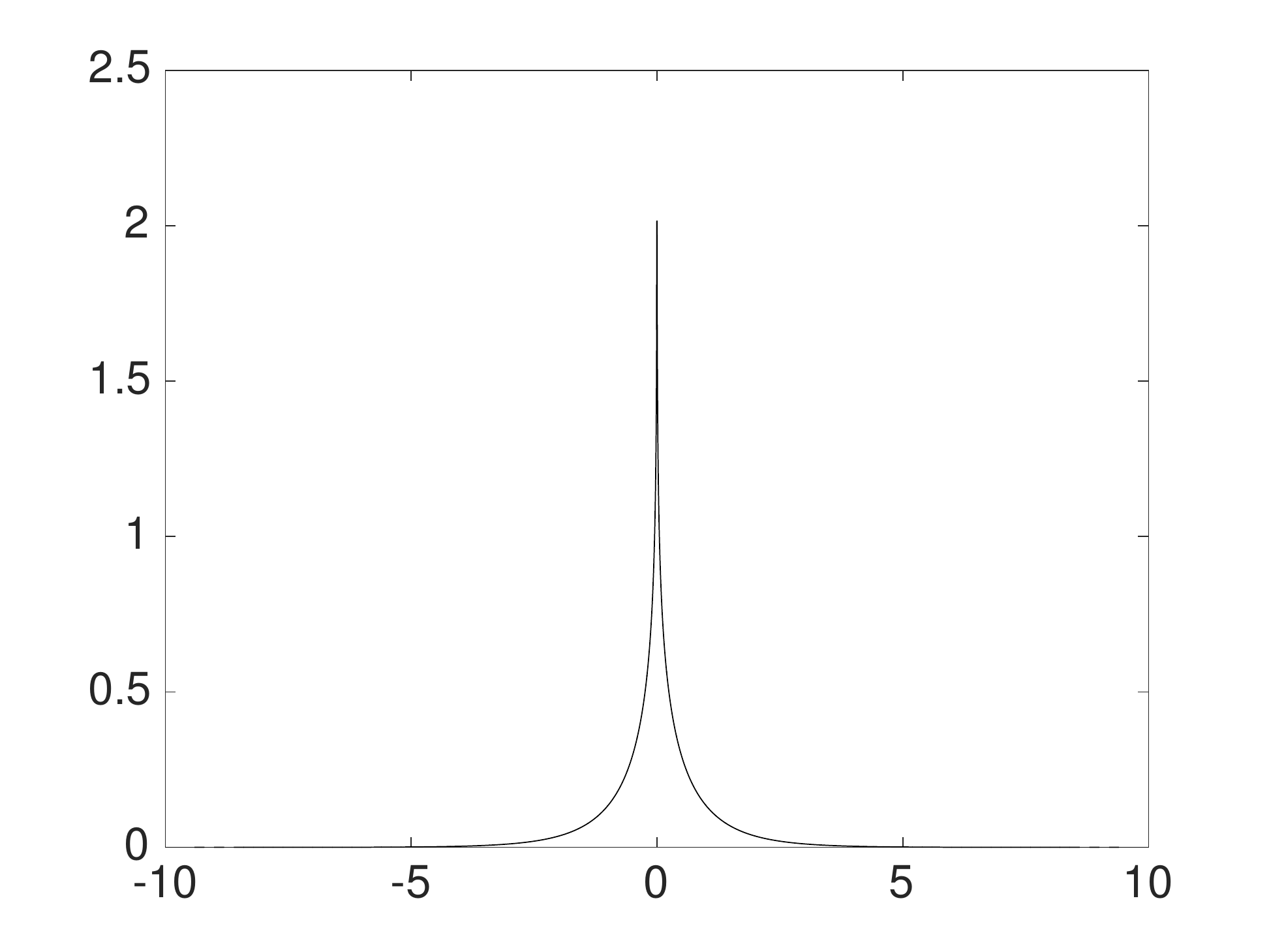}
\includegraphics[scale=0.3]{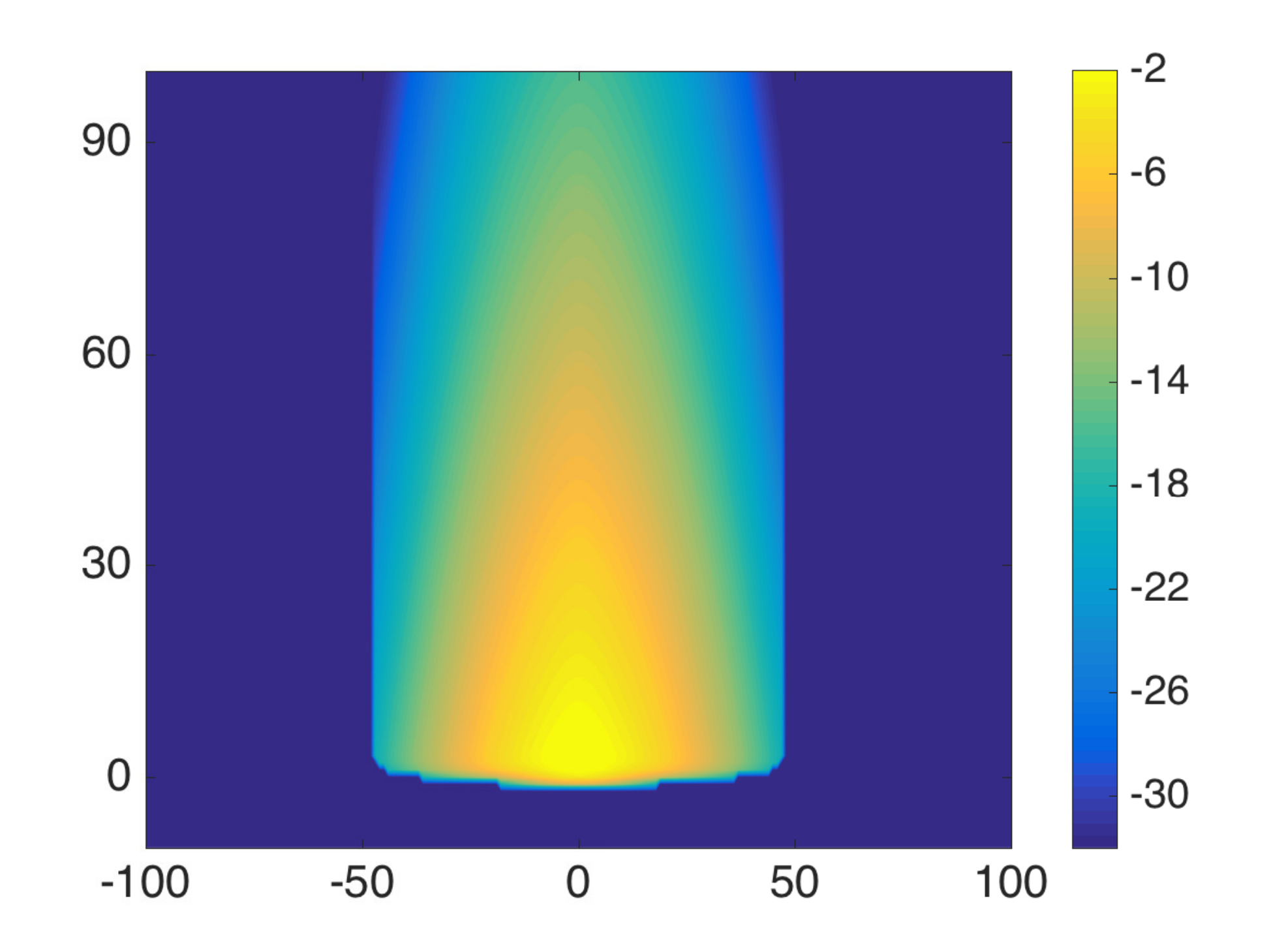}
\par\end{centering}

\centering{}\caption{\label{fig:example0101-1} PDF $p_{Z}$ of the product in Example~\ref{sub:example0101}
(left) and logarithm (base 10) of GMRA coefficients of $p_{Z}$ (right).}
\end{figure}

\begin{figure}[H]
\begin{centering}
\includegraphics[scale=0.3]{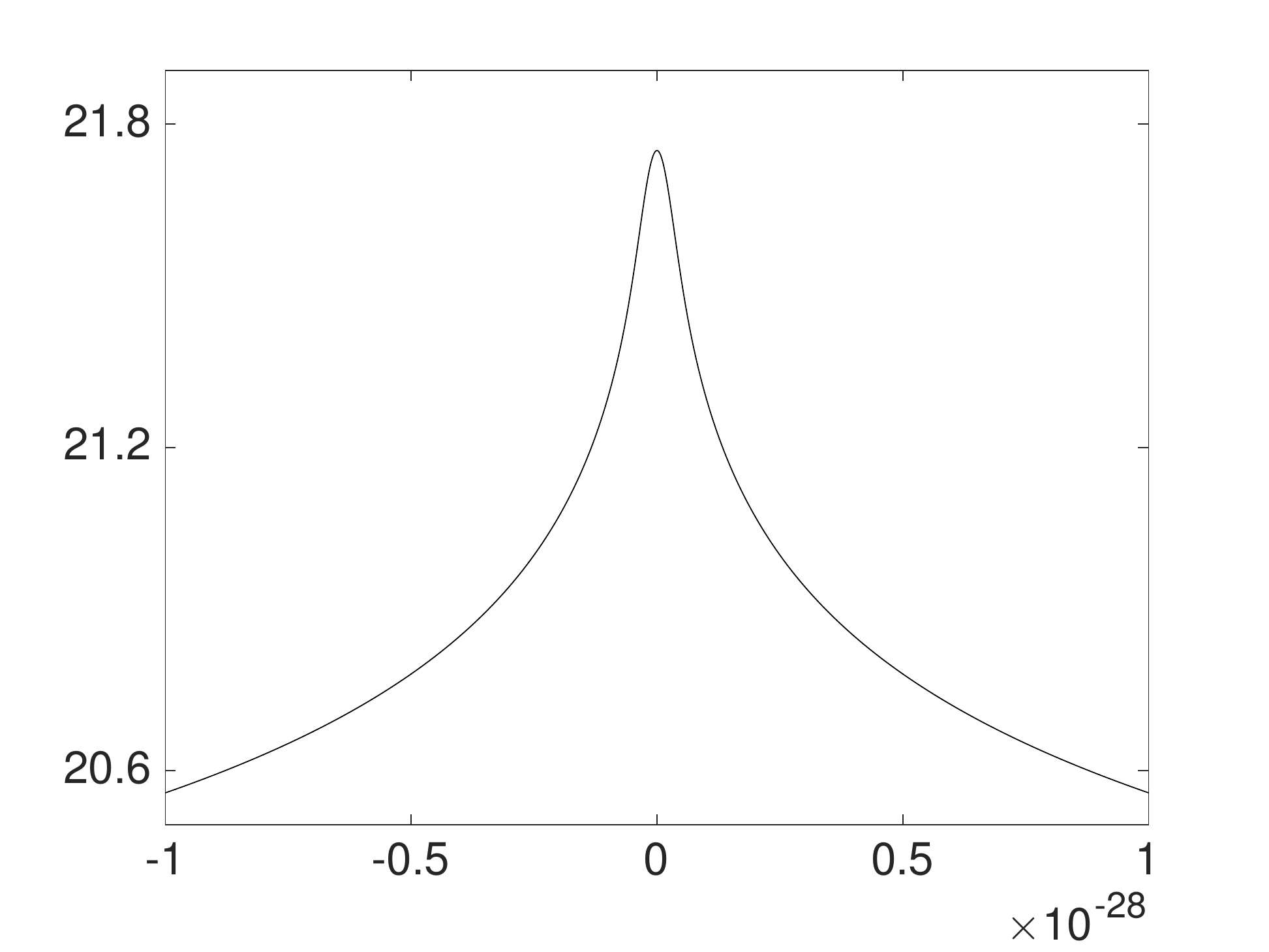}~ \includegraphics[scale=0.3]{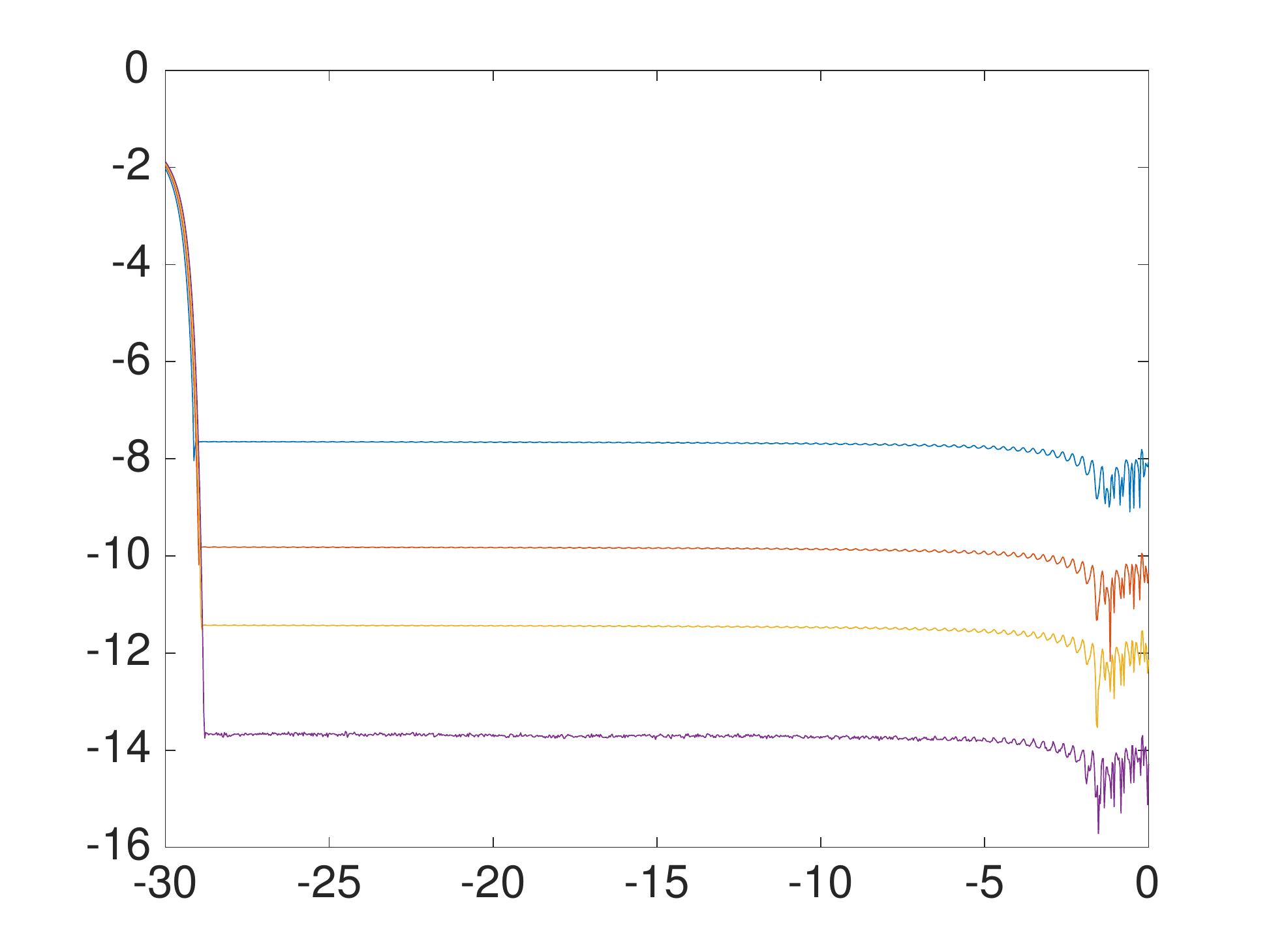}
\par\end{centering}

\centering{}\caption{\label{fig:example0101-2} The limit of resolution of singularity
of the PDF $p_{Z}$ in Example \ref{sub:example0101} using GMRA with
$100$ scales and $\alpha=0.25$ (left) and the relative error curves
\eqref{eq:error(x)} shown over the interval $\left[10^{-30},0\right]$
for different parameters $\alpha$ in the GMRA \eqref{eq:Gaussian multiresolution basis}.
The bottom curve corresponds to $\alpha=0.25$, followed by $\alpha=0.3$,
$\alpha=0.35$ and $\alpha=0.4$, as we go up in the display (right).}
\end{figure}

\subsubsection{\label{sub:example010101}Three normal PDFs with zero means}

We compute the PDF of the product of three zero-mean Gaussian random
variables, starting with the GMRA representation of the product of
two of them computed in the previous example. Given three zero-mean
Gaussian random variables, $X_{1}\thicksim N\left(0,\sigma_{1}^{2}\right)$,
$X_{2}\thicksim N\left(0,\sigma_{2}^{2}\right)$, and $X_{3}\thicksim N\left(0,\sigma_{3}^{2}\right)$,
the PDF of their product, $W=X_{1}X_{2}X_{3}$, is available analytically
as 
\[
p(t)=\frac{G_{0,3}^{3,0}\left(0,0,0;\frac{t^{2}}{8\sigma_{1}^{2}\sigma_{2}^{2}\sigma_{3}^{2}}\right)}{\left(2\pi\right)^{3/2}\sigma_{1}\sigma_{2}\sigma_{3}},
\]
where $G_{0,3}^{3,0}$ is a special case of Meijer G-function, see
\cite[Definition 9.301]{GRA-RYZ:2007}. The computed product PDF $p_{W}$
is obtained by multiplying $X_{3}\sim N\left(0,1\right)$ by the GMRA
representation of the PDF of $Z=X_{1}X_{2}$, as computed in Example
\ref{sub:example0101}. The PDFs of the random variables $Z$, $X_{3}$,
and $W$ are illustrated in Figure~\ref{fig:example010101-1}. As
in Example~\ref{sub:example0101}, we use $100$ scales and $\alpha=0.25$
to recover the PDF $p$ with accuracy $\epsilon\approx10^{-13}$.
In Figure~\ref{fig:example010101-2} we display the GMRA coefficients
of $p_{W}$ and its logarithmic singularity resolved to the interval
$\left[-10^{-28},10^{-28}\right]$. We compute the relative error, 

\begin{equation}
\epsilon\left(x\right)=\log_{10}\left(\frac{\left|p_{W}\left(10^{x}\right)-p\left(10^{x}\right)\right|}{p\left(10^{x}\right)}\right),\,\,\,-30\le x\le0,\label{eq:error(x)-1}
\end{equation}
for different values of the parameter $\alpha$ in the GMRA and display
them in Figure~\ref{fig:example010101-2}. 

\begin{figure}[H]
\begin{centering}
\includegraphics[scale=0.3]{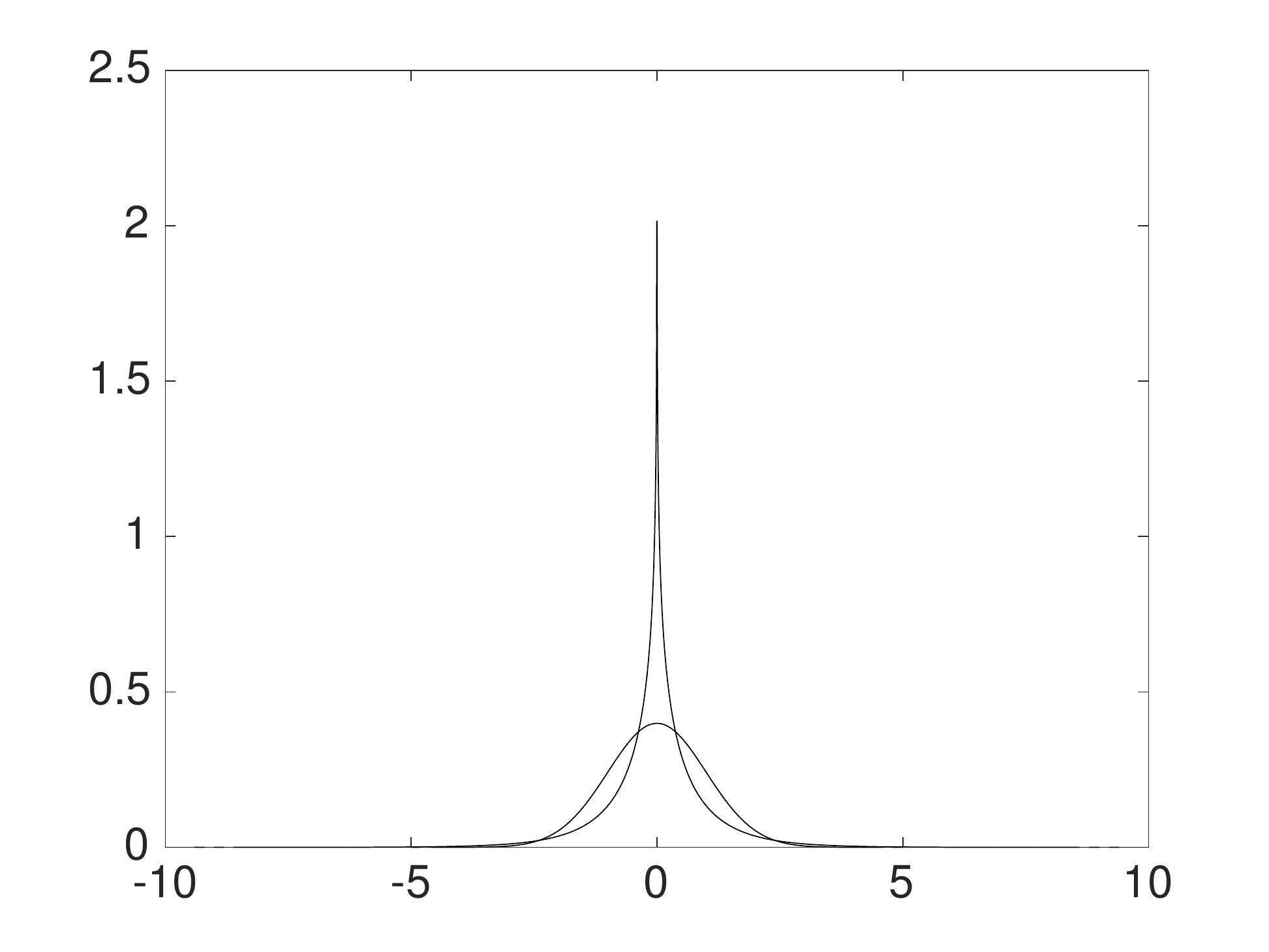}\includegraphics[scale=0.3]{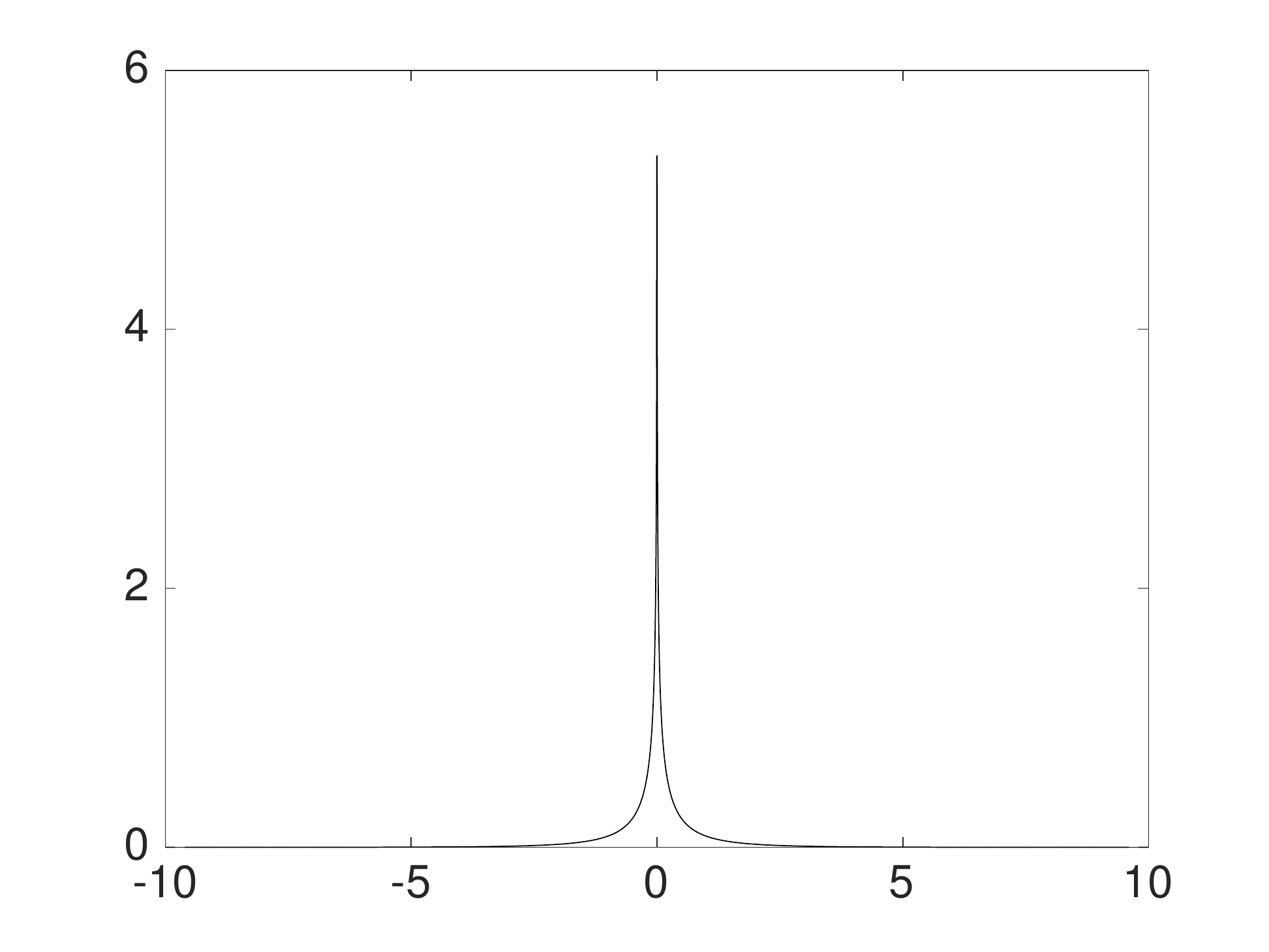}
\par\end{centering}

\centering{}\caption{\label{fig:example010101-1} PDFs of the random variables $Z=X_{1}X_{2}$
and $X_{3}$ in Example~\ref{sub:example010101} (left) and PDF of
the product random variable $W=ZX_{3}$ in Example~\ref{sub:example010101}
(right).}
\end{figure}

\begin{figure}[H]
\begin{centering}
\includegraphics[scale=0.3]{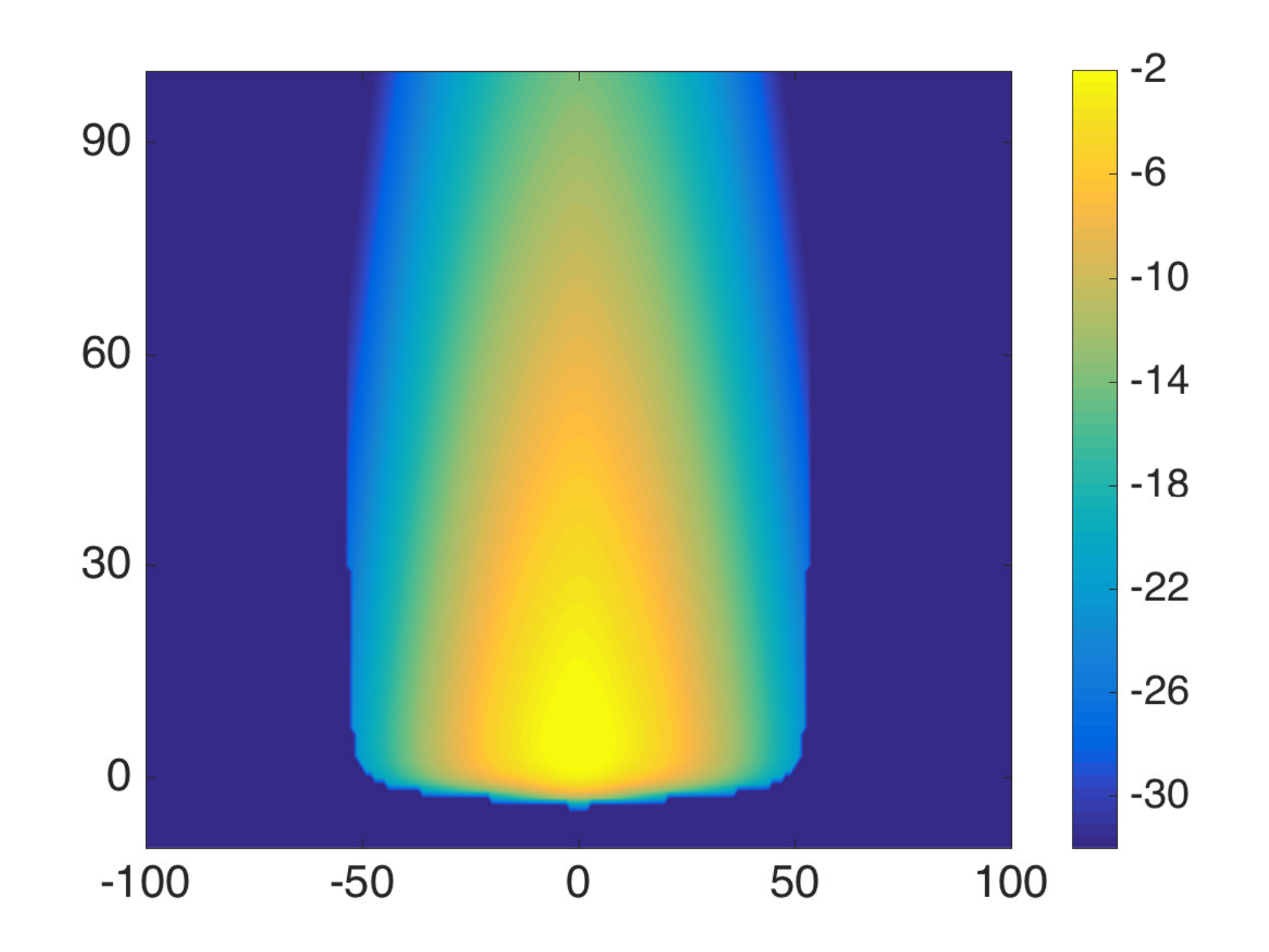}\includegraphics[scale=0.3]{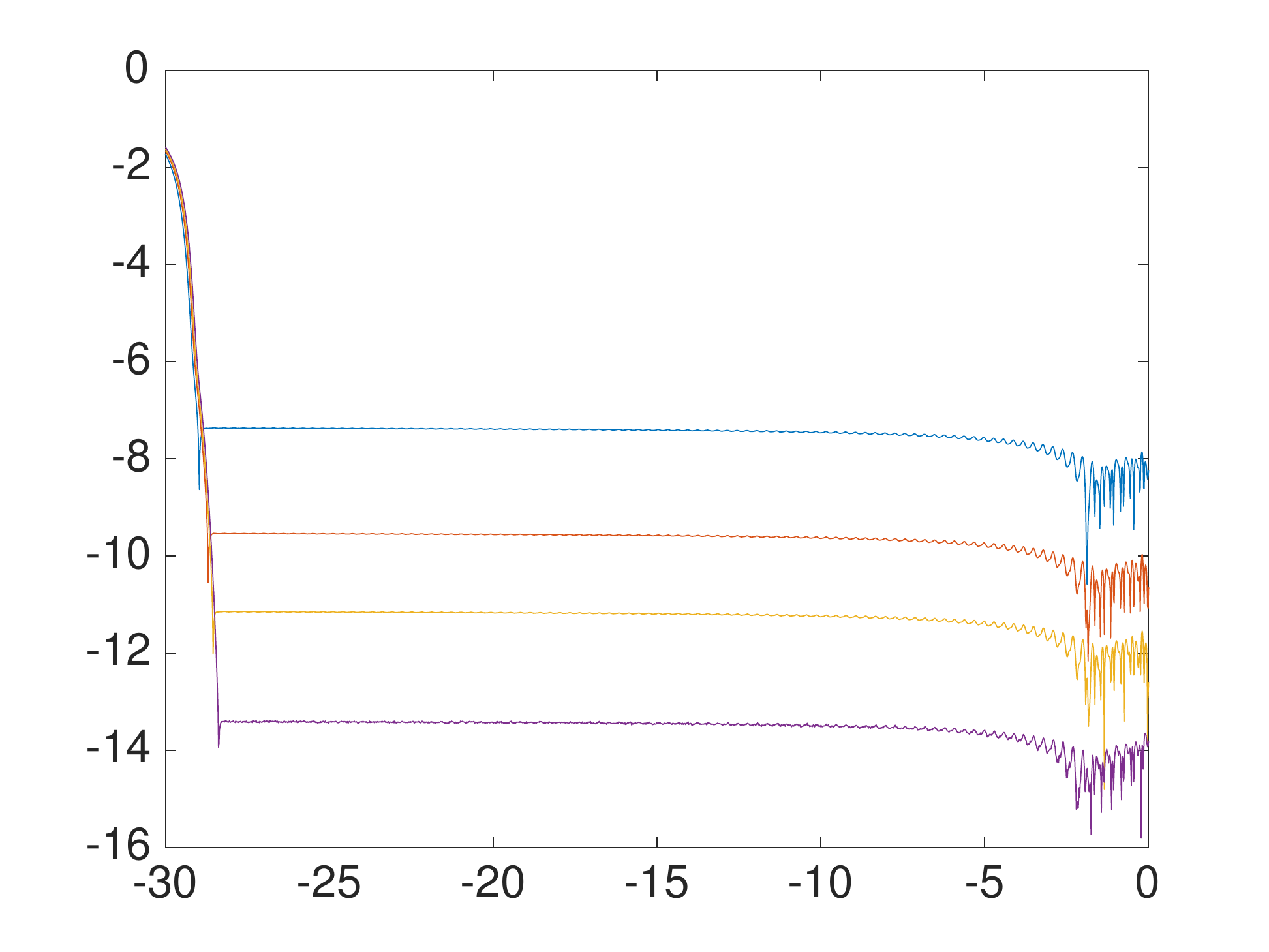}
\par\end{centering}

\centering{}\caption{\label{fig:example010101-2} Logarithm (base 10) of GMRA coefficients
of the PDF $p_{W}$ in Example~\ref{sub:example010101} (left) and
the relative error curves \eqref{eq:error(x)-1} shown over the interval
$\left[10^{-30},0\right]$ for different parameters $\alpha$ in the
GMRA \eqref{eq:Gaussian multiresolution basis}. The bottom curve
corresponds to $\alpha=0.25$, followed by $\alpha=0.3$, $\alpha=0.35$
and $\alpha=0.4$, as we go up in the display, demonstrating resolution
up to the interval $\left[-10^{-28},10^{-28}\right]$.}
\end{figure}

\subsection{Examples of computing the PDFs of products of random variables with
various distributions}

In the next examples we use our algorithm to compute the PDFs of the
product of two independent random variables with different distributions.
In Section~\ref{sub:example2111} we compute the product of two independent
normal random variables with non-zero means. We comment on the impact
of the order of integrands in Section~\ref{sub:example6121}. In
Sections~\ref{sub:ExampleCauchy}-\ref{sub:ExampleGumbel} we compute
the product of a random variable with either Cauchy, Laplace or Gumbel
distributions with a second normally distributed random variable.
Finally, in Section~\ref{sub:The-Laplace-and-Gumbel} we compute
the product of two independent random variables with Laplace and Gumbel
distributions where both distributions are approximated via Gaussian
mixtures as described in Section~\ref{sub:Representing-PDFs-in-GMRA}.

\subsubsection{\label{sub:example2111}Normal PDFs with non-zero means}

For the random variables $X\thicksim N(\mu_{x},\sigma_{x}^{2})$,
$Y\thicksim N(\mu_{y},\sigma_{y}^{2})$ with $\mu_{x}=2$, $\sigma_{x}=1$
and $\mu_{y}=1$, $\sigma_{y}=1$, we compute the PDF $p_{Z}$ of
their product, $Z=XY$. Figure~\ref{fig:Example2111-1} shows the
PDFs of the random variables $X$, $Y$ and $Z$. As far as we know,
there is no analytic expression for the result which we compute with
accuracy $\approx10^{-13}$. Using $100$ scales, we resolve the logarithmic
singularity at zero within an interval as small as $\left[-10^{-26},10^{-26}\right]$
(see Figure~\ref{fig:Example2111-2}). We observe that to compute
the PDF of $Z$ to such accuracy and resolution via a Monte-Carlo
method is not feasible. Moreover, we obtain the PDF of $Z$ in a functional
form which can be used for further computations. In contrast, a Monte-Carlo
method using $10^{8}$ samples and $10^{3}$ bins of approximate size
$0.04$ yields just the histogram shown in Figure~\ref{fig:Example2111-3}. 

\begin{figure}[h]
\begin{centering}
\includegraphics[scale=0.3]{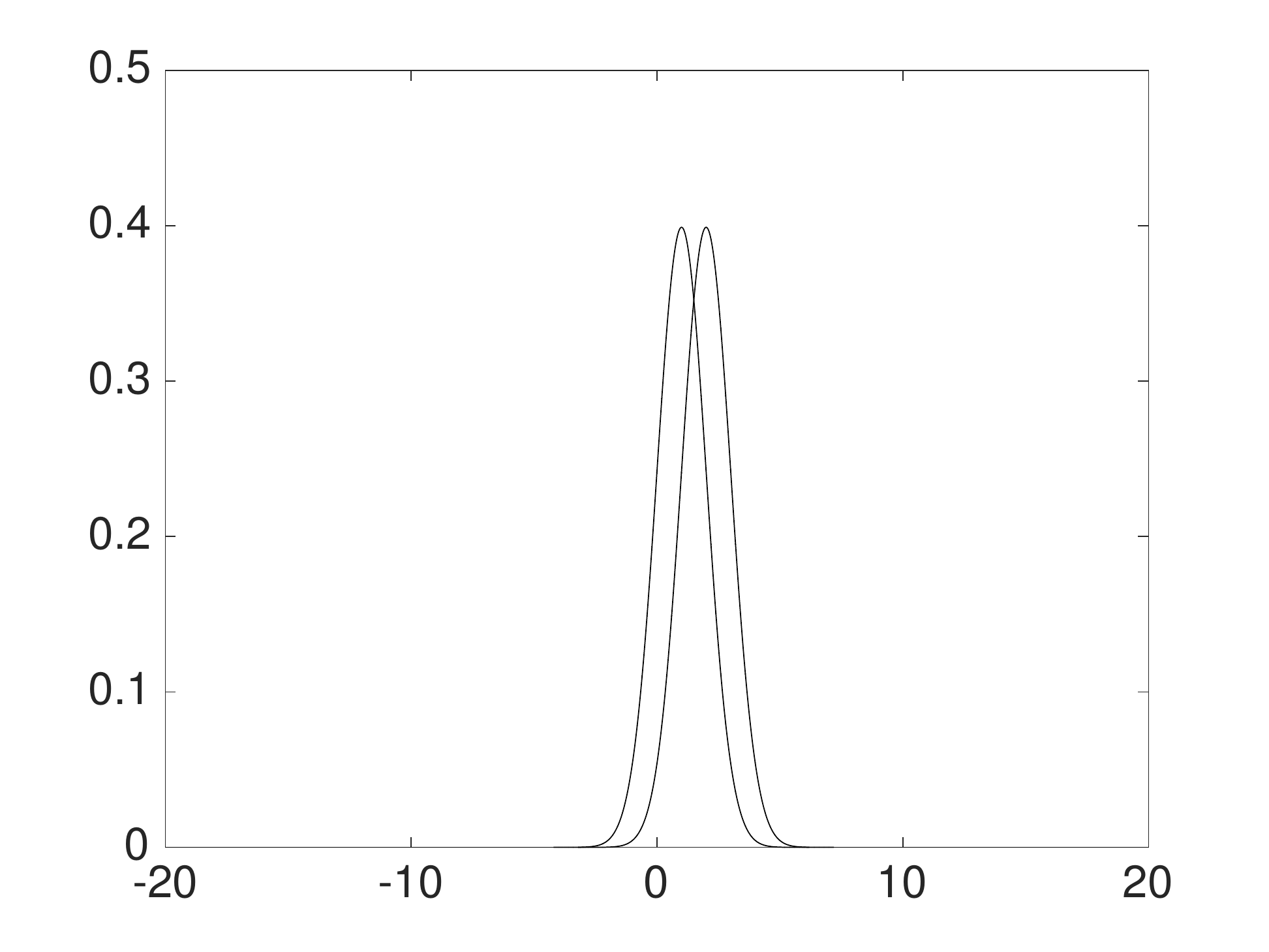}\includegraphics[scale=0.3]{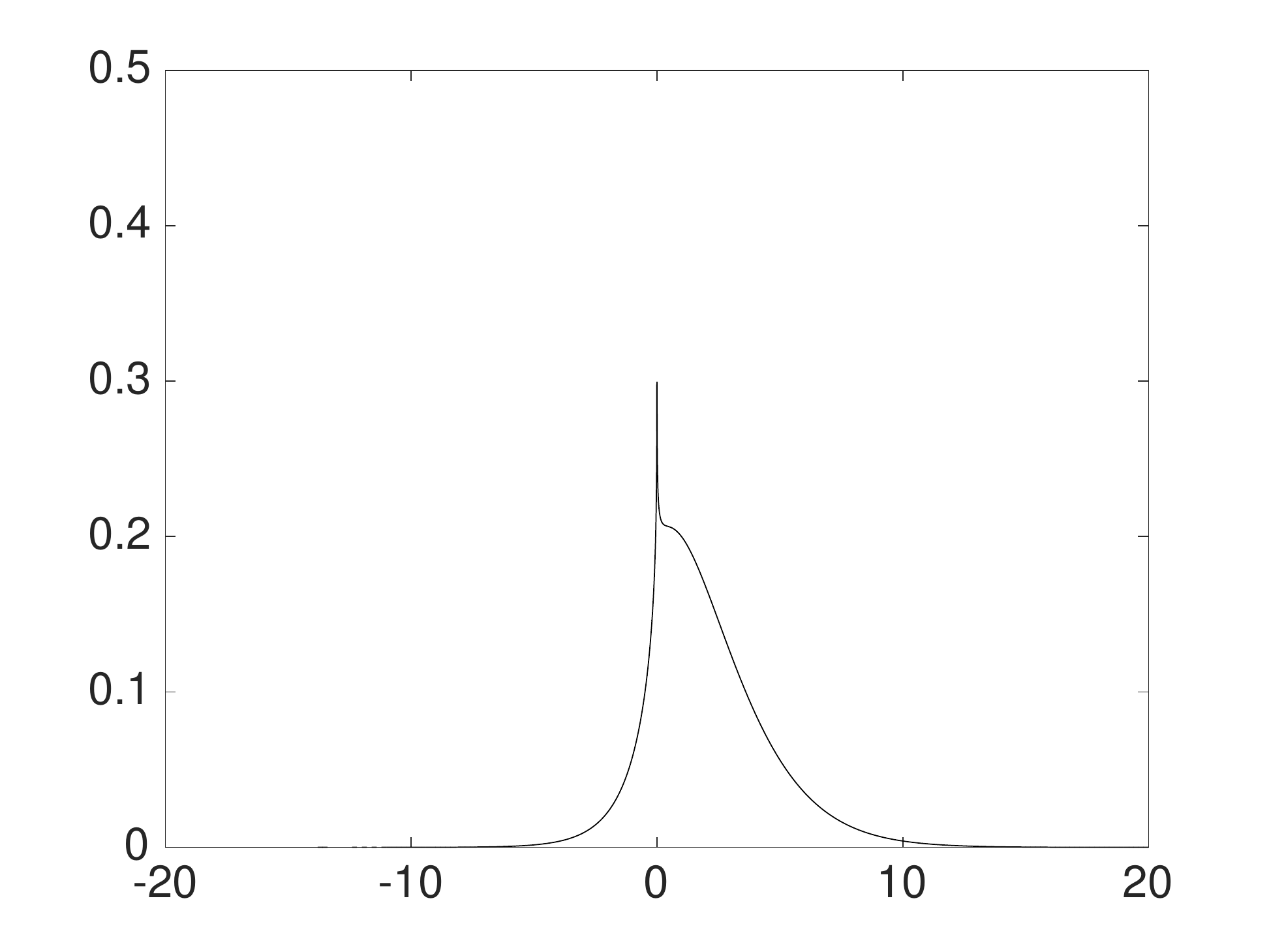}
\par\end{centering}

\centering{}\caption{\label{fig:Example2111-1} PDFs of the random variables $X$ and $Y$
of Example~\ref{sub:example2111} (left) and the computed PDF $p_{Z}$
of Example~\ref{sub:example2111} (right).}
\end{figure}

\begin{figure}[h]
\begin{centering}
\includegraphics[scale=0.3]{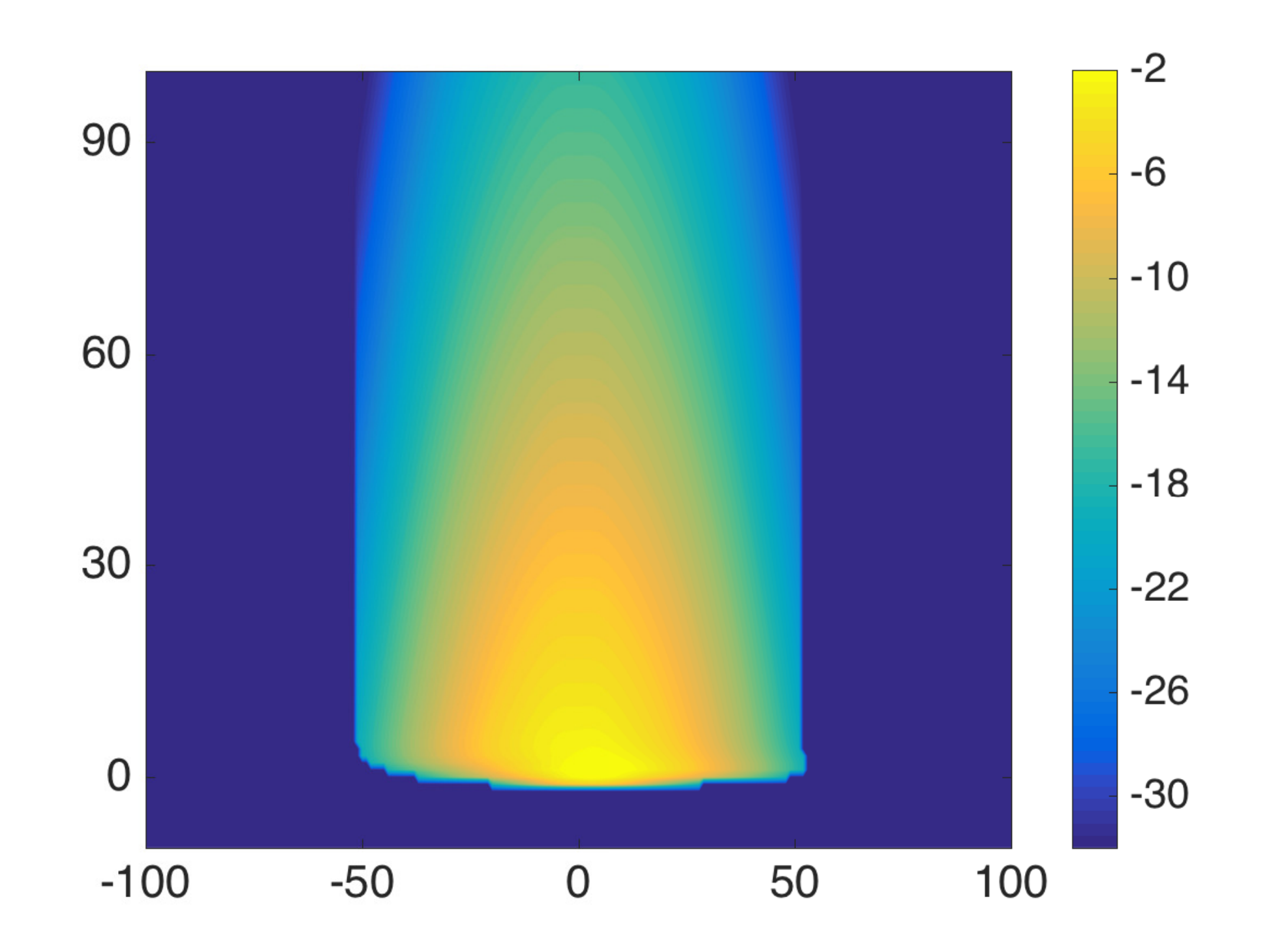}\includegraphics[scale=0.3]{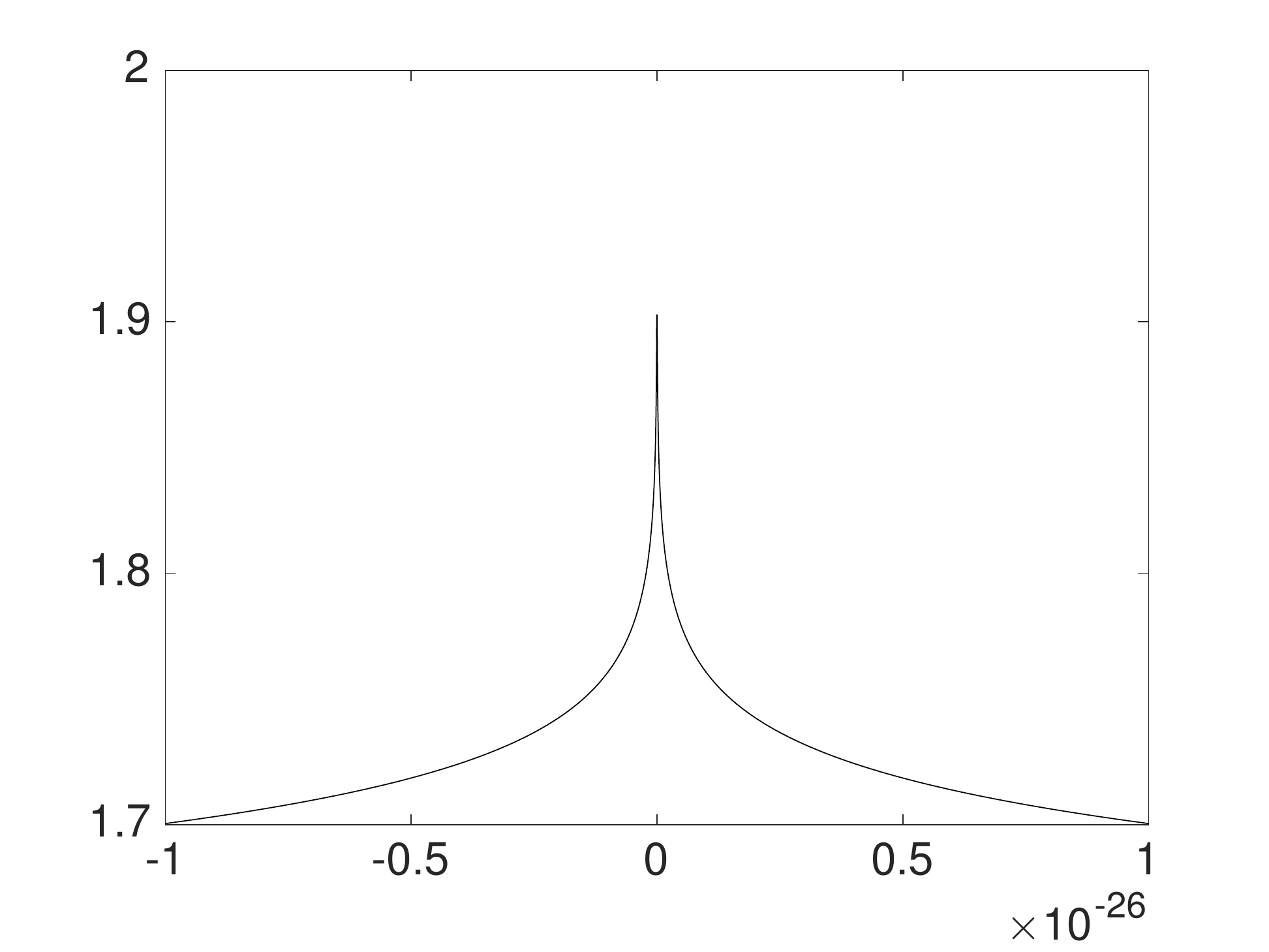}
\par\end{centering}

\centering{}\caption{\label{fig:Example2111-2} Logarithm (base 10) of GMRA coefficients
of the PDF $p_{Z}$ in Example~\ref{sub:example2111} (left) and
logarithmic singularity of $p_{Z}$ resolved to the interval $\left[-2\cdot10^{-26},2\cdot10^{-26}\right]$.}
\end{figure}

\begin{figure}[H]
\includegraphics[scale=0.4]{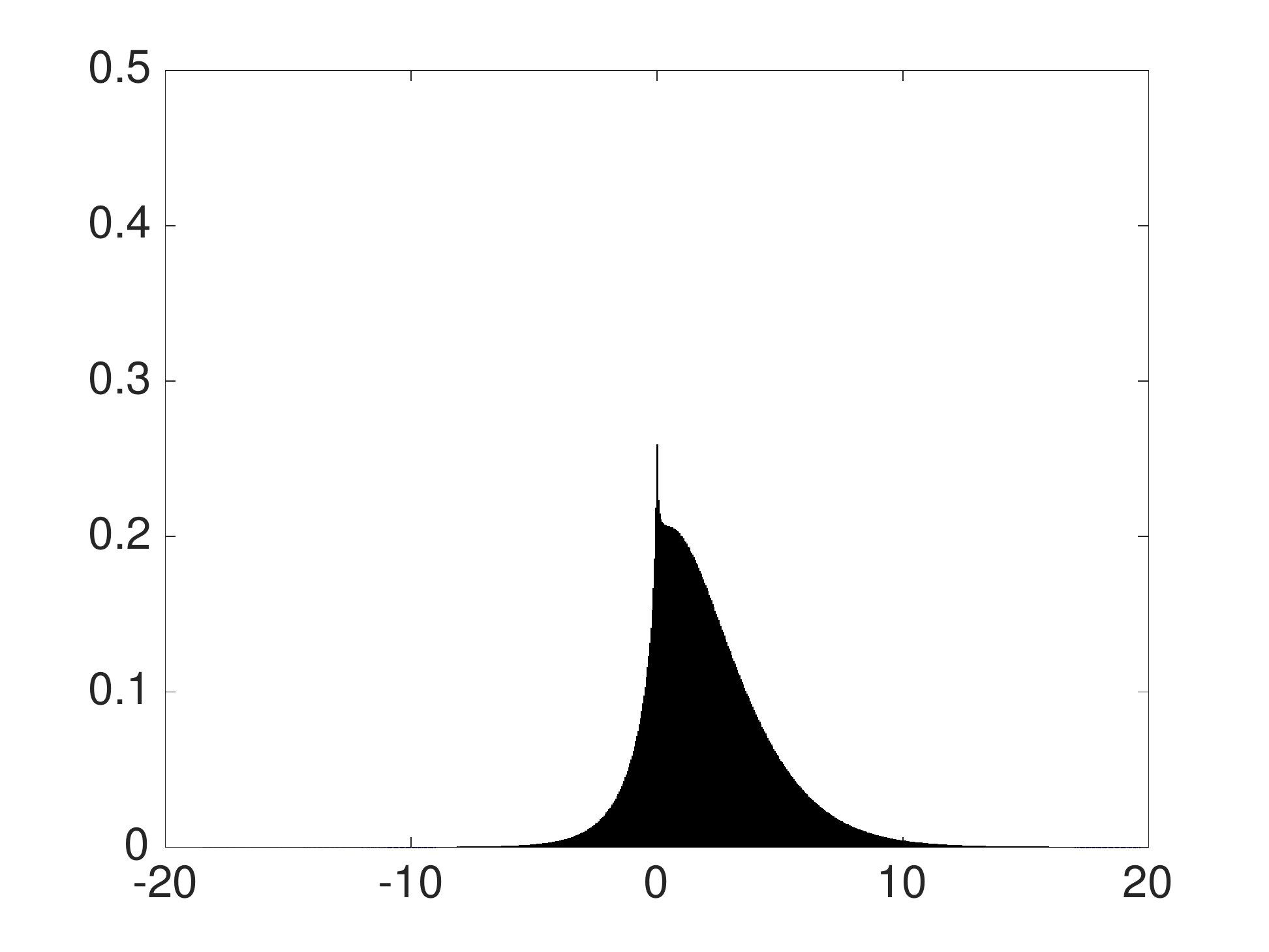}\caption{\label{fig:Example2111-3}A histogram of the distribution of Example~\ref{sub:example2111}
computed with $10^{8}$ samples using $10^{3}$ bins of approximate
size $0.04$. The logarithmic singularity at the origin is poorly
resolved (cf. the resolution of our method illustrated in Figure~\ref{fig:Example2111-2}).}
\end{figure}

\subsubsection{\label{sub:example6121}Order of the integrands when computing with
normal PDFs with non-zero means}

We demonstrate the effect of the order of factors on the coefficients
of the representation of the product as described in Remark~\ref{Ordering remark}.
We compute the PDFs of $Z_{1}=XY$ and $Z_{2}=YX$, where $X\sim N(6,1)$
and $Y\sim N(2,1)$, and display the results in Figures~\ref{fig:Example6121-1}
and \ref{fig:Example6121-3}. While the values of the PDFs $p_{Z_{1}}$
and $p_{Z_{2}}$ are the same within the accuracy of computation,
the coefficients of $p_{Z_{1}}$ and $p_{Z_{2}}$ do depend on the
order of the factors as illustrated in Figure~\ref{fig:Example6121-3}.

\begin{figure}[h]
\begin{centering}
\includegraphics[scale=0.3]{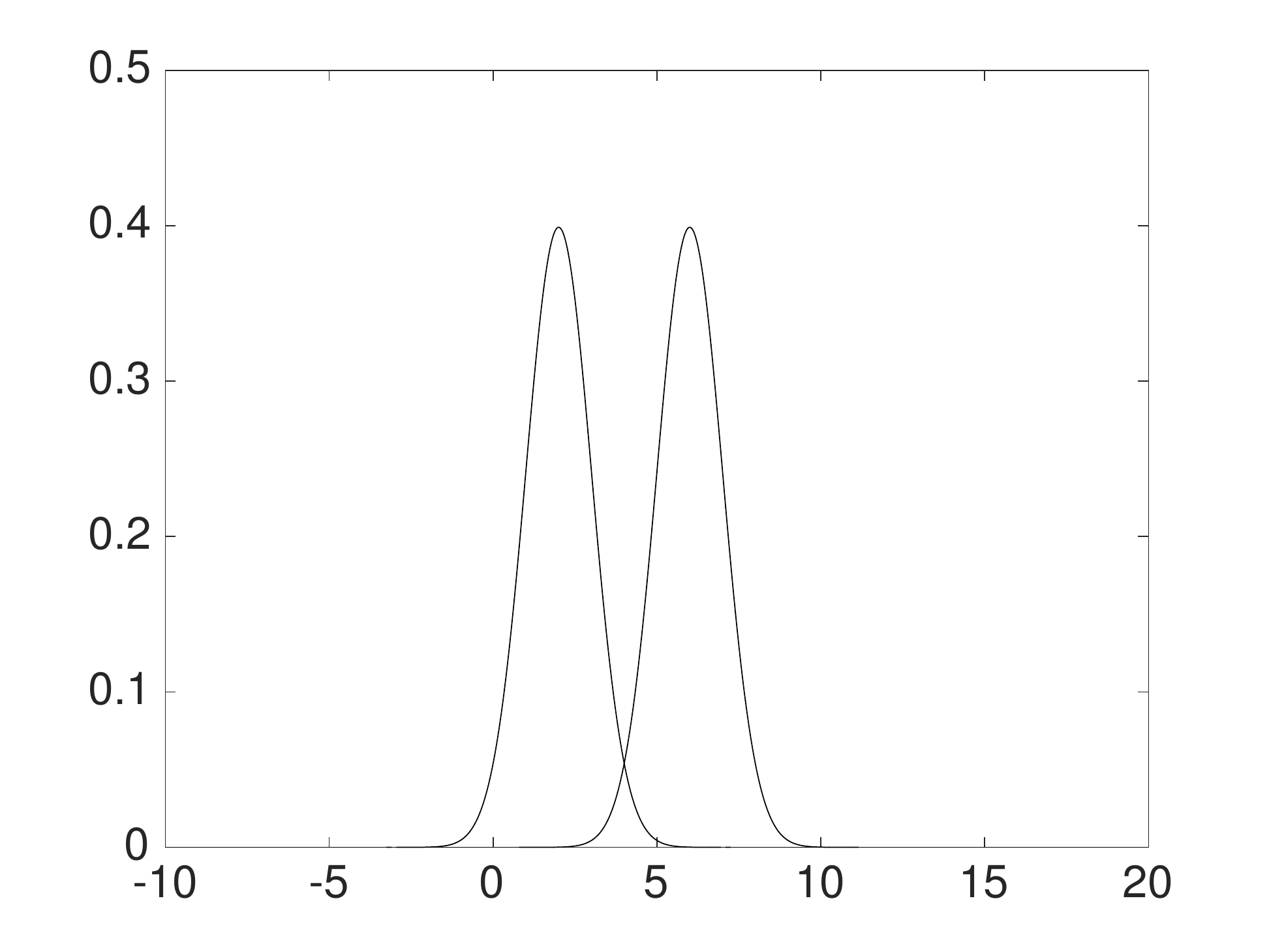}\includegraphics[scale=0.3]{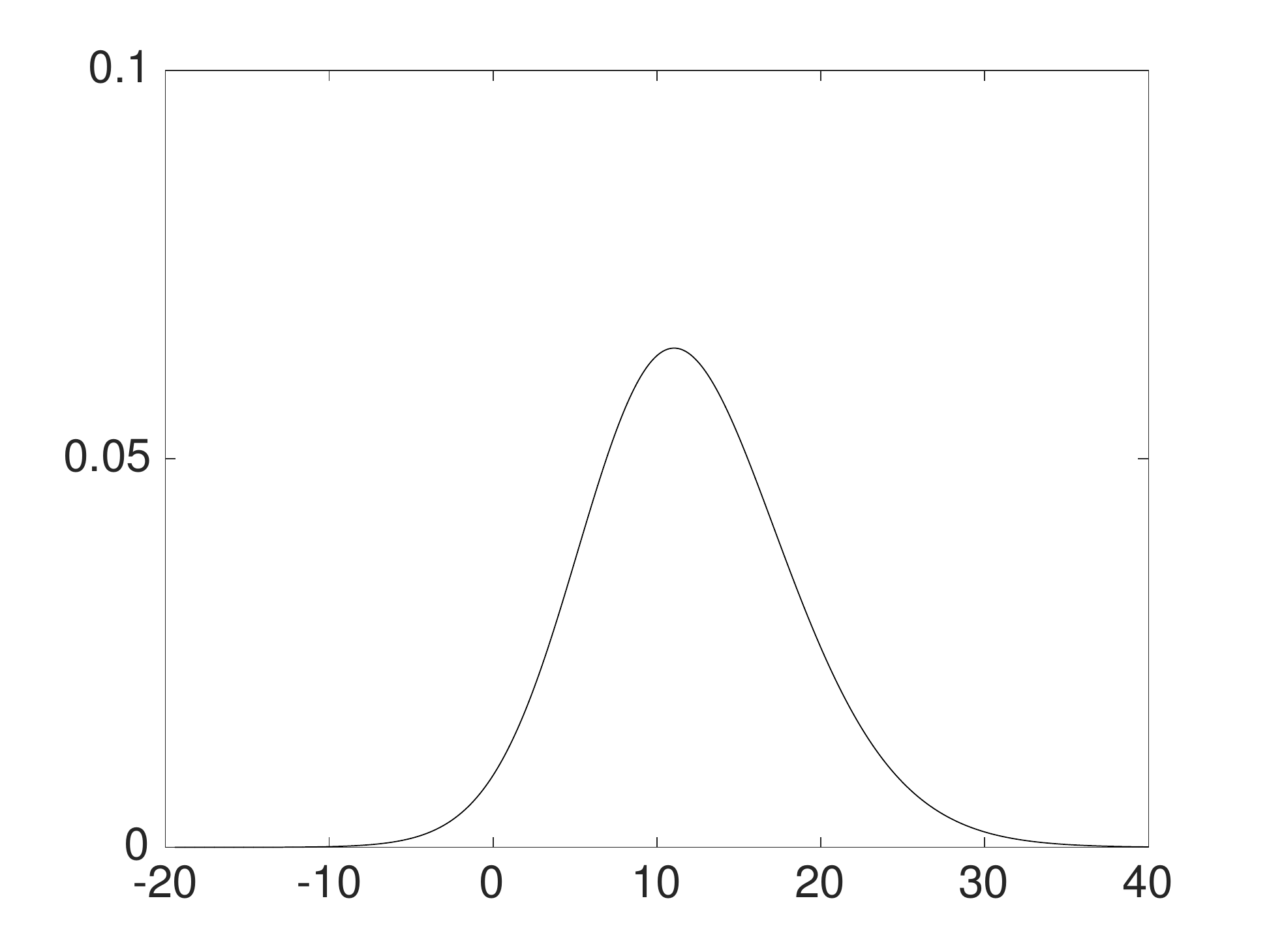}
\par\end{centering}

\centering{}\caption{\label{fig:Example6121-1} PDFs of random variables $X$ and $Y$
in Example~\ref{sub:example6121} (left) and computed product PDF
$p_{Z_{1}}$ (right).}
\end{figure}

\begin{figure}[H]
\begin{centering}
\includegraphics[scale=0.3]{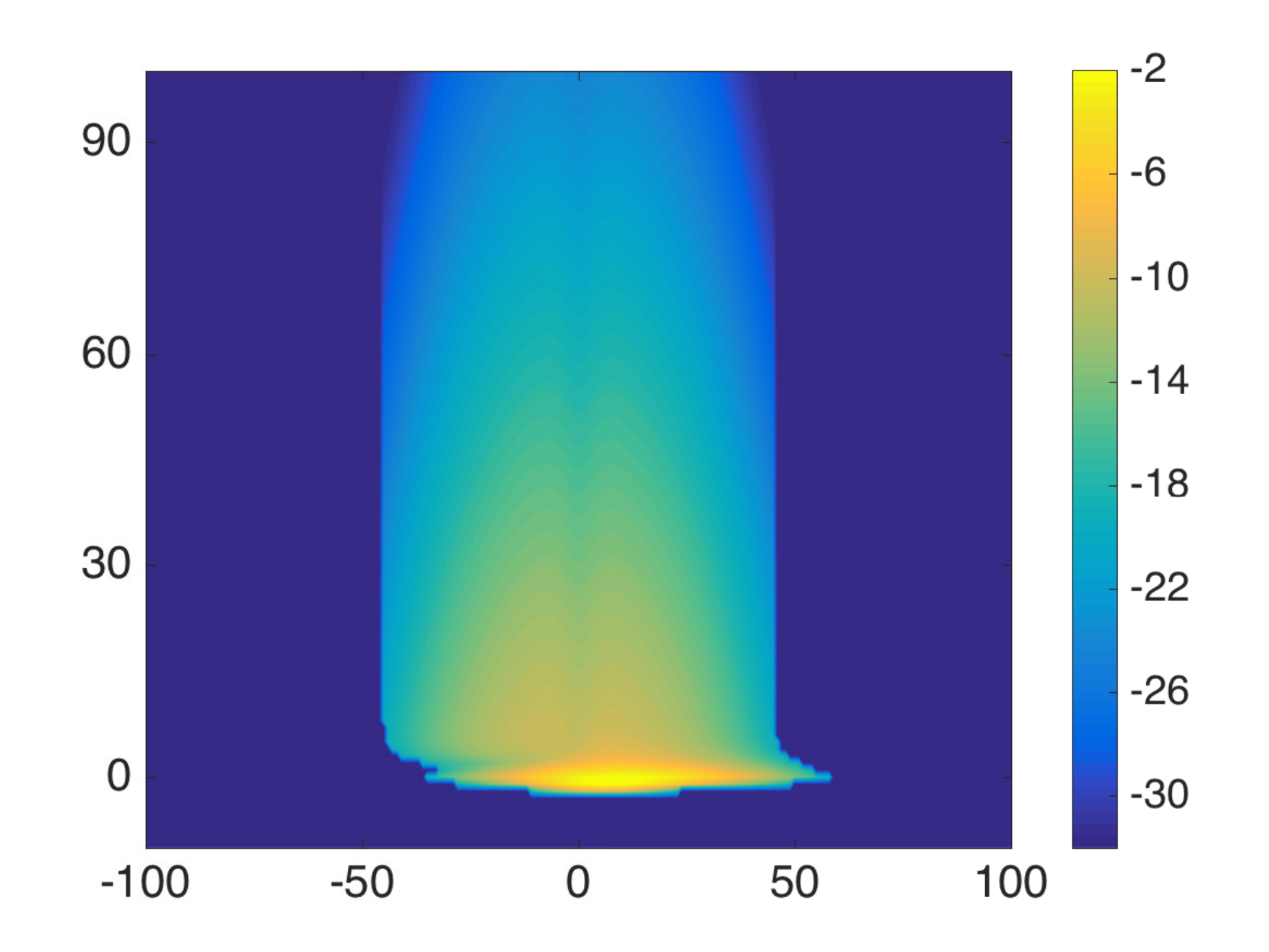}\includegraphics[scale=0.3]{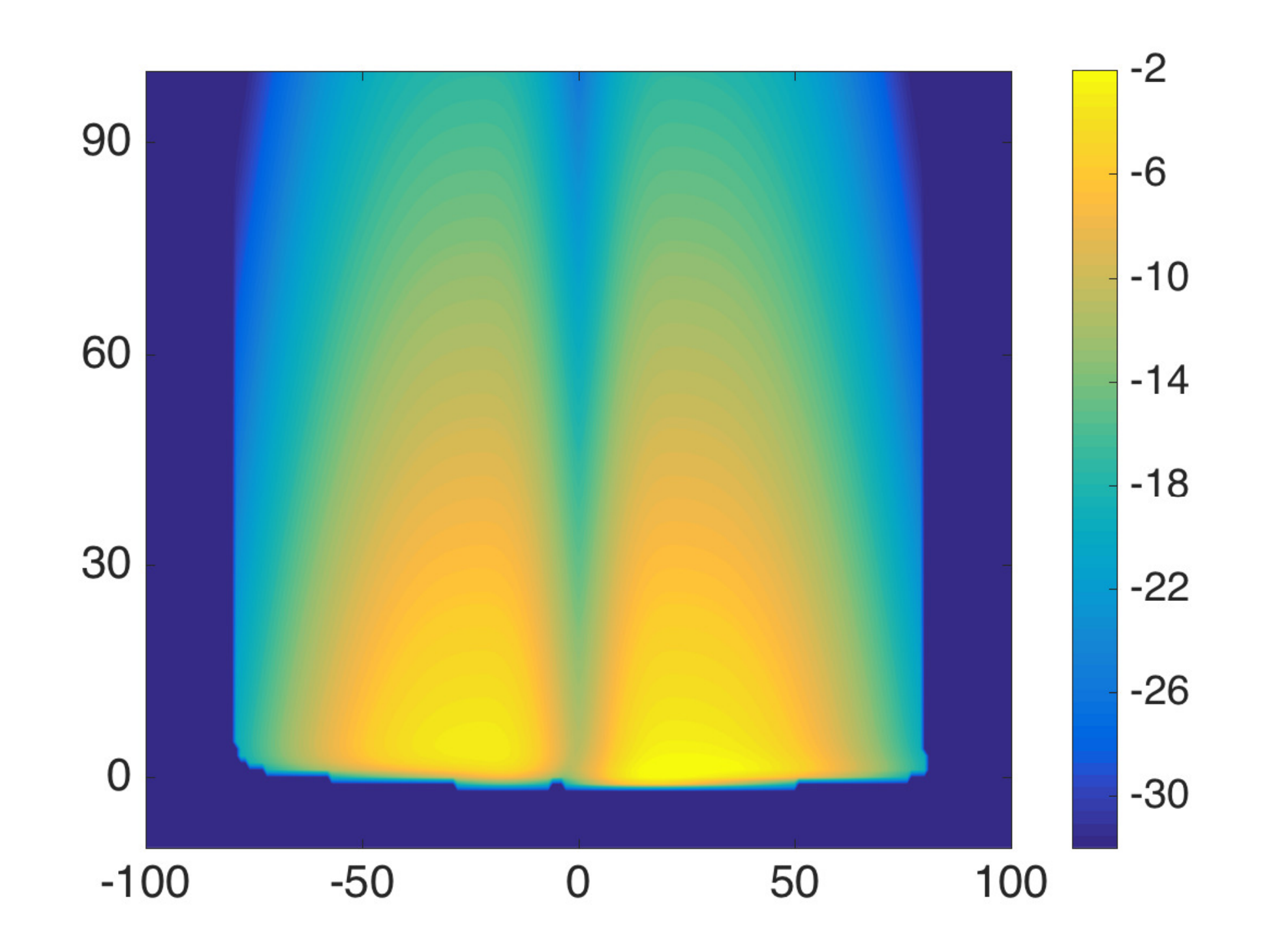}
\par\end{centering}

\centering{}\caption{\label{fig:Example6121-3}Logarithm (base 10) of GMRA coefficients
of the PDF $p_{Z_{1}}$ (left) and logarithm (base 10) of GMRA coefficients
of the PDF $p_{Z_{2}}$ (right).}
\end{figure}

\subsubsection{\label{sub:ExampleCauchy}Normal distribution and Cauchy distribution}

We now turn to computing the PDF of the product of random variables
where one of them has a non-Gaussian distribution. First, we consider
the PDF of the Cauchy distribution

\[
f\left(x;x_{0},\gamma\right)=\frac{1}{\pi\gamma}\left(\frac{\gamma^{2}}{\left(x-x_{0}\right)^{2}+\gamma^{2}}\right),
\]
where $x_{0}$ is the location parameter and $\gamma$ is the scale
parameter. The Cauchy distribution is defined on the real line and
is an example of a heavy-tailed distribution for which the moments
are infinite. In this example, we compute the PDF $p_{Z}$ of $Z=XY$,
where $X\sim f\left(x;-2,1\right)$ and $Y\sim N\left(1.5,1\right)$.
We used $40$ ``negative scales'', i.e. $j=-40,\dots100$ in order
to capture the heavy-tail behavior of the product distribution (which
also has no finite moments). By using so many coarse scales, the integral
of the resulting PDF over the real line differs from $1$ by approximately
$\approx10^{-14}$. PDFs of the factors $X$ and $Y$ and the product
$Z$ are illustrated in Figure~\ref{fig: Cauchy-1 }. In Figure~\ref{fig: Cauchy-2},
we display the GMRA coefficients of $p_{Z}$ and resolve the singularity
at zero of $p_{Z}$ within the interval $\left[-10^{-28},10^{-28}\right]$. 

\begin{figure}[H]
\begin{centering}
\includegraphics[scale=0.3]{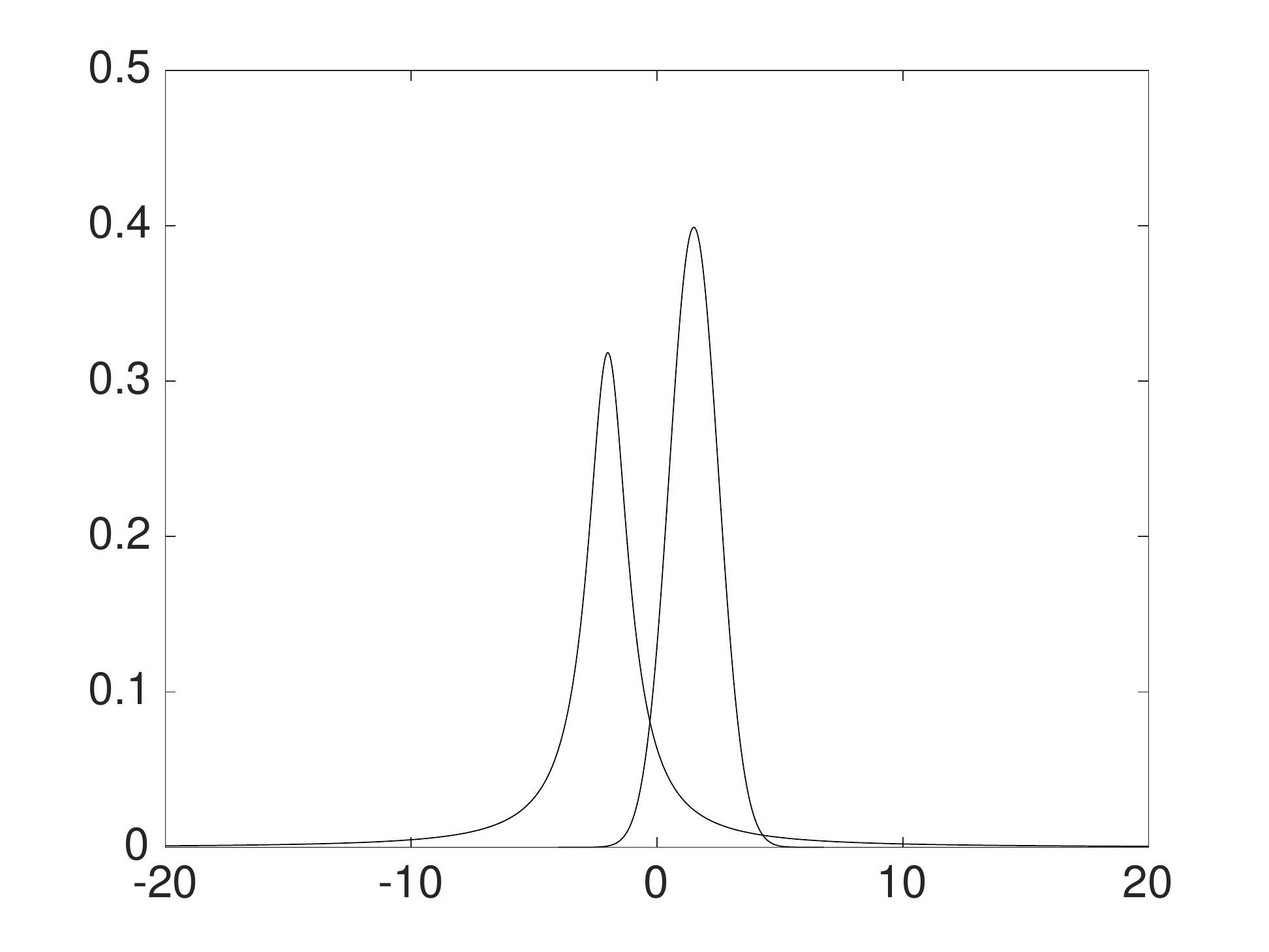}\includegraphics[scale=0.3]{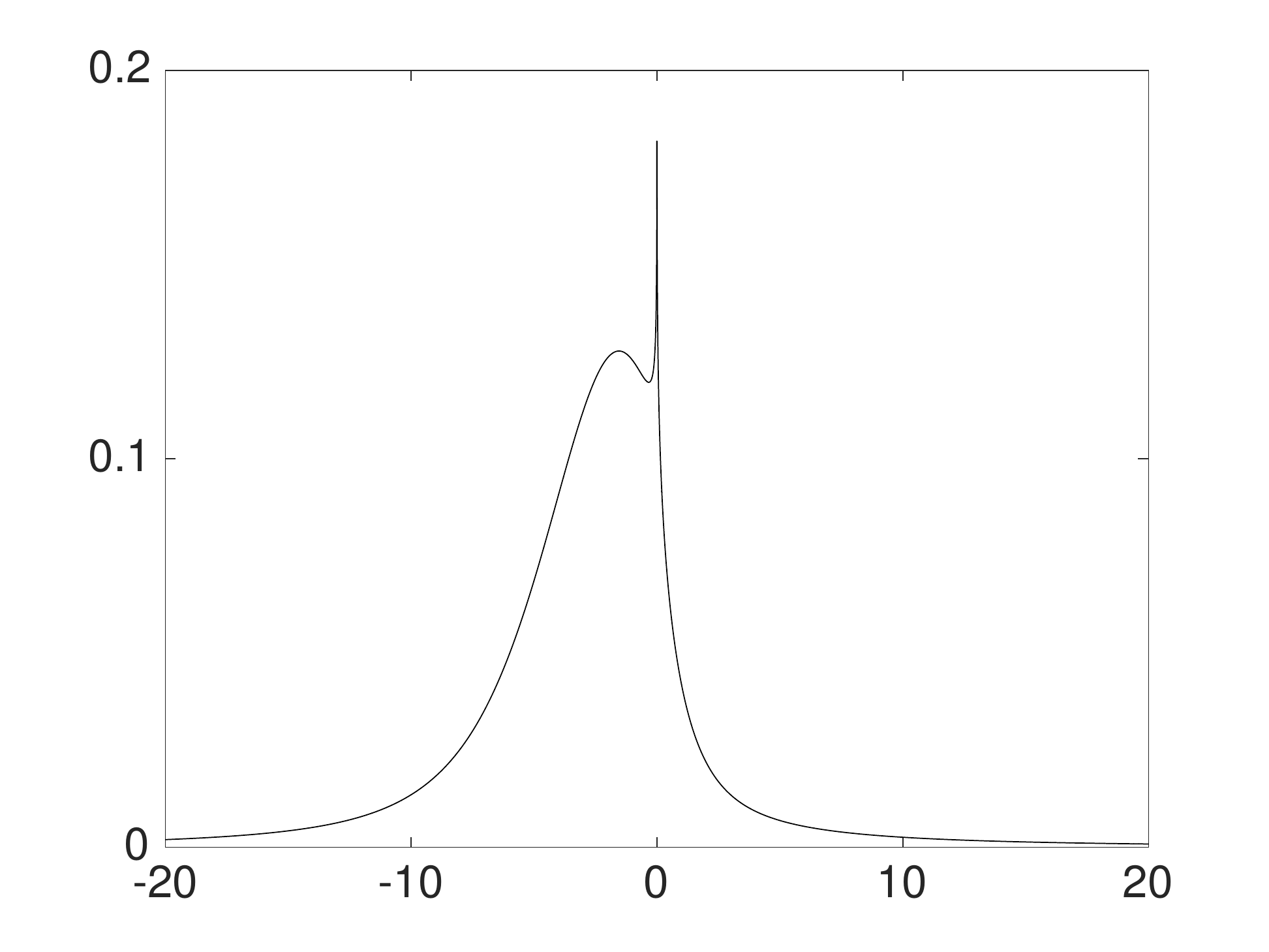}
\par\end{centering}

\centering{}\caption{\label{fig: Cauchy-1 } The PDFs of the random variables $X$ and
$Y$ in Example~\ref{sub:ExampleCauchy} (left) and computed product
PDF $p_{Z}$ (right).}
\end{figure}

\begin{figure}[H]
\begin{centering}
\includegraphics[scale=0.3]{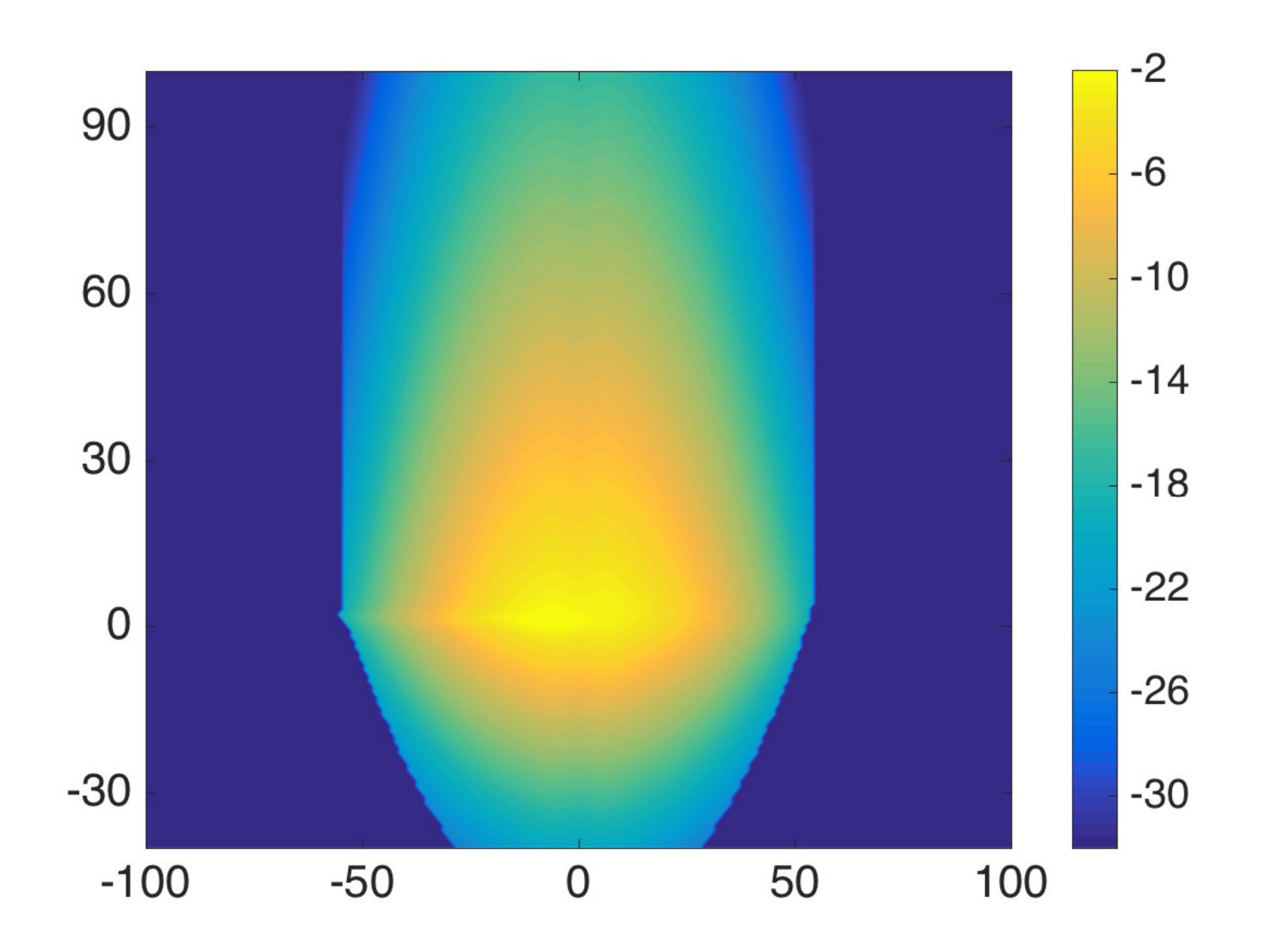}\includegraphics[scale=0.3]{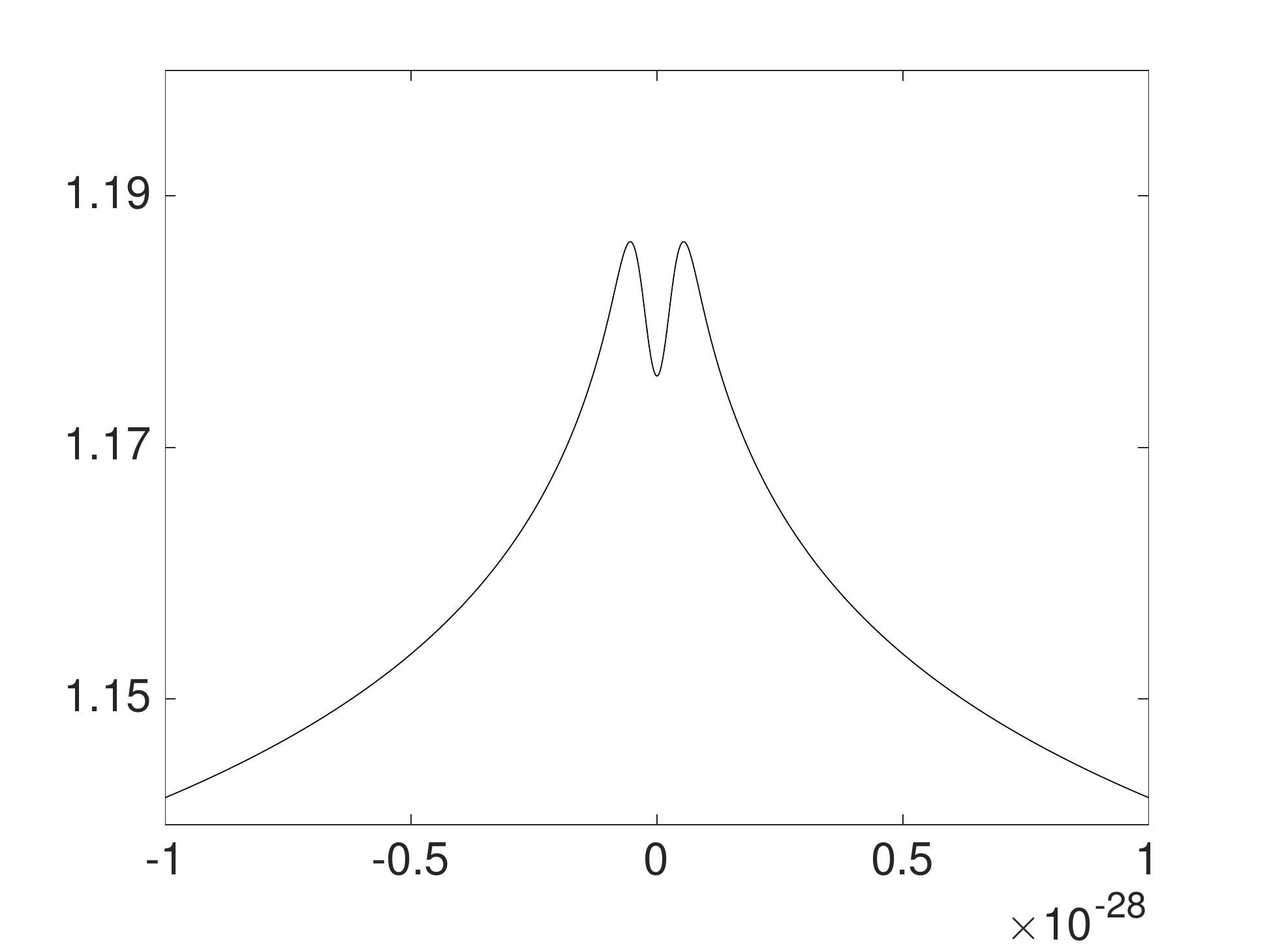}
\par\end{centering}

\centering{}\caption{\label{fig: Cauchy-2} Logarithm (base 10) of GMRA coefficients of
the PDF $p_{Z}$ in Example~\ref{sub:ExampleCauchy} (left) and logarithmic
singularity of $p_{Z}$ resolved to the interval $\left[-10^{-28},10^{-28}\right]$
(right). }
\end{figure}

\subsubsection{\label{sub:ExampleLaplace}Normal distribution and Laplace distribution}

We consider the PDF of the Laplace distribution

\begin{equation}
f\left(x;\mu,b\right)=\frac{1}{2b}e^{-\frac{\left|x-\mu\right|}{b}},\label{eq:Laplace distribution with parameters}
\end{equation}
where $\mu$ is the location parameter and $b>0$ is the scale parameter.
The density function, $f\left(x;\mu,b\right)$, has a cusp at $x=\mu$.
In this example, we compute the PDF $p_{Z}$ of the product $Z=XY$,
where $X\sim f(x;2,1)$ and $Y\sim N(1,1)$ and display these PDFs
in Figure \ref{fig: Laplace1}. In Figure~\ref{fig: Laplace2} we
display the GMRA coefficients of $p_{Z}$ and resolve, within the
interval $\left[-10^{-28},10^{-28}\right]$, the singularity of $p_{Z}$
at zero. Note that the resulting distribution does not have a singularity
at $x=\mu$ but has a singularity at zero as in the other examples. 

\begin{figure}[H]
\begin{centering}
\includegraphics[scale=0.3]{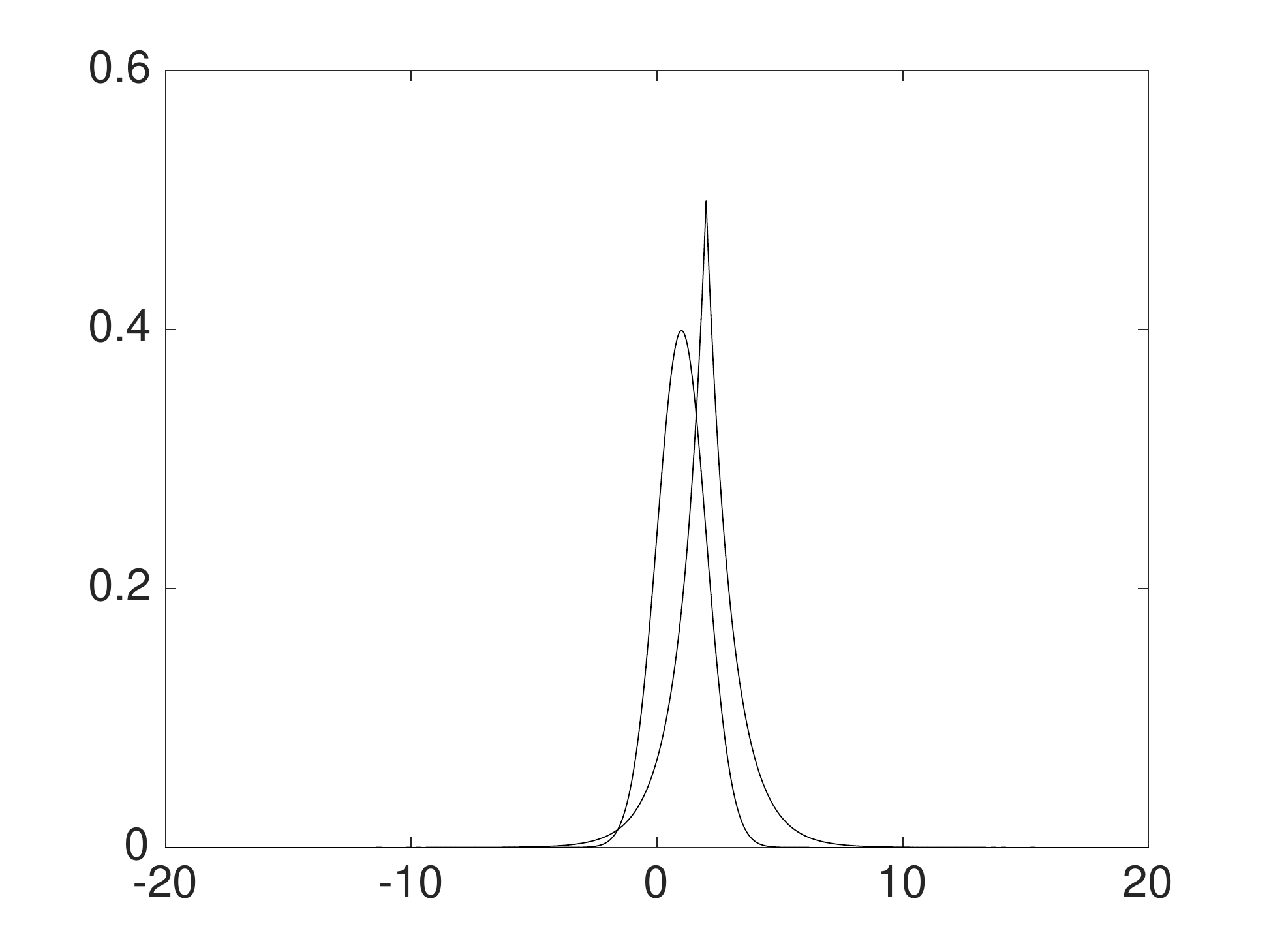}\includegraphics[scale=0.3]{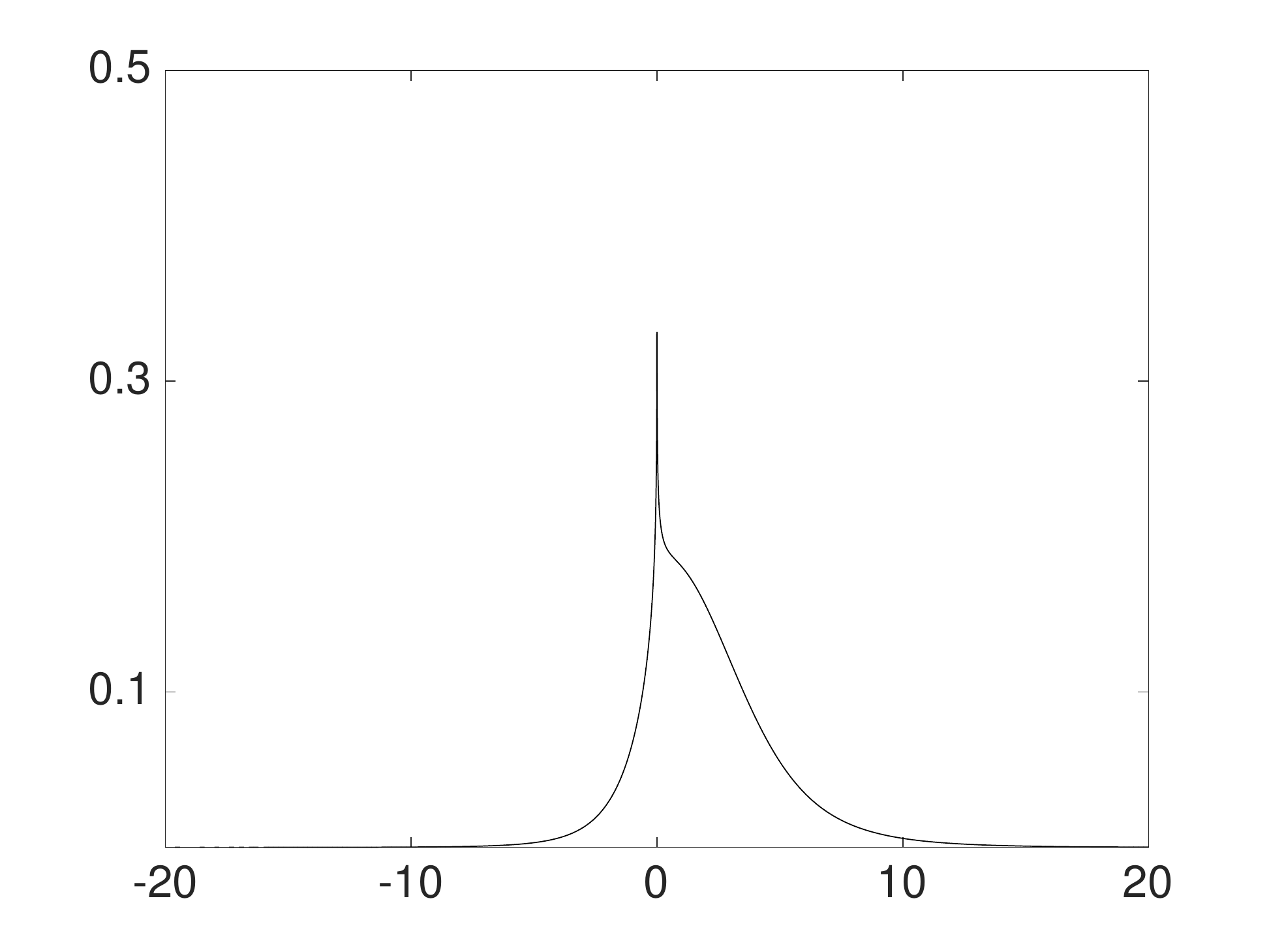}
\par\end{centering}

\centering{}\caption{\label{fig: Laplace1} The PDFs of the random variables $X$ and $Y$
in Example~\ref{sub:ExampleLaplace} (left) and computed product
PDF $p_{Z}$ (right). }
\end{figure}

\begin{figure}[H]
\begin{centering}
\includegraphics[scale=0.3]{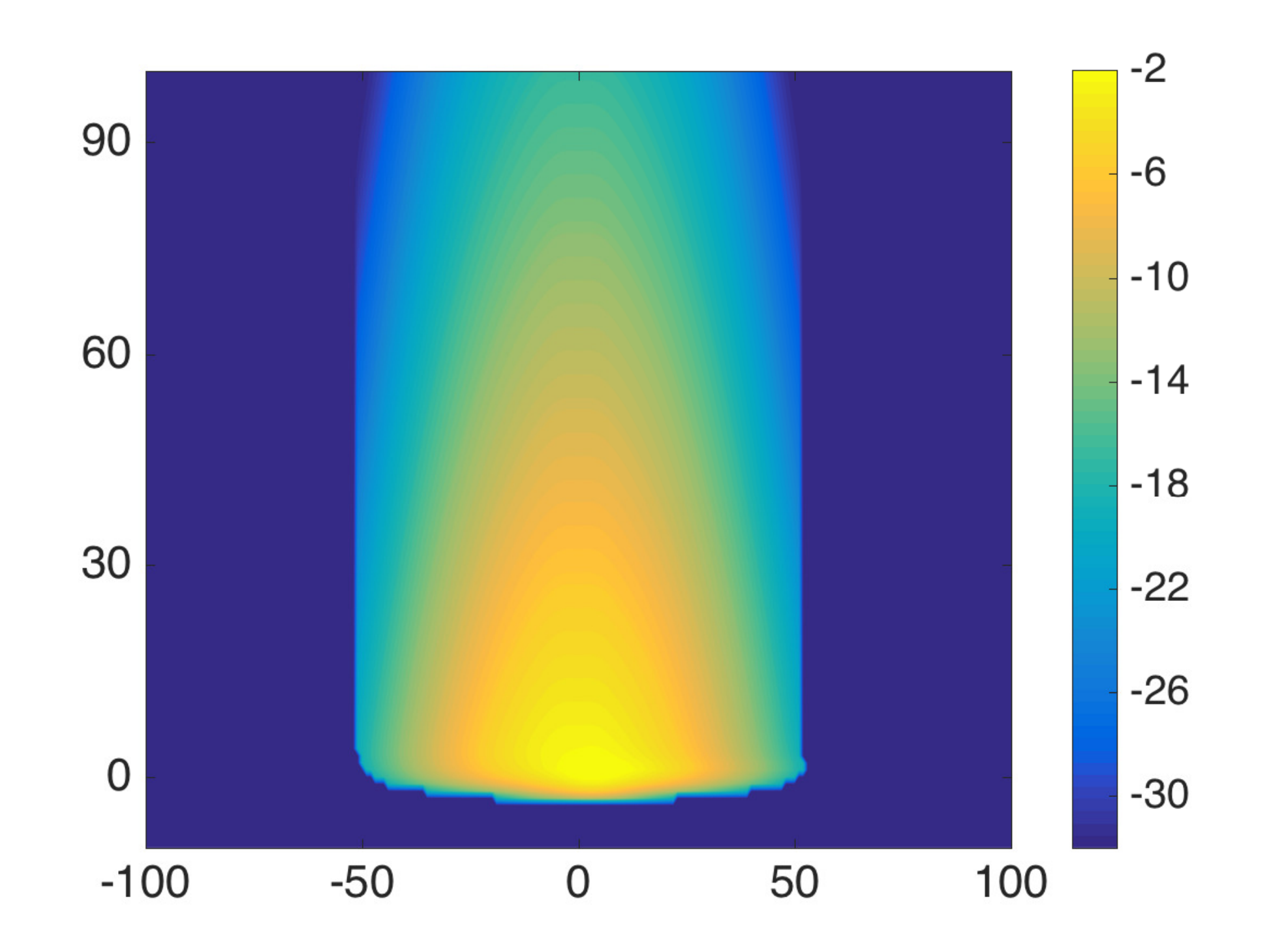}\includegraphics[scale=0.3]{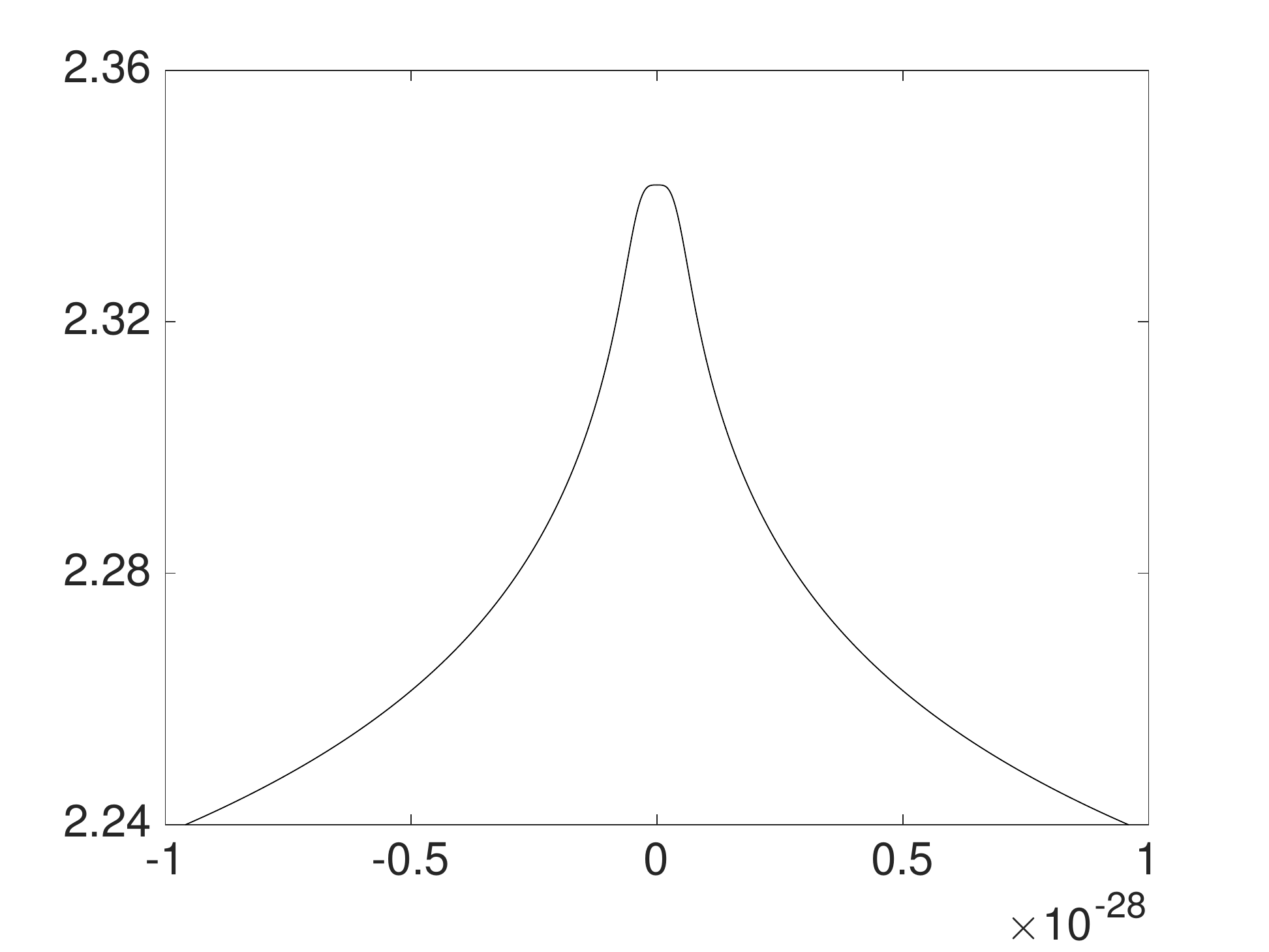}
\par\end{centering}

\centering{}\caption{\label{fig: Laplace2} Logarithm (base 10) of GMRA coefficients of
the PDF $p_{Z}$ in Example~\ref{sub:ExampleLaplace} and (b) logarithmic
singularity of $p_{Z}$ resolved to the interval $\left[-10^{-28},10^{-28}\right]$. }
\end{figure}

\subsubsection{\label{sub:ExampleGumbel}Normal distribution and Gumbel distribution}

We also consider the Gumbel distribution which is a special case of
the generalized extreme value distribution. Its PDF is not symmetric
and is given by 

\begin{equation}
g\left(x;\mu,\sigma\right)=\frac{1}{\sigma}e^{\frac{-\left(x-\mu\right)}{\sigma}-e^{\frac{-\left(x-\mu\right)}{\sigma}}},\label{eq:Gumbel Distribution with parameters}
\end{equation}
where $\mu$ is the location parameter and $\sigma$ is the scale
parameter. Here, we compute the PDF $p_{Z}$ of the product random
variable $Z=XY$, where $X\sim g(x;2,1.5)$ and $Y\sim N(1,3)$ and
display these PDFs in Figure~\ref{fig: Gumbel1}. In Figure~\ref{fig: Gumbel2}
we display the GMRA coefficients of $p_{Z}$ and resolve, within the
interval $\left[-10^{-28},10^{-28}\right]$, the singularity of $p_{Z}$
at zero . 

\begin{figure}[H]
\begin{centering}
\includegraphics[scale=0.3]{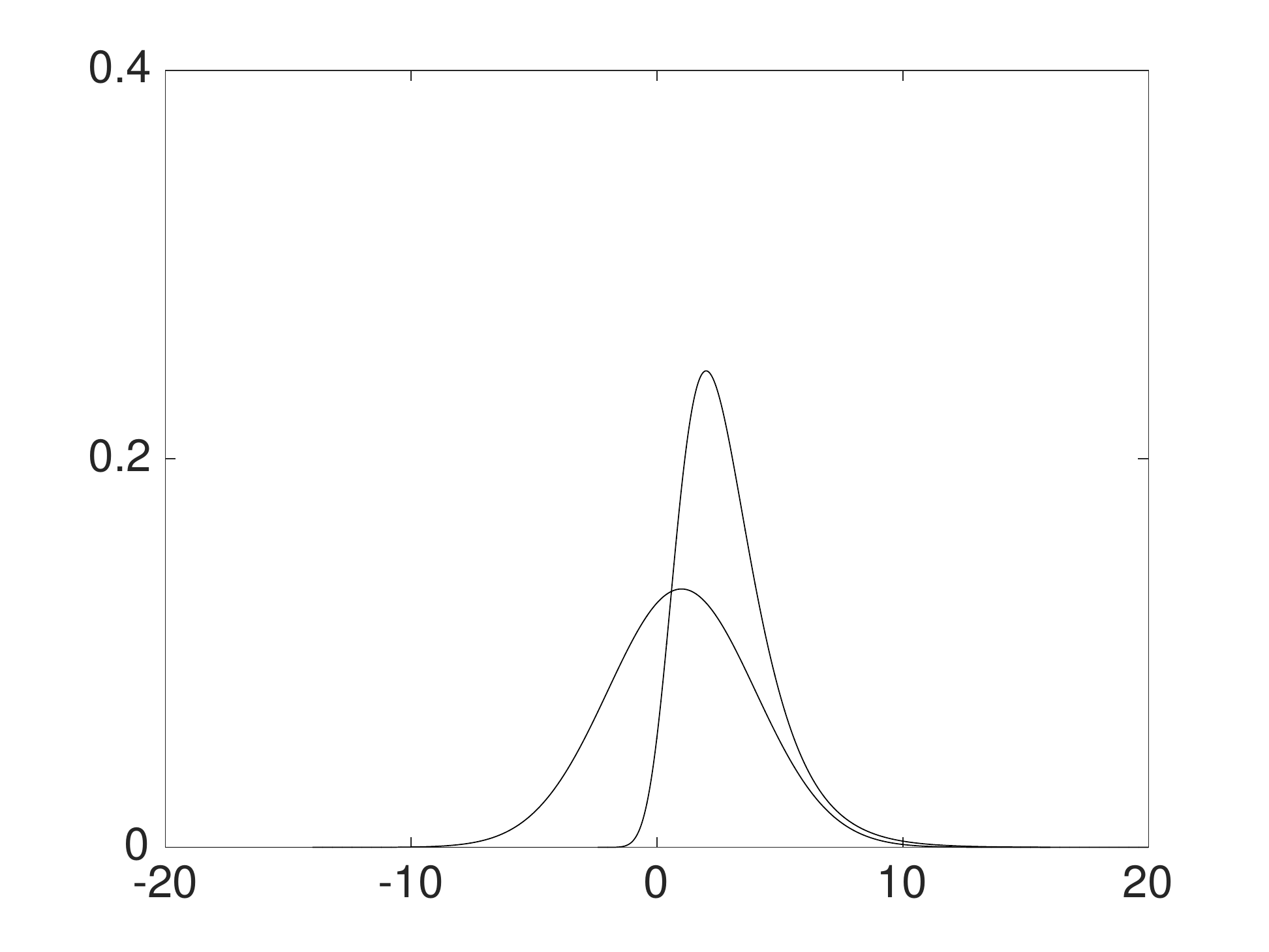}\includegraphics[scale=0.3]{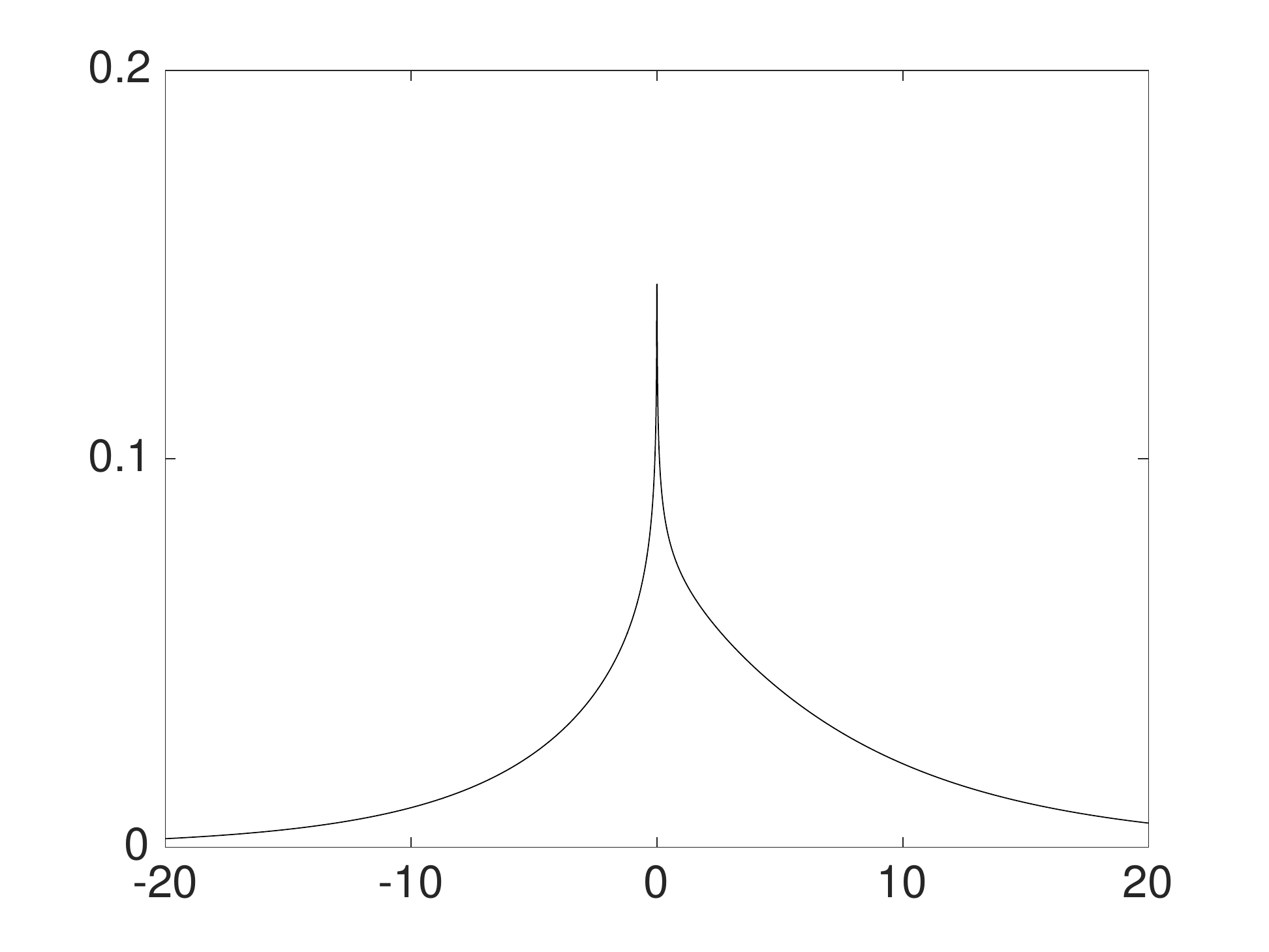}
\par\end{centering}

\centering{}\caption{\label{fig: Gumbel1} PDFs of random variables $X$ and $Y$ in Example~\ref{sub:ExampleGumbel}
(left) and computed product PDF $p_{Z}$ (right).}
\end{figure}

\begin{figure}[H]
\begin{centering}
\includegraphics[scale=0.3]{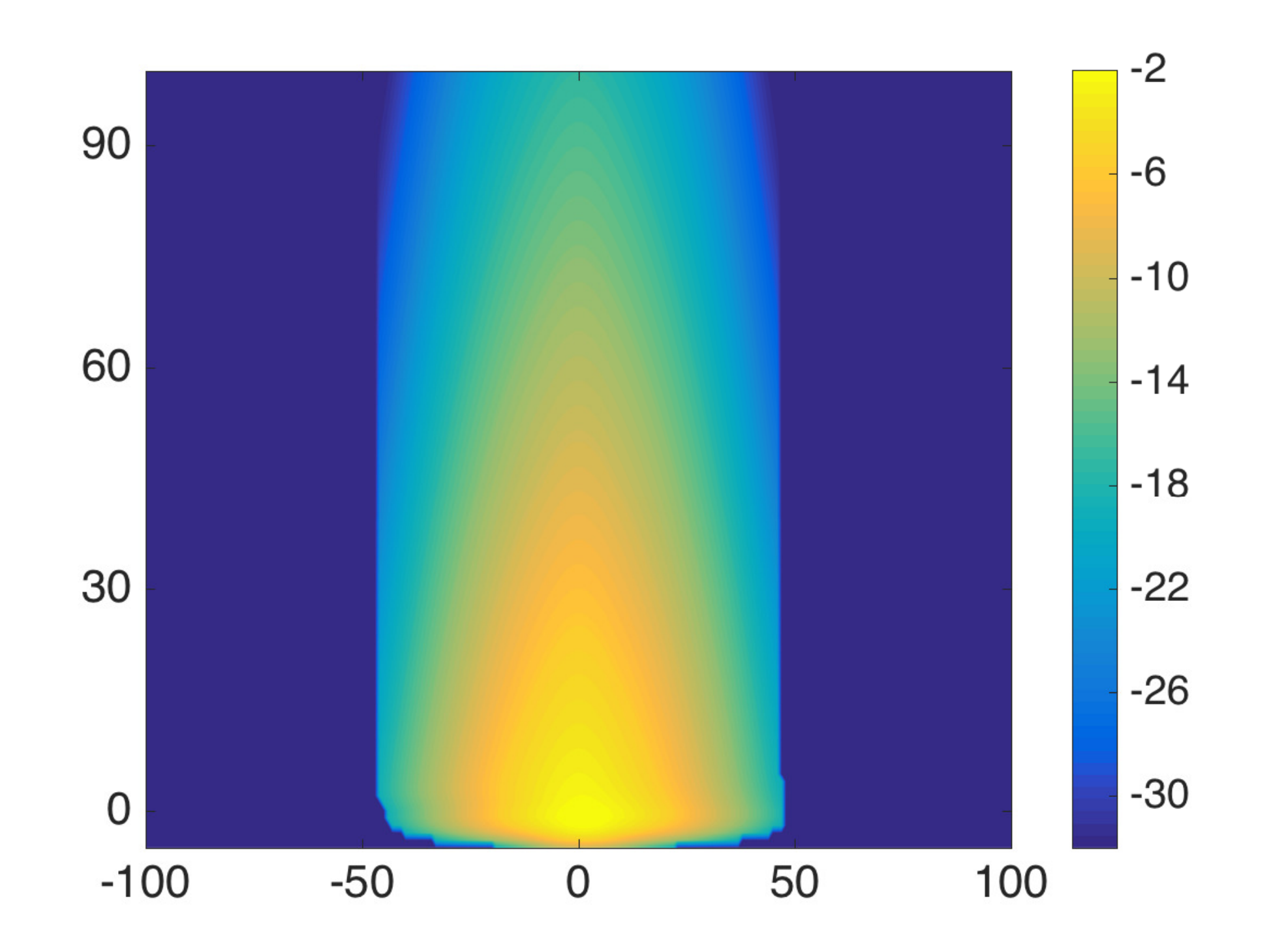}\includegraphics[scale=0.3]{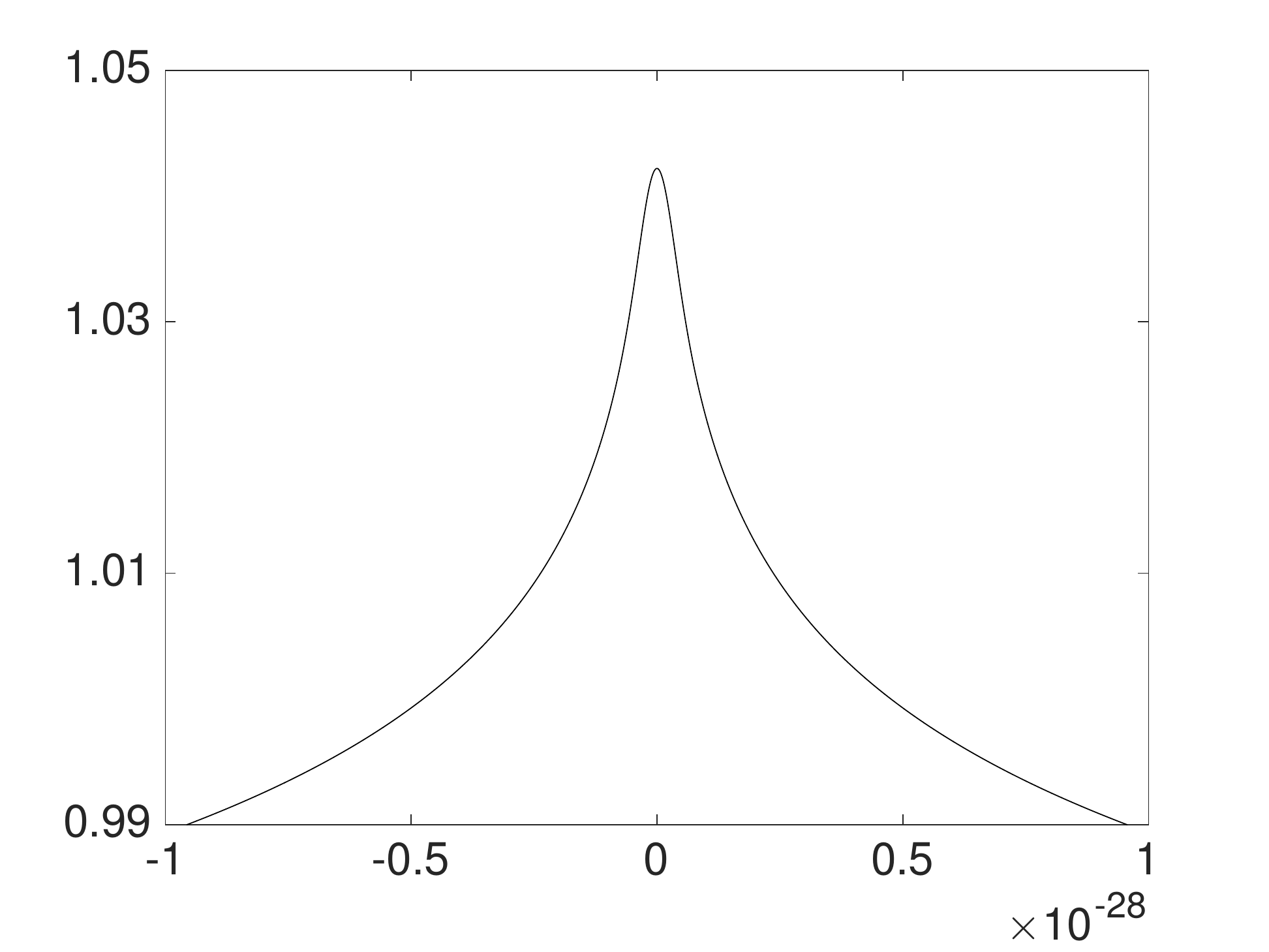}
\par\end{centering}

\centering{}\caption{\label{fig: Gumbel2} Logarithm (base 10) of GMRA coefficients of
$p_{Z}$ in Example~\ref{sub:ExampleGumbel} (left) and logarithmic
singularity of $p_{Z}$ resolved to the interval $\left[-10^{-28},10^{-28}\right]$
(right). }
\end{figure}

\subsubsection{\label{sub:The-Laplace-and-Gumbel}Laplace distribution and Gumbel
distribution}

In this example we compute the PDF $p_{Z}$ of the product random
variable $Z=XY$, where $X\sim f(x;3,1)$ is the Laplace distribution
\eqref{eq:Laplace distribution with parameters} and $Y\sim g(x;2,3)$
is the Gumbel distribution \eqref{eq:Gumbel Distribution with parameters}.
Both distributions are approximated as Gaussian mixtures as described
in Section~\ref{sub:Representing-PDFs-in-GMRA}. In computing $p_{Z}$
we ordered the parameters of Gaussians to reduce the total number
of coefficients as explained in Remark~\ref{Ordering remark}. The
Laplace distribution was approximated by a Gaussian mixture with $120$
terms and the Gumbel distribution with 300 terms, see Section~\ref{sub:Representing-PDFs-in-GMRA}
for details. We display the results in Figure~\ref{fig:The-Laplace-and-the-Gumbel-figure-1}
and Figure~\ref{fig:The-Laplace-and-the-Gumbel-figure-2}. The accuracy
of the result is $\approx10^{-6}$ as indicated by computing moments
of $p_{Z}$.

\begin{figure}[H]
\begin{centering}
\includegraphics[scale=0.3]{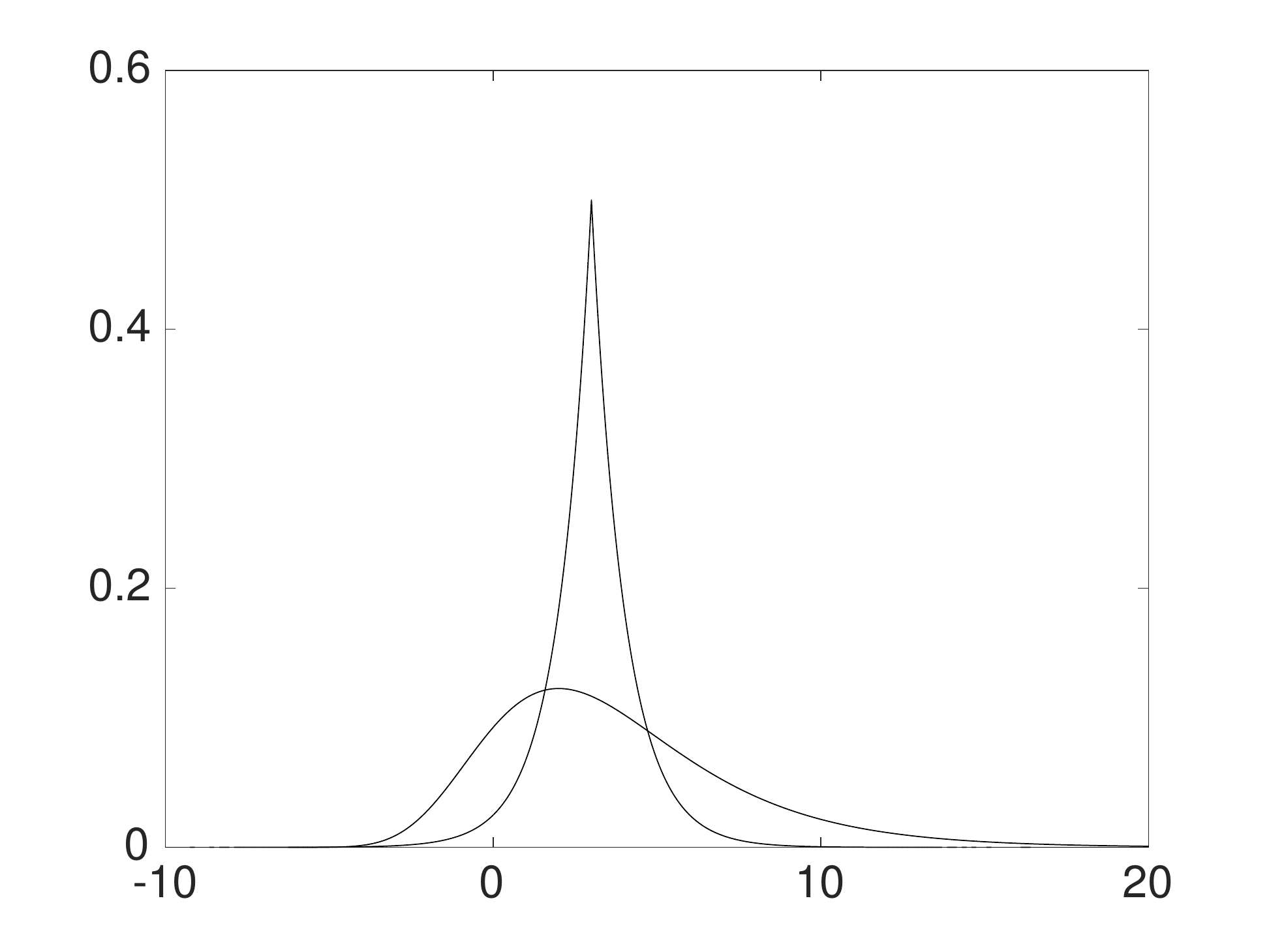}\includegraphics[scale=0.3]{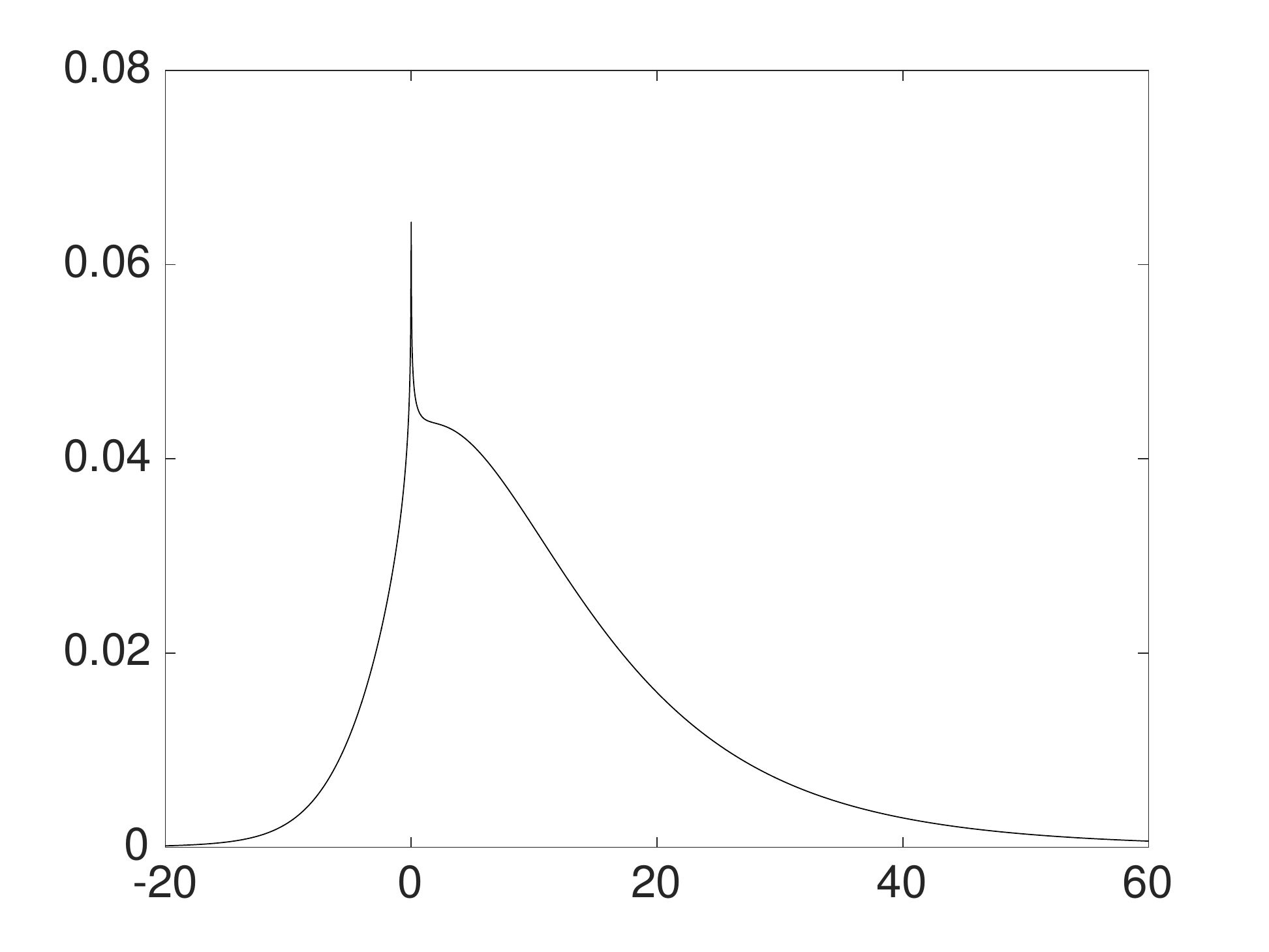}
\par\end{centering}

\centering{}\caption{\label{fig:The-Laplace-and-the-Gumbel-figure-1}The PDFs of the random
variables $X$ and $Y$ in Example~\ref{sub:The-Laplace-and-Gumbel}
(left) and computed product PDF $p_{Z}$ (right).}
\end{figure}

\begin{figure}[h]
\begin{centering}
\includegraphics[scale=0.5]{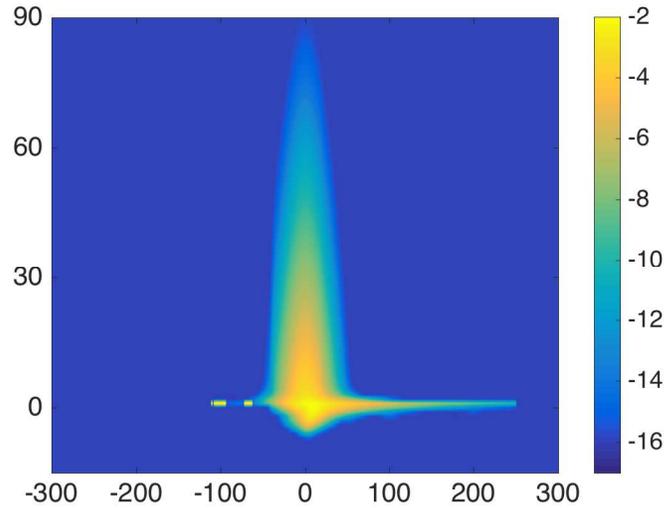}
\par\end{centering}

\centering{}\caption{\label{fig:The-Laplace-and-the-Gumbel-figure-2}Logarithm (base 10)
of GMRA coefficients of the PDF $p_{Z}$ in Example~\ref{sub:The-Laplace-and-Gumbel}.}
\end{figure}

\selectlanguage{american}%

\section{\label{sec:Conclusions-and-further}Conclusions and further work }

GMRA allows us to develop a numerical calculus of PDFs since we can
now compute sums and products of independent random variables (as
well as quotients,\textit{ }since the integral for the quotient of
two independent random variables can be evaluated in a manner similar
to that of the product). Indeed, if the initial PDFs are expressed
in the GMRA, the resulting PDF is also expressed in the GMRA, i.e.
as a multiresolution Gaussian mixture. Similar results hold for CDFs
of random variables. We expect that generalizations of the GMRA to
two and three dimensions can be accomplished in a straightforward
manner, perhaps supplemented by employing the singular value decomposition
(SVD) to simplify the representation of coefficients on each scale.
We plan to address such generalizations within relevant applications.
A practical generalization to dimensions higher than three is more
challenging and we plan to work towards it.

\section{Appendix~A\label{sec:AppendixA}}

The Algorithm~\ref{alg:Adaptive-integrator} describes an adaptive
integrator for computing (with absolute accuracy $\epsilon$) the
integral 
\[
I=\int_{a}^{b}f\left(x\right)dx,
\]
where $f$ can have integrable singularities within $\left[a,b\right]$.
Let $\left\{ x_{m}\right\} _{m=1}^{M}$ and$\left\{ w_{m}\right\} _{m=1}^{M}$
be nodes and weights of the Gauss-Legendre quadrature defined on the
interval $\left[-1,1\right]$. We use this quadrature to evaluate
the sum
\[
s=\sum_{m=1}^{M}\tilde{w}_{m}f(\tilde{x}_{m}),
\]
where nodes and weights are adjusted to an interval $\left[a_{0},b_{0}\right]$
as follows, 
\[
\tilde{x}_{m}=\frac{b_{0}-a_{0}}{2}x_{m}+\frac{b_{0}+a_{0}}{2},\,\,\,\,\tilde{w}_{m}=w_{m}\frac{b_{0}-a_{0}}{2}.
\]
We denote this sum as $s=\mbox{\textbf{quadrature\_sum}}\left(a_{0},b_{0},f\right)$.
The computed value of the integral, $I$, is returned by Algorithm~\ref{alg:Adaptive-integrator}.
\begin{algorithm}[h]
Input: interval $\left[a,b\right]$, function $f$, the target accuracy
$\epsilon$, and a fixed number $M$ of quadrature nodes and weights.

Initialization:
\begin{eqnarray*}
 &  & \mbox{\textbf{left\_p}}\left(1\right)=a\\
 &  & \mbox{\textbf{right\_p}}\left(1\right)=b\\
 &  & s\left(1\right)=\mbox{\textbf{quadrature\_sum}}\left(a,b,f\right)\\
 &  & j=1\\
 &  & I=0
\end{eqnarray*}
Main loop: ($N_{iter}^{\max}$ is large enough to accommodate any
reasonable recursion depth). 
\begin{eqnarray*}
 & \textbf{do}\,\,i=1,N_{iter}^{\max}\\
 &  & c=\frac{1}{2}\left(\mbox{\textbf{left\_p}}\left(j\right)+\mbox{\textbf{right\_p}}\left(j\right)\right)\\
 &  & s_{1}=\mbox{\textbf{quadrature\_sum}}\left(\mbox{\textbf{left\_p}}\left(j\right),c,f\right)\\
 &  & s_{2}=\mbox{\textbf{quadrature\_sum}}\left(\mbox{c,\textbf{right\_p}}\left(j\right),f\right)\\
 &  & \mbox{\textbf{if}}\left|s_{1}+s_{2}-s\left(j\right)\right|>\epsilon\,\,\,\mbox{\textbf{then}}\\
 &  & \,\,\,\,\,\mbox{\textbf{left\_p}}\left(j+1\right)=\mbox{\textbf{left\_p}}\left(j\right)\\
 &  & \,\,\,\,\,\mbox{\textbf{right\_p}}\left(j+1\right)=\frac{1}{2}\left(\mbox{\textbf{left\_p}}\left(j\right)+\mbox{\textbf{right\_p}}\left(j\right)\right)\\
 &  & \,\,\,\,\,s\left(j+1\right)=s_{1}\\
 &  & \,\,\,\,\,\mbox{\textbf{left\_p}}\left(j\right)=\frac{1}{2}\left(\mbox{\textbf{left\_p}}\left(j\right)+\mbox{\textbf{right\_p}}\left(j\right)\right)\\
 &  & \,\,\,\,\,s\left(j\right)=s_{2}\\
 &  & \,\,\,\,\,j=j+1\\
 &  & \mbox{\textbf{else}}\\
 &  & \,\,\,\,\,I=I+s_{1}+s_{2}\\
 &  & \,\,\,\,\,j=j-1\\
 & \mbox{} & \,\,\,\,\,\textbf{If}\,\,\,j=0\,\,\,\mbox{\textbf{return} (the integral has been evaluated)}\\
 &  & \mbox{\textbf{endif}}\\
 & \mbox{\textbf{enddo}}
\end{eqnarray*}

\caption{\label{alg:Adaptive-integrator}Adaptive integrator }
\end{algorithm}

\section{\label{sec:Appendix B}Appendix B}

Computing the PDF of the product of independent random variables does
not require all of the features of MRA since we only compute projections
on the scaling functions at appropriate scales. In this appendix we
briefly present some of the usual MRA constructs. We assume that the
reader is familiar with the basic concepts of orthogonal and bi-orthogonal
wavelet bases, see e.g.~\cite{DAUBEC:1992}. We note that the approximate
GMRA is in many respects similar to MRA based on high order splines,
see e.g.~\cite{CHUI:1992}. However, there are several special properties
of GMRA that are not shared by spline based MRA. For example, as shown
in Theorem~\ref{thm:The-Gaussian-on arbitrary scale}, a Gaussian
with an arbitrary exponent and an arbitrary shift can be represented
at an appropriate subspace which does not depend on the shift. Another
advantage of using GMRA is that it becomes possible to apply operators
to functions in a semi-analytic fashion which is important in a number
of applications we plan to address separately. Finally, we would like
to emphasize that the fact that the GMRA is approximate is not an
obstacle in practice since we can adjust the accuracy of the GMRA
as needed.

\subsection{Quadrature Mirror filters for GMRA}

We start by considering the ratio $\widehat{\phi}\left(2p\right)/\widehat{\phi}\left(p\right)$,
where

\begin{equation}
\widehat{\phi}\left(p\right)=\int_{\mathbb{R}}\phi(x)e^{-2\pi ixp}dx=e^{-\frac{\pi^{2}}{\alpha}p^{2}},\label{eq:Fourier_scaling_functions}
\end{equation}
is the Fourier transform of the approximate scaling function in \eqref{eq:Gaussian multiresolution basis}.
We have 
\begin{equation}
\frac{\widehat{\phi}\left(2p\right)}{\widehat{\phi}\left(p\right)}=e^{-\frac{3\pi^{2}}{\alpha}p^{2}}\label{eq:ratio gaussian}
\end{equation}
and note that for an exact MRA this ratio should be a periodic function.
This leads us to define the filter $m_{0}$ for GMRA as a periodization
of \eqref{eq:ratio gaussian},
\begin{equation}
m_{0}\left(p\right)=\sum_{n\in\mathbb{Z}}e^{-\frac{3\pi^{2}}{\alpha}(p-n)^{2}}=\sqrt{\frac{\alpha}{3\pi}}\vartheta_{3}\left(\pi p,e^{-\frac{\alpha}{3}}\right).\label{def_m0}
\end{equation}
Using the Poisson summation formula applied to $\vartheta_{3}\left(\pi p,e^{-\frac{\alpha}{3}}\right)$,
we have 
\begin{eqnarray*}
\left|e^{-\frac{3\pi^{2}}{\alpha}p^{2}}-\sqrt{\frac{\alpha}{3\pi}}\vartheta_{3}\left(\pi p,e^{-\frac{\alpha}{3}}\right)\right| & = & \left|e^{-\frac{3\pi^{2}}{\alpha}p^{2}}-\sum_{n\in\mathbb{Z}}e^{-\frac{3\pi^{2}}{\alpha}(p-n)^{2}}\right|\\
 & = & \sum_{n\ne0}e^{-\frac{3\pi^{2}}{\alpha}(p-n)^{2}}.
\end{eqnarray*}
For $\left|p\right|\leq1/2$, we obtain the error estimate
\begin{equation}
\left|e^{-\frac{3\pi^{2}}{\alpha}p^{2}}-\sqrt{\frac{\alpha}{3\pi}}\vartheta_{3}\left(\pi p,e^{-\frac{\alpha}{3}}\right)\right|\le2\sum_{n\ge1}e^{-\frac{3\pi^{2}}{\alpha}\left(n-\frac{1}{2}\right)^{2}}=\vartheta_{2}\left(0,e^{-\frac{3\pi^{2}}{\alpha}}\right),\label{error_estimate_m0}
\end{equation}
where the last equality can be found in \cite[eq. 8.180.3]{GRA-RYZ:2007}.
Since 
\begin{eqnarray*}
\sum_{n\ge1}e^{-\frac{3\pi^{2}}{\alpha}\left(n-\frac{1}{2}\right)^{2}}=\sum_{n\geq0}e^{-\frac{3\pi^{2}}{\alpha}\left(n+\frac{1}{2}\right)^{2}} & \leq & e^{-\frac{3\pi^{2}}{4\alpha}}\sum_{n\geq0}e^{-\frac{3\pi^{2}}{\alpha}n^{2}}\\
 & = & \frac{1}{2}e^{-\frac{3\pi^{2}}{4\alpha}}\left(1+\vartheta_{3}\left(0,e^{-\frac{3\pi^{2}}{\alpha}}\right)\right),
\end{eqnarray*}
Lemma~\ref{lem:Estimate_Theta_at_zero} shows that the error \eqref{error_estimate_m0}
decays exponentially fast as a function of the parameter $\alpha$.
For example, if $\alpha=1/5$, the estimate \eqref{error_estimate_m0}
provides an error bound which is approximately $1.69\times10^{-16}$.
and observe that (for any $\alpha\leq3\pi$) \eqref{ineq_v0-1} implies
\begin{align}
\left|m_{0}\left(0\right)-1\right| & =m_{0}\left(0\right)-1=\sqrt{\frac{\alpha}{3\pi}}\vartheta_{3}\left(0,e^{-\frac{\alpha}{3}}\right)-1\nonumber \\
 & =\vartheta_{3}\left(0,e^{-\frac{3\pi^{2}}{\alpha}}\right)-1\leq c_{\pi}\thinspace e^{-\frac{3\pi^{2}}{\alpha}},\label{estimate_m0_at_0}
\end{align}
where $c_{\pi}\approx2.002$, see \eqref{constant_c_pi}. Therefore,
for the values of $\alpha$ of interest, $m_{0}\left(0\right)$ is
extremely close to one. For example, for $\alpha=1/5$, \eqref{estimate_m0_at_0}
yields
\[
\left|m_{0}\left(0\right)-1\right|\leq1.015\times10^{-64}.
\]

\subsubsection{Quadrature Mirror filters for orthogonal approximate GMRA}

Next, we compute

\[
\sum_{n\in\mathbb{Z}}\left|\widehat{\phi}\left(p+n\right)\right|^{2}=\sum_{n\in\mathbb{Z}}e^{-\frac{2\pi^{2}}{\alpha}\left(p+n\right)^{2}}=\sqrt{\frac{\alpha}{2\pi}}\sum_{n\in\mathbb{Z}}e^{-\frac{\alpha}{2}n^{2}}e^{2\pi inp}=\sqrt{\frac{\alpha}{2\pi}}\vartheta_{3}\left(\pi p,e^{-\frac{\alpha}{2}}\right),
\]
to define the Fourier transform of the approximate, orthogonal Gaussian-based
scaling function as 
\begin{equation}
\widehat{\varphi}\left(p\right)=\frac{\widehat{\phi}\left(p\right)}{\sqrt{\sqrt{\frac{\alpha}{2\pi}}\vartheta_{3}\left(\pi p,e^{-\frac{\alpha}{2}}\right)}}.\label{eq:varphi GMRA}
\end{equation}
The corresponding approximate quadrature mirror filter (QMF), determined
by the identity $\widehat{\varphi}\left(2p\right)=M_{a}\left(p\right)\widehat{\varphi}\left(p\right)$,
is then given by 
\begin{equation}
M_{a}\left(p\right)=m_{0}\left(p\right)\left(\frac{\vartheta_{3}\left(\pi p,e^{-\frac{\alpha}{2}}\right)}{\vartheta_{3}\left(2\pi p,e^{-\frac{\alpha}{2}}\right)}\right)^{1/2}.\label{def_M_a}
\end{equation}
We note that $M_{a}\left(0\right)=m_{0}\left(0\right)$ so that, for
$\alpha=1/5$, $\left|M_{a}\left(0\right)-1\right|\leq1.015\times10^{-64}$
as well. The real valued filter $M_{a}$ satisfies an approximate
QMF equation, 
\begin{equation}
M_{a}^{2}\left(p\right)+M_{a}^{2}\left(p+\frac{1}{2}\right)\approx1.\label{eq:QMF_Ma}
\end{equation}
For $\alpha=1/5$, we numerically verify that
\[
\left|M_{a}^{2}\left(p\right)+M_{a}^{2}\left(p+\frac{1}{2}\right)-1\right|<0.11\times10^{-20}.
\]

\subsubsection{Quadrature Mirror filters for biorthogonal approximate GMRA}

In order to obtain the filter that is dual to $m_{0}$, we rewrite
\eqref{eq:QMF_Ma} as 
\[
M_{a}^{2}\left(p\right)+M_{a}^{2}\left(p+\frac{1}{2}\right)=m_{0}\left(p\right)M_{00}\left(p\right)+m_{0}\left(p+\frac{1}{2}\right)M_{00}\left(p+\frac{1}{2}\right)\approx1,
\]
so that the dual filter $M_{00}$ is given by 
\begin{equation}
M_{00}\left(p\right)=\frac{m_{0}\left(p\right)\vartheta_{3}\left(\pi p,e^{-\frac{\alpha}{2}}\right)}{\vartheta_{3}\left(2\pi p,e^{-\frac{\alpha}{2}}\right)}=\sqrt{\frac{\alpha}{3\pi}}\frac{\vartheta_{3}\left(\pi p,e^{-\frac{\alpha}{3}}\right)\vartheta_{3}\left(\pi p,e^{-\frac{\alpha}{2}}\right)}{\vartheta_{3}\left(2\pi p,e^{-\frac{\alpha}{2}}\right)}.\label{eq:filter_M_00}
\end{equation}
We use $M_{00}$ to obtain a formula for the projection of coefficients
onto a coarser scale. It is sufficient to consider a projection from
zero scale into the next coarser scale. Therefore, starting with $f$
of the form 
\[
f\left(x\right)=\sum_{k\in\mathbb{Z}}f_{k}\phi\left(x-k\right),
\]
so that
\[
\widehat{f}\left(p\right)=\left(\sum_{k}f_{k}e^{-2\pi ikp}\right)\widehat{\phi}\left(p\right),
\]
the coefficients of $f$ in the next coarser scale are computed via
(see e.g. \cite[Section 8.3.1]{DAUBEC:1992}), 
\begin{eqnarray}
g_{n} & = & \frac{1}{\sqrt{2}}\int_{-1/2}^{1/2}\left[\left(\sum_{k}f_{k}e^{-2\pi ikp/2}\right)M_{00}\left(\frac{p}{2}\right)+\right.\label{eq:coeff_g_n}\\
 &  & \left.\left(\sum_{k}f_{k}e^{-2\pi ik(\frac{p}{2}+\frac{1}{2})}\right)M_{00}\left(\frac{p}{2}+\frac{1}{2}\right)\right]e^{2\pi inp}dp.\nonumber 
\end{eqnarray}
We would like to point out a numerical problem associated with using
the filter $M_{00}$. The plot of $M_{00}$ for $\alpha=0.2$ in Figure~\ref{fig:Filter--for M00}
shows that one can expect a loss of significant digits due to its
large dynamic range. 
\begin{figure}
\includegraphics[scale=0.3]{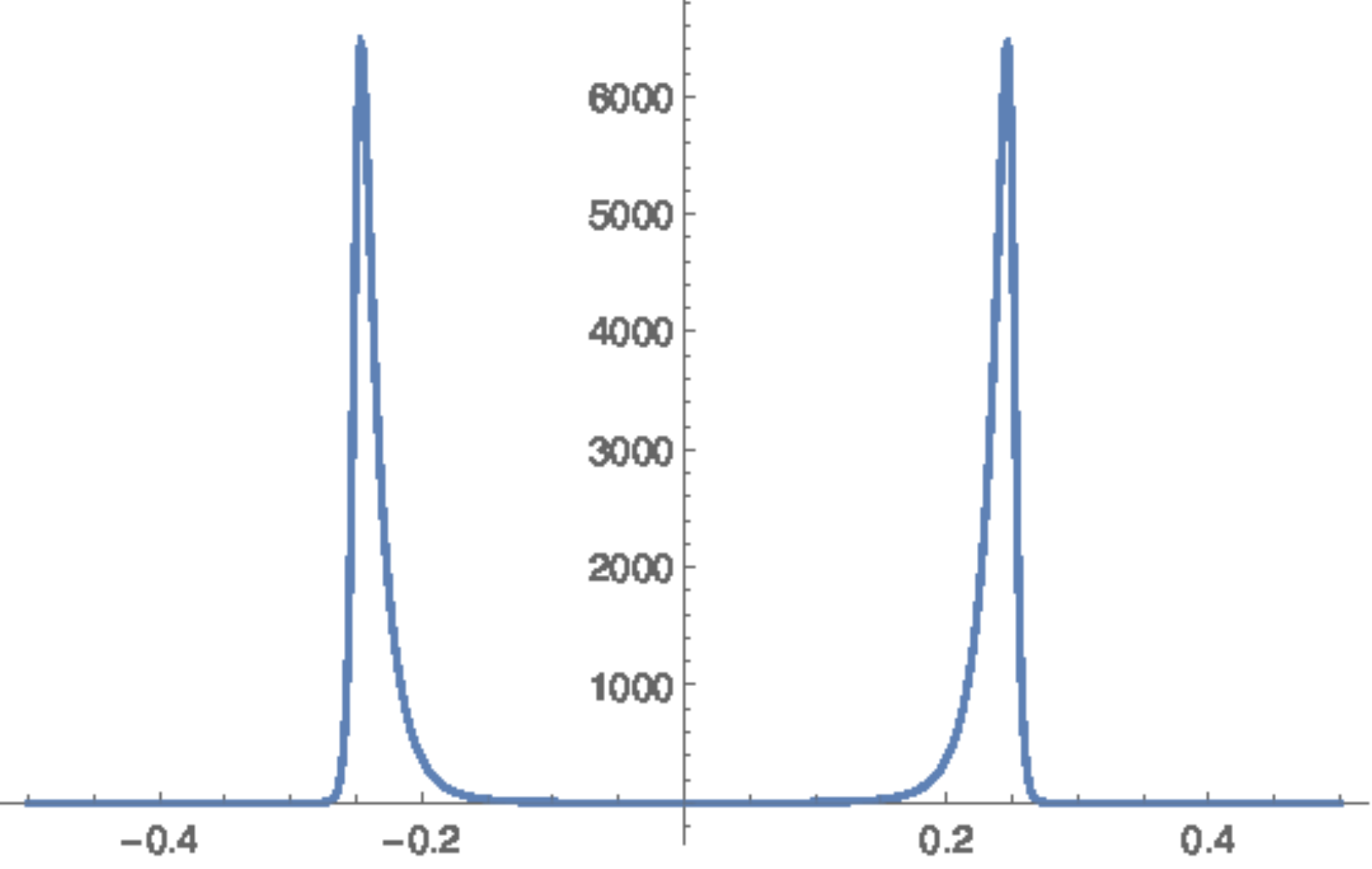}\includegraphics[scale=0.3]{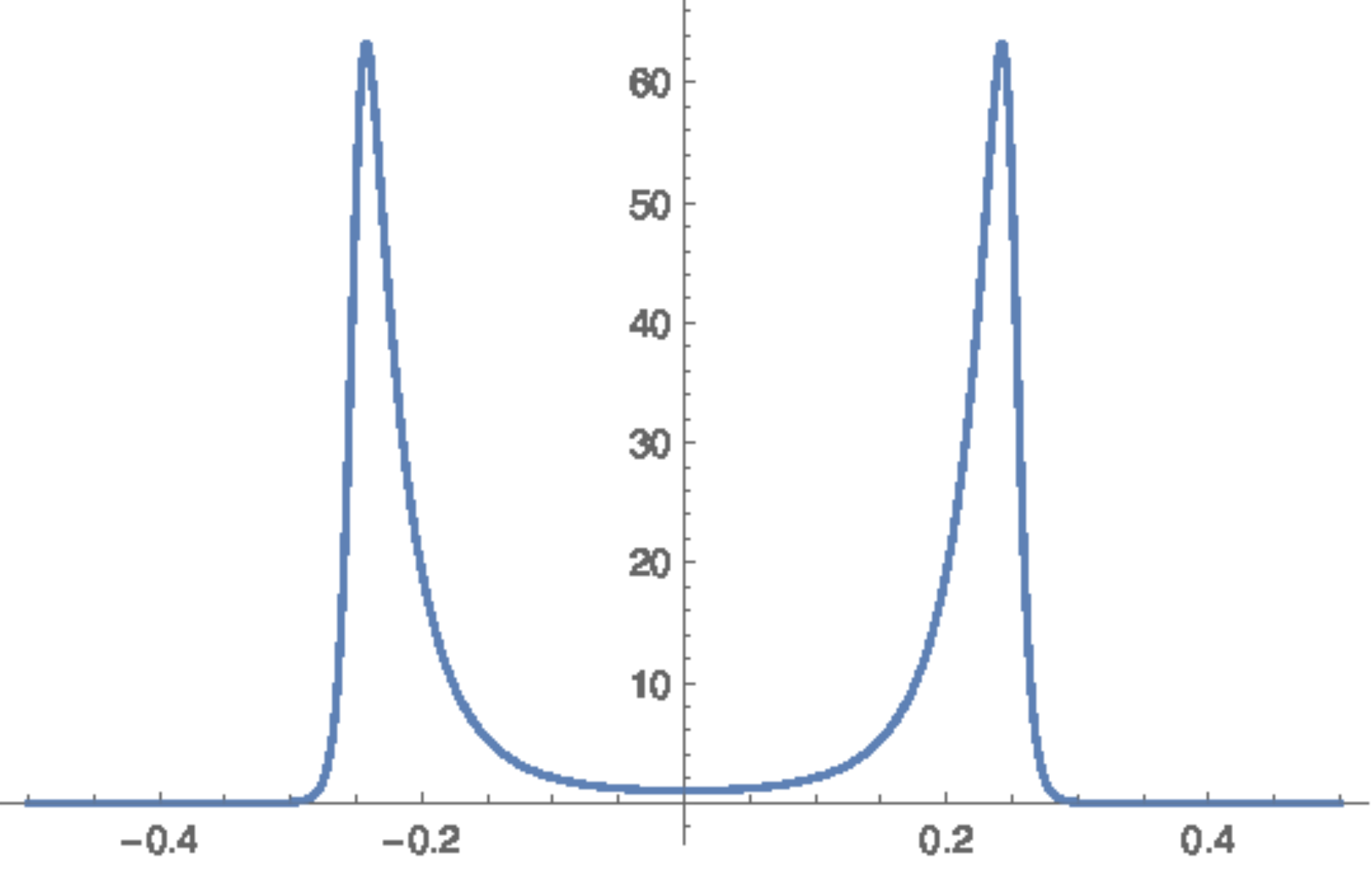}\caption{\label{fig:Filter--for M00}Filter $M_{00}\left(p\right)$ in \eqref{eq:filter_M_00}
for $\alpha=0.2$ (left) and $\alpha=0.4$ (right). }
\end{figure}
 To avoid the possible loss of significant digits we have several
options. First, as is done in this paper, we can avoid using $M_{00}$
altogether. The second option is to accept a lower (e.g. single precision)
accuracy in computations with GMRA. Finally, one can look for an alternative
(nonlinear) algorithm for computing coefficients $g_{n}$ in \eqref{eq:coeff_g_n}.
To illustrate the second option, we choose $\alpha=0.4$ and observe
the relatively small dynamic range of $M_{00}$ for this value of
$\alpha$ in Figure~\ref{fig:Filter--for M00}. Using $\alpha=0.4$
in computing the PDFs results in roughly single precision accuracy
as illustrated in Figures~\ref{fig:example0101-2} and\ref{fig:example010101-2}.

\subsection{Constructing an exact MRA}

An alternative approach motivated by the construction of GMRA is to
develop an exact MRA such that the resulting scaling function is closely
approximated by a single Gaussian. Although we do not use this approach
in this paper, we briefly indicate how to attain such construction.
We can demonstrate that the nonnegative filter

\begin{equation}
M_{0}\left(p\right)=\left(\frac{\vartheta_{3}\left(\pi p,e^{-\frac{\alpha}{8}}\right)}{2\vartheta_{3}\left(2\pi p,e^{-\frac{\alpha}{2}}\right)}+\left(1-\frac{\vartheta_{3}\left(0,e^{-\frac{\alpha}{8}}\right)}{2\vartheta_{3}\left(0,e^{-\frac{\alpha}{2}}\right)}\right)\cos2\pi p\right)^{1/2},\label{eq:M_0 definition}
\end{equation}
is an excellent approximation of the filter $M_{a}$ in \eqref{def_M_a}.
In fact, for $\alpha=0.2$, we verify numerically that
\begin{equation}
\left|M_{a}\left(p\right)-M_{0}\left(p\right)\right|<1.05\times10^{-21},\,\,\,p\in\left[-\frac{1}{2},\frac{1}{2}\right].\label{diff_M_a_and_M_0}
\end{equation}
In the next theorem we show that the filter $M_{0}$ has the necessary
properties to generate an exact orthogonal MRA.
\begin{thm}
The filter $M_{0}$ in \eqref{eq:M_0 definition} satisfies the QMF
condition,
\[
M_{0}^{2}\left(p\right)+M_{0}^{2}\left(p+\frac{1}{2}\right)=1,
\]
and it is an even, positive function in $\left(-\frac{1}{2},\frac{1}{2}\right)$,
monotonically decreasing in $\left[0,\frac{1}{2}\right]$ with $M_{0}\left(0\right)=1$. \end{thm}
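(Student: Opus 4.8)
The plan is to work throughout with the square $M_0^2(p)=A(p)+B\cos 2\pi p$, where $A(p)=\vartheta_3(\pi p,e^{-\alpha/8})/\bigl(2\vartheta_3(2\pi p,e^{-\alpha/2})\bigr)$ and $B=1-\vartheta_3(0,e^{-\alpha/8})/\bigl(2\vartheta_3(0,e^{-\alpha/2})\bigr)$, and to extract the assertions (the QMF identity, $M_0(0)=1$, evenness, positivity on the open interval, and monotonicity) from this single form. The QMF identity, the normalization, and evenness are elementary algebraic consequences of the theta series; positivity and, above all, monotonicity are the analytic core, with monotonicity being the main obstacle.

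For the QMF relation, first observe that replacing $p$ by $p+\tfrac12$ sends $\cos 2\pi p$ to $-\cos 2\pi p$, so the two $B$-terms cancel and it remains to prove $A(p)+A(p+\tfrac12)=1$. The denominator is unchanged, since $\vartheta_3(2\pi p+\pi,e^{-\alpha/2})=\vartheta_3(2\pi p,e^{-\alpha/2})$ by $e^{2in\pi}=1$, so $A$ has a $\tfrac12$-periodic denominator. In the numerator, $\vartheta_3(\pi p+\tfrac\pi2,e^{-\alpha/8})=\sum_n(-1)^n e^{-\alpha n^2/8}e^{2in\pi p}$, and adding this to $\vartheta_3(\pi p,e^{-\alpha/8})$ annihilates the odd modes and doubles the even modes $n=2m$; since $(e^{-\alpha/8})^4=e^{-\alpha/2}$, the surviving sum is exactly $2\vartheta_3(2\pi p,e^{-\alpha/2})$, giving $A(p)+A(p+\tfrac12)=1$. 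The normalization $M_0(0)=1$ holds because $A(0)+B=1$ by the very definition of $B$, and evenness follows since $\vartheta_3(z,q)$ is even in $z$ (by $n\mapsto -n$) and $\cos$ is even, whence $M_0^2$, and therefore its nonnegative square root, is even. For positivity of the ingredients I would use the classical fact that $\vartheta_3(\cdot,q)>0$ on the real line for $q\in(0,1)$ (no real zeros, and positive at $0$), giving $A(p)>0$; and for $B>0$ I would invoke the reciprocal relation \eqref{theta_at_0_using_reciprocal}, which reduces the ratio defining $B$ to $\vartheta_3(0,e^{-8\pi^2/\alpha})/\vartheta_3(0,e^{-2\pi^2/\alpha})<1$, since $\vartheta_3(0,\cdot)$ is strictly increasing.

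The monotonicity on $[0,\tfrac12]$ is the crux. First I would simplify $A$ using the splitting identity $\vartheta_3(z,q)=\vartheta_3(2z,q^4)+\vartheta_2(2z,q^4)$ (separating even and odd indices), which with $z=\pi p$, $q=e^{-\alpha/8}$, and the companion theta function $\vartheta_2$ yields
\[
M_0^2(p)=\tfrac12+\tfrac12\,\frac{\vartheta_2(2\pi p,e^{-\alpha/2})}{\vartheta_3(2\pi p,e^{-\alpha/2})}+B\cos 2\pi p .
\]
Writing $u=2\pi p\in[0,\pi]$ and $r(u)=\vartheta_2(u,e^{-\alpha/2})/\vartheta_3(u,e^{-\alpha/2})$, differentiation gives $\tfrac{d}{dp}M_0^2=\pi r'(u)-2\pi B\sin u$. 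On $(0,\pi)$ the second term is negative because $B>0$ and $\sin u>0$, so it suffices to show $r'(u)<0$. For this I would use the Jacobi bilinear (Wronskian) identity $\vartheta_2'(u)\vartheta_3(u)-\vartheta_2(u)\vartheta_3'(u)=C\,\vartheta_1(u)\vartheta_4(u)$ for a constant $C$, where $\vartheta_1,\vartheta_4$ are the remaining companion Jacobi theta functions; matching the leading $q^{1/4}$ terms of both sides shows $C<0$. Since $\vartheta_1(u)>0$ on $(0,\pi)$ while $\vartheta_4$ and $\vartheta_3$ are positive on the real line, $r'(u)=C\,\vartheta_1(u)\vartheta_4(u)/\vartheta_3^2(u)<0$ there. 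Hence both terms of the derivative are negative on $(0,\tfrac12)$, so $M_0^2$ is strictly decreasing, and as $M_0=(M_0^2)^{1/2}$, so is $M_0$.

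Finally, positivity on the open interval follows for free: $M_0^2$ decreases strictly from $M_0^2(0)=1$ to $M_0^2(\tfrac12)=0$ (the latter value forced by the QMF identity together with $M_0^2(0)=1$), so $M_0^2>0$ on $[0,\tfrac12)$, and evenness extends this to $(-\tfrac12,\tfrac12)$; in particular $M_0^2\ge 0$ throughout, so the square root is real. The one genuinely delicate point, and the step I expect to require the most care, is establishing the Wronskian identity with the correct sign of $C$; everything else is bookkeeping with theta series together with the standard positivity and monotonicity of $\vartheta_3(0,\cdot)$.
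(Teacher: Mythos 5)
Your proposal is correct and follows the same skeleton as the paper's proof: the QMF identity via the half\mbox{-}period relation $\vartheta_{3}\left(\pi p,q\right)+\vartheta_{3}\left(\pi p+\frac{\pi}{2},q\right)=2\vartheta_{3}\left(2\pi p,q^{4}\right)$ together with the sign flip of $\cos2\pi p$; evenness and $M_{0}\left(0\right)=1$ read off the definition; $B>0$ (the paper's $\eta$) via the reciprocal relation \eqref{theta_at_0_using_reciprocal}; and monotonicity of $M_{0}^{2}$ reduced to monotonicity of a theta quotient, with positivity then falling out of the endpoint values. The one place you genuinely diverge is the crux. The paper writes $g\left(p\right)=u/\left(u+v\right)$ with $u=\vartheta_{3}\left(\pi p,e^{-\alpha/8}\right)$ and $v=\vartheta_{3}\left(\pi\left(p+\frac{1}{2}\right),e^{-\alpha/8}\right)$ --- algebraically identical to your $\frac{1}{2}+\frac{1}{2}\vartheta_{2}/\vartheta_{3}$ after the even/odd splitting, since $u+v=2\vartheta_{3}\left(2\pi p,e^{-\alpha/2}\right)$ and $u-v=2\vartheta_{2}\left(2\pi p,e^{-\alpha/2}\right)$ --- and then proves $g'<0$ by comparing logarithmic derivatives, $\left(\ln u\right)'<\left(\ln v\right)'$, using a representation of $\frac{d}{dp}\ln\vartheta_{3}$ cited from \cite{OLVER:2010} whose sign is $-\sin2\pi p$ times a positive factor. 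You instead invoke the quotient\mbox{-}derivative (Wronskian) identity $\vartheta_{2}'\vartheta_{3}-\vartheta_{2}\vartheta_{3}'=C\,\vartheta_{1}\vartheta_{4}$. Both routes work: the paper's needs only facts about $\vartheta_{3}$ itself, while yours imports $\vartheta_{1},\vartheta_{2},\vartheta_{4}$ but rests on a classical identity (the theta\mbox{-}function form of $\frac{d}{du}\left(\mathrm{cn}/\mathrm{dn}\right)=-k'^{2}\,\mathrm{sn}/\mathrm{dn}^{2}$) that is arguably easier to locate and check than the product formula the paper quotes.

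The one soft spot is your determination of the sign of $C$. The ``constant'' in the Wronskian identity depends on the nome $q$; it equals $-\vartheta_{4}^{2}\left(0,q\right)$. Matching the leading $q^{1/4}$ coefficients of the two sides therefore only yields $C=-1+O\left(q\right)$, i.e., the sign for small $q$, not for every $q\in\left(0,1\right)$. To close this, either quote the identity with its exact constant and note that $\vartheta_{4}\left(0,q\right)=\prod_{n\geq1}\left(1-q^{2n}\right)\left(1-q^{2n-1}\right)^{2}>0$, or derive the identity (constant included) from the elliptic\mbox{-}function computation just mentioned. This is a citation\mbox{-}level fix, not a structural flaw; everything else in your argument checks out, including the reduction $\frac{d}{dp}M_{0}^{2}=\pi r'\left(u\right)-2\pi B\sin u$ and the deduction of positivity on $\left(-\frac{1}{2},\frac{1}{2}\right)$ from strict decrease between $M_{0}^{2}\left(0\right)=1$ and $M_{0}^{2}\left(\frac{1}{2}\right)=0$.
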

\begin{proof}
Since $\vartheta_{3}$ satisfies the well-known property 
\begin{align}
\vartheta_{3}\left(\pi p,e^{-\gamma}\right)+\vartheta_{3}\left(\pi p+\frac{\pi}{2},e^{-\gamma}\right) & =\sum_{n\in\mathbb{Z}}e^{-\gamma n^{2}}e^{2\pi ipn}+\sum_{n\in\mathbb{Z}}\left(-1\right)^{n}e^{-\gamma n^{2}}e^{2\pi ipn},\label{half_period_identity}\\
 & =2\sum_{n\in\mathbb{Z}}e^{-\gamma\left(2n\right)^{2}}e^{2\pi ip\left(2n\right)}\nonumber \\
 & =2\vartheta_{3}\left(2\pi p,e^{-4\gamma}\right),\nonumber 
\end{align}
and $\cos2\pi\left(p+\frac{1}{2}\right)=-\cos2\pi p$, we obtain 
\begin{align}
M_{0}^{2}\left(p\right)+M_{0}^{2}\left(p+\frac{1}{2}\right) & =\frac{\vartheta_{3}\left(\pi p,e^{-\frac{\alpha}{8}}\right)+\vartheta_{3}\left(\pi p+\frac{\pi}{2},e^{-\frac{\alpha}{8}}\right)}{2\vartheta_{3}\left(2\pi p,e^{-\frac{\alpha}{2}}\right)}\label{QMF_equation_for_M0}\\
 & =1.\nonumber 
\end{align}
By construction, we have $M_{0}\left(0\right)=1$ and, since $\vartheta_{3}\left(\pi p,e^{-\gamma}\right)$
is an even function, $M_{0}$ is even. Using \eqref{theta_at_0_using_reciprocal}
and observing that $\vartheta_{3}\left(0,e^{-\gamma}\right)$ is strictly
decreasing for $\gamma>0$, we have 
\[
\frac{\vartheta_{3}\left(0,e^{-\frac{\alpha}{8}}\right)}{2\vartheta_{3}\left(0,e^{-\frac{\alpha}{2}}\right)}=\frac{\vartheta_{3}\left(0,e^{-\frac{8\pi^{2}}{\alpha}}\right)}{\vartheta_{3}\left(0,e^{-\frac{2\pi^{2}}{\alpha}}\right)}<1.
\]
Hence, denoting by 
\[
\eta=1-\frac{\vartheta_{3}\left(0,e^{-\frac{\alpha}{8}}\right)}{2\vartheta_{3}\left(0,e^{-\frac{\alpha}{2}}\right)},
\]
the constant in \eqref{eq:M_0 definition}, we know that $\eta\in\left(0,1\right)$.
For $p\in\left[0,\frac{1}{2}\right)$, we denote by $f$ the function
\[
f\left(p\right)=M_{0}^{2}\left(p\right)=\frac{\vartheta_{3}\left(\pi p,e^{-\frac{\alpha}{8}}\right)}{2\vartheta_{3}\left(2\pi p,e^{-\frac{\alpha}{2}}\right)}+\eta\cos\left(2\pi p\right)=g\left(p\right)+\eta\cos\left(2\pi p\right),
\]
and notice that, due to \eqref{QMF_equation_for_M0}, we have $f\left(0\right)=1$
and $f\left(\frac{1}{2}\right)=0$. Since $\eta>0$ and $\cos\left(2\pi p\right)$
is strictly decreasing on $\left(0,1/2\right)$, to show that $f$
is strictly decreasing in this interval, it is enough to show that
$g$ has the same property. By \eqref{half_period_identity}, we have
\[
g\left(p\right)=\frac{\vartheta_{3}\left(\pi p,e^{-\frac{\alpha}{8}}\right)}{\vartheta_{3}\left(\pi p,e^{-\frac{\alpha}{8}}\right)+\vartheta_{3}\left(\pi\left(p+\frac{1}{2}\right),e^{-\frac{\alpha}{8}}\right)}
\]
and, taking derivatives, $g'\left(p\right)<0$ is equivalent to the
inequality
\begin{equation}
\frac{d}{dp}\ln\vartheta_{3}\left(\pi p,e^{-\frac{\alpha}{8}}\right)<\frac{d}{dp}\ln\vartheta_{3}\left(\pi\left(p+\frac{1}{2}\right),e^{-\frac{\alpha}{8}}\right).\label{ineq_log_deriv_theta}
\end{equation}
From \cite[20.5.12]{OLVER:2010}, we have, for any $\gamma>0$,
\[
\frac{d}{dp}\ln\vartheta_{3}\left(\pi p,e^{-\gamma}\right)=-\frac{4}{\pi}\sin2\pi p\prod_{n\geq1}\frac{1}{\left|1+e^{-\gamma\left(2n-1\right)}e^{2\pi pi}\right|}
\]
 and, hence,
\[
\frac{d}{dp}\ln\vartheta_{3}\left(\pi\left(p+\frac{1}{2}\right),e^{-\frac{\alpha}{8}}\right)=\frac{4}{\pi}\sin2\pi p\prod_{n\geq1}\frac{1}{\left|1-e^{-\frac{\alpha}{8}\left(2n-1\right)}e^{2\pi pi}\right|},
\]
which implies \eqref{ineq_log_deriv_theta} and, thus, that $f$ is
strictly decreasing on $\left(0,1/2\right)$. Therefore, $f$ is also
positive in that interval. It follows that $\sqrt{f}=M_{0}$ is strictly
decreasing on $\left(0,1/2\right)$.
\end{proof}
Since $M_{0}$ has no zeros in $\left[-1/3,1/3\right]$, using Corollary
6.3.2 in \cite[p. 186]{DAUBEC:1992}, we know that the QMF $M_{0}$
generates an orthogonal MRA. The Fourier transform of the resulting
scaling function $\Phi$ can now be obtained as 
\[
\widehat{\Phi}\left(p\right)=\prod_{n\geq1}M_{0}\left(\frac{p}{2^{n}}\right).
\]
Due to \eqref{diff_M_a_and_M_0}, if we use a finite number of scales,
the exact MRA and approximate GMRA generated by the function $\widehat{\varphi}\left(p\right)$
in \eqref{eq:varphi GMRA} are exchangeable for an appropriately selected
parameter $\alpha$ tuned to the user-selected accuracy.

\subsubsection{An alternative approach to constructing exact MRA}

We have strong numerical evidence but no proofs that the following
construction also leads to an exact MRA. We modify $m_{0}$ in \eqref{def_m0}
to define
\[
M_{exact}\left(p\right)=\frac{m_{0}\left(p\right)-m_{0}\left(1/2\right)}{m_{0}\left(0\right)-m_{0}\left(1/2\right)}=\frac{\vartheta_{3}\left(\pi p,e^{-\frac{\alpha}{3}}\right)-\vartheta_{3}\left(\frac{\pi}{2},e^{-\frac{\alpha}{3}}\right)}{\vartheta_{3}\left(0,e^{-\frac{\alpha}{3}}\right)-\vartheta_{3}\left(\frac{\pi}{2},e^{-\frac{\alpha}{3}}\right)},
\]
so that $M_{exact}\left(0\right)=1$ and $M_{exact}\left(1/2\right)=0$.
Then the scaling function for the exact (non-orthogonal) MRA would
be given by the infinite product,
\begin{equation}
\widehat{\phi}_{exact}\left(p\right)=\prod_{j=1}^{\infty}M_{exact}\left(\frac{p}{2^{j}}\right),\label{eq:exact scaling function}
\end{equation}
in which case it holds that $\widehat{\phi}_{exact}\left(0\right)=1$
and $\widehat{\phi}_{exact}\left(n\right)=0$ , $n\in\mathbb{Z}/\left\{ 0\right\} $.
We verify numerically that 
\[
\left|\widehat{\phi}_{exact}\left(p\right)-\widehat{\phi}\left(p\right)\right|\le\epsilon,
\]
where $\epsilon$ is of the same order as in \eqref{eq:parameter epsilon}
and $\widehat{\phi}\left(p\right)$ is given in \eqref{eq:Fourier_scaling_functions}.
In Figure~\ref{fig:The difference exact scaling and Gaussian} we
plot the error $\log_{10}\left|\widehat{\phi}_{exact}\left(p\right)-\widehat{\phi}\left(p\right)\right|$
for two choices of the parameter $\alpha=0.25$ and $\alpha=0.4$.
Effectively we force contributions outside the main band to be below
the desired accuracy $\epsilon$, which allows us to use a single
Gaussian as an approximate scaling function. 

\begin{figure}
\begin{centering}
\includegraphics[scale=0.3]{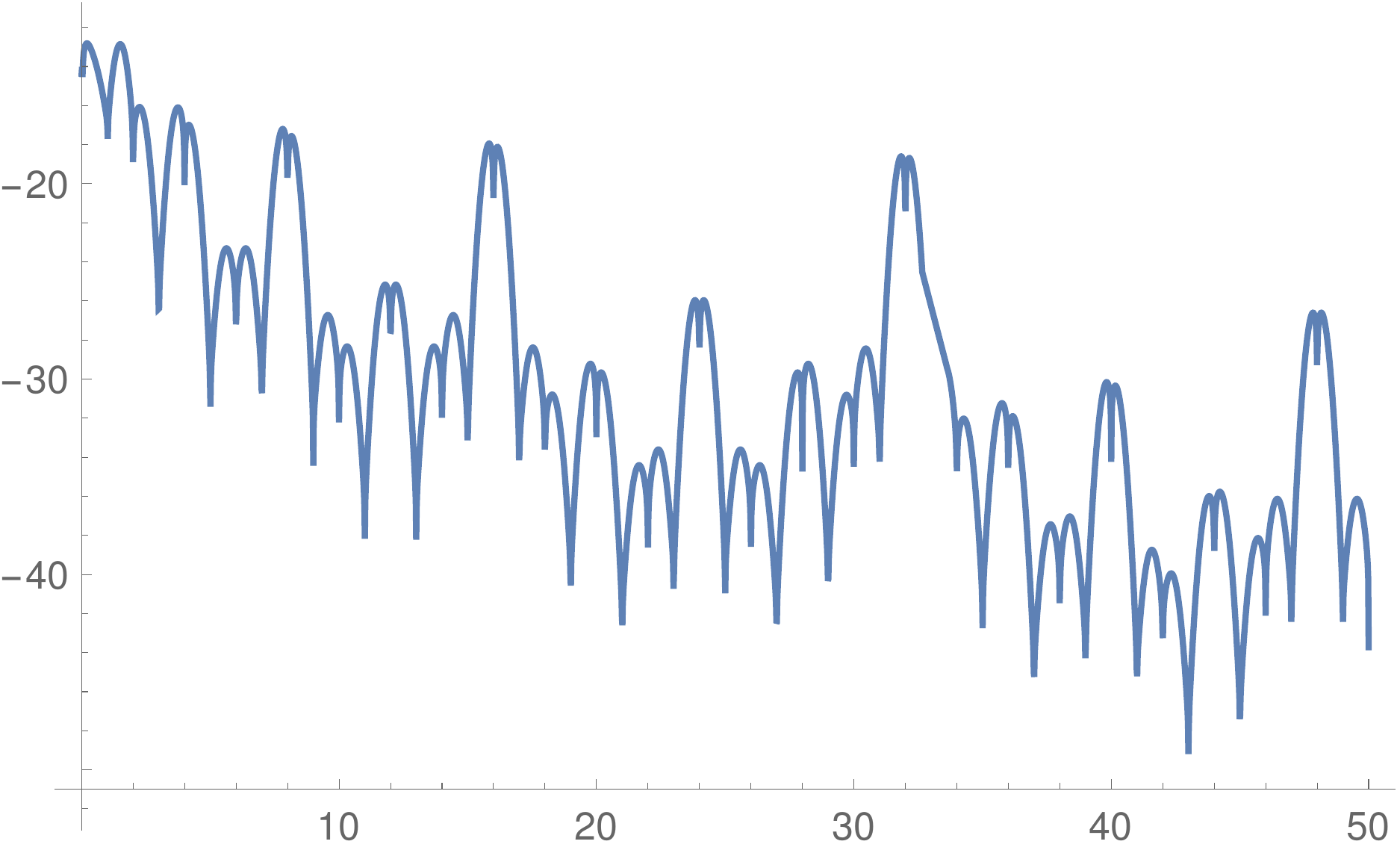}\includegraphics[scale=0.3]{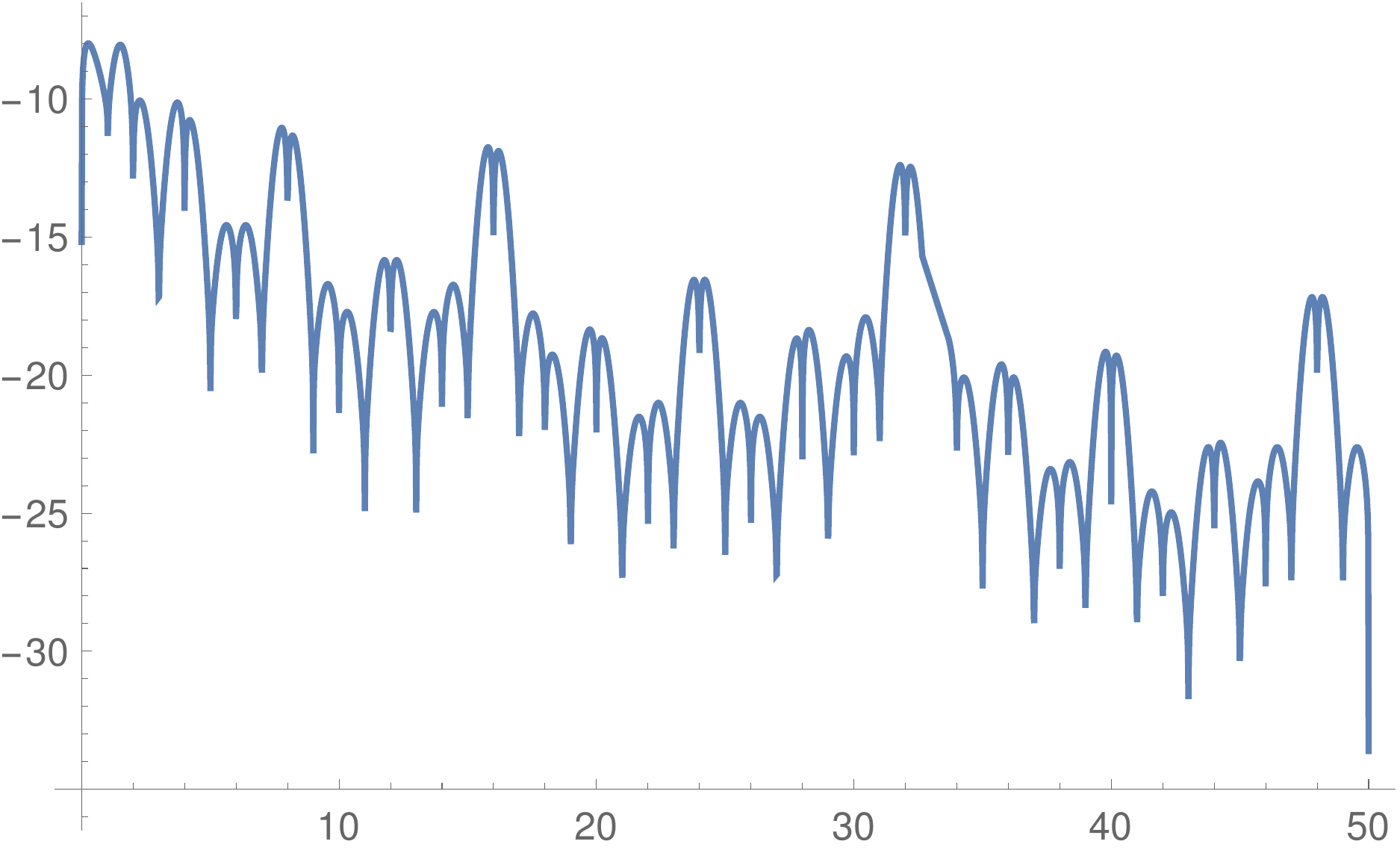}
\par\end{centering}

\caption{The logarithm (base 10) of the difference between the scaling function
defined by the product in \eqref{eq:exact scaling function} and the
Gaussian \eqref{eq:Fourier_scaling_functions} for $\alpha=0.25$
(left) and $\alpha=0.4$ (right) on the interval $\left[0,50\right]$.\label{fig:The difference exact scaling and Gaussian}}
\end{figure}

\begin{rem}
Scaling functions of all MRAs have ``bumps'' in the Fourier domain,
i.e. contributions outside the main band, due to their construction
as an infinite product of periodic functions. As an example, see discussion
in \cite[p. 245]{DAUBEC:1992}. It is particularly easy to see this
phenomenon for the B-splines. The Fourier transform of the non-orthogonal
scaling function forB-splines (of odd degree $m$ ) is given by 
\[
\widehat{\varphi}_{spline}\left(p\right)=\left(\frac{\sin\pi p}{\pi p}\right)^{m+1},
\]
so that the periodic contributions of the numerator outside the main
band are suppressed (but not eliminated) by the denominator. We observe
the same phenomenon in the behavior of $\widehat{\phi}_{exact}$.\selectlanguage{american}%
\end{rem}

\bibliographystyle{plain}

\end{document}